\newtheorem{thm}{Theorem}[section]
\newtheorem{cor}[thm]{Corollary}
\newtheorem{lem}[thm]{Lemma}
\theoremstyle{definition}
\def\1{\hbox{\rm\rlap {1}\hskip .03in{\rm I}}}
\newcommand{\co}{\colon}
\newcommand{\id}{\mathrm{id}}
\newcommand{\cyl}{\mathrm{cyl}}
\newcommand{\cc}{\mathcal{C}}
\newcommand{\cch}{\mathcal{C}_{\mathrm{hom}}}
\newcommand{\bb}{\mathcal{B}}
\newcommand{\Cob}{\mathrm{Cob}}
\newcommand{\Fac}{\mathrm{Fac}}
\newcommand{\ee}{\mathcal{E}}
\newcommand{\uu}{\mathcal{U}}
\newcommand{\zz}{\mathcal{Z}}
\newcommand{\CC}{\mathbb{C}}
                     \newcommand{\RR}{\mathbb{R}}
\newcommand{\Mod}{\mathrm{Mod}}
\newcommand{\iso}{\stackrel{\sim}{\longrightarrow}}
\newcommand{\kk}{\Bbbk}
\newcommand{\kt}{$\Bbbk$\nobreakdash-\hspace{0pt}}
\newcommand{\ti}{\mbox{-}\,}
\newcommand{\R}{\mathbb{R}}
\newcommand{\un}{\mathbb{1}}
\newcommand{\T}{\mathbb{T}}
\newcommand{\Ob}{\mathrm{Ob}}
\newcommand{\Int}{\mathrm{Int}}
\newcommand{\Aut}{\mathrm{Aut}}
\newcommand{\End}{\mathrm{End}}
\newcommand{\Hom}{\mathrm{Hom}}
\newcommand{\tr}{\mathrm{tr}}
\newcommand{\inv}{\mathbb{F}}
\newcommand{\lev}{\mathrm{ev}}
\newcommand{\rev}{\widetilde{\mathrm{ev}}}
\newcommand{\lcoev}{\mathrm{coev}}
\newcommand{\rcoev}{\widetilde{\mathrm{coev}}}
\newcommand{\ldual}[1]{#1^{*}}
\newcommand{\scaledraw}[1]{A}
\newcommand{\scaleraisedraw}[2]{A}
\newcommand{\rsdraw}[3]{\raisebox{-#1\height}{\scalebox{#2}{\includegraphics{#3.eps}}}}
\newcommand{\labela}{\renewcommand{\labelenumi}{{\rm (\alph{enumi})}}}
\newcommand{\labeli}{\renewcommand{\labelenumi}{{\rm (\roman{enumi})}}}
\newcommand{\X}{\mathbf{X}}
\newcommand{\x}{\mathbf{x}}
\newcommand{\cl}{\mathbf{c}}
\begin{document}

\title[On 3-dimensional homotopy quantum field theory  III]{On   3-dimensional homotopy quantum  field theory~III:   comparison of two approaches}
\author[V. Turaev]{Vladimir Turaev}
 \address{%
 Vladimir Turaev\newline
  \indent Department of Mathematics, Indiana University \newline
  \indent Bloomington IN47405 USA \newline
  \indent  and \newline
  \indent IRMA, 7 rue Rene Descartes \newline
  \indent 67084 Strasbourg France}
\author[A. Virelizier]{Alexis Virelizier}
\address{%
 Alexis Virelizier\newline
     \indent Universit\'e de Lille\newline
     \indent Laboratoire Paul Painlev\'e (CNRS, UMR 8524)\newline
     \indent F-59000 Lille  France\newline
     \indent e-mail:   alexis.virelizier@univ-lille.fr}
 \subjclass[2000]{ 57M27, 18D10, 57R56}
\date{\today}

\begin{abstract}
Let $G$ be a discrete group and $\cc$ be an  additive spherical $G$-fusion category. We prove that the state sum 3-dimensional  HQFT derived from $\cc$ is isomorphic to the  surgery 3-dimensional HQFT derived from the $G$-center of~$\cc$.
\end{abstract}
\maketitle

\section{Introduction}\label{sect-Intro}

Homotopy Quantum Field Theories (HQFTs) introduced in \cite{Tu1} generalize   Topological Quantum Field Theories to  manifolds and cobordisms endowed with  homotopy classes of maps   to a fixed target space. We focus here on  3-dimensional HQFTs with   target space   the   (pointed)  Eilenberg-MacLane space $ K(G,1)$ where~$G  $ is a discrete group. Note that the homotopy classes of maps from a manifold $M$ to  $K(G,1)$  bijectively correspond to isomorphism classes of principal $G$-bundles over~$M$. In \cite{TVi2} we defined    spherical $G$-fusion categories over a commutative ring~$\kk$ and  showed that  such   a  category $\cc$   satisfying a non-degeneracy condition  determines       a 3-dimensional HQFT $\vert \cdot \vert_\cc$ over~$\kk$.   The non-degeneracy  condition in question requires the dimension $\dim(\cc_1) \in \kk$ of the neutral component  $\cc_1$ of~$\cc$ to be   invertible in~$\kk$. The construction of $\vert \cdot \vert_\cc$ uses    Turaev-Viro-Barrett-Westbury-type   state sums on skeletons of 3-manifolds.   In~\cite{TVi4}, we  defined $G$-modular categories over~$\kk$ and showed that any   such category $\bb$
endowed with a square root   of  $\dim(\bb_1)$ in~$\kk$  determines   a 3-dimensional HQFT $\tau_\bb$  over~$\kk$.  The construction of $\tau_\bb$ uses Reshetikhin-Turaev-type surgery formulas. The main result of the present paper is an isomorphism   between the   HQFTs $\vert \cdot \vert_\cc$ and $\tau_\bb$ provided $ \bb=\zz_G(\cc)$  is the $G$-center of~$\cc$ and~$\kk$ is an algebraically closed field.

To be more  specific,   consider the cobordism category   $\Cob^G $ whose objects are $G$-surfaces, that is, pointed closed oriented surfaces endowed with homotopy classes of maps to $ K(G,1)$. Morphisms in $\Cob^G$ are represented by $G$-cobordisms defined as   compact oriented 3-dimensional cobordisms  endowed with homotopy classes of maps to $K(G,1) $ (for a precise definition of $\Cob^G $, see Section~\ref{sect-CobG-v0} below).  The category $\Cob^G$ has a natural structure of  a symmetric monoidal category.
For a commutative ring~$\kk$, the category $\Mod_\kk$ of $\kk$-modules and $\kk$-homomorphisms   is a symmetric monoidal category with   monoidal   product $\otimes_\kk$  and unit object $\kk$. A 3-dimensional HQFT   over~$\kk$ is a symmetric strong monoidal functor  $\Cob^G \to \Mod_\kk$.
A  precise definition of  a 3-dimensional  HQFT  involves Lagrangian spaces in homology of surfaces and $p_1$-structures in  cobordisms (see \cite{Tu1}).  However, the HQFTs studied in this paper do not depend on this data and we ignore it from now on.

We now state our main results.
Consider    an additive  spherical $G$-fusion category~$\cc=\oplus_{g\in G} \, \cc_g$  over an algebraically closed field~$\kk$  such that $\dim(\cc_1) \neq 0$.   Consider the  center $\zz_G(\cc)=\oplus_{g\in G} \, \zz_g(\cc)$ of $\cc$ relative to  $\cc_1$ (see \cite{GNN} for   finite~$G$ and     \cite{TVi3} for   all~$G$). By    \cite{TVi3},   $\zz_G(\cc)$ is an additive $G$-modular category.  Thus~$\cc$ gives rise to two 3-dimensional HQFTs over~$\kk$: the state sum HQFT  $\vert \cdot \vert_\cc $ and the surgery HQFT $\tau_{\zz_G(\cc)}$ determined by the square root  $ \dim(\cc_1)$ of $ \dim(\zz_1(\cc)) =(\dim(\cc_1))^2 $.

\begin{thm}\label{thm-comparison-intro}
The HQFTs $\vert \cdot \vert_\cc$ and  $\tau_{\zz_G(\cc)}$   are  isomorphic.
\end{thm}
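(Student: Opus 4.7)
The plan is to extend the non-equivariant comparison between Turaev-Viro-type and Reshetikhin-Turaev-type 3-dimensional invariants (the case $G=1$) to the $G$-equivariant setting, and to upgrade scalar equality to an isomorphism of HQFTs. Since both $\vert \cdot \vert_\cc$ and $\tau_{\zz_G(\cc)}$ are symmetric monoidal functors $\Cob^G \to \Mod_\kk$, the aim is to produce a symmetric monoidal natural isomorphism $\varphi\co \vert \cdot \vert_\cc \Rightarrow \tau_{\zz_G(\cc)}$. The two HQFTs give complementary presentations of the same topological data: the state sum HQFT evaluates skeletons of $G$-cobordisms with $\cc$-colorings whose $G$-gradings match the classifying map to $K(G,1)$, whereas the surgery HQFT evaluates $\zz_G(\cc)$-colored ribbon graphs in standard handlebodies surgered along $G$-colored framed links.

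First, for a $G$-surface $\Sigma$, I would construct $\varphi_\Sigma\co \vert \Sigma \vert_\cc \to \tau_{\zz_G(\cc)}(\Sigma)$ as a map between the universal $\kk$-modules spanned by colored graph data. The natural candidate is induced by the forgetful functor $U\co \zz_G(\cc) \to \cc$: a $\zz_G(\cc)$-colored ribbon graph in a handlebody with boundary $\Sigma$ is evaluated by applying $U$ to its colors and computing the resulting state sum on a skeleton of the handlebody. Descending to a well-defined map on the defining quotients requires translating the state sum relations (skeleton independence, bubble and $2$-$3$ Pachner moves) into surgery relations (handle-slides and stabilization), a step which uses the hypothesis $\dim(\cc_1)\neq 0$ to normalize scalars and to construct the Kirby color of $\zz_G(\cc)$.

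Second, I would establish naturality with respect to $G$-cobordisms. The core identity is a $G$-equivariant analog of the chain-mail formula: given a $G$-cobordism $M$ with a handle decomposition compatible with the classifying map $f\co M \to K(G,1)$, extract a framed link $L\subset M$ formed from meridians of 1-handles and attaching circles of 2-handles, and color its components by Kirby colors of $\zz_G(\cc)$ whose $G$-grading matches the holonomy of~$f$ along each meridian. The surgery invariant of $M$ computed from $L$ must then coincide, cell by cell, with the state sum of $M$ computed on the Poincar\'e-dual skeleton. This identity reduces through local graphical moves to the axioms of spherical $G$-fusion categories and to the defining properties of the $G$-center.

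The main obstacle is precisely this $G$-equivariant chain-mail identity. In the non-equivariant case it is already delicate; here every summation over simple objects splits along the $G$-grading, and one must verify the grading coherence of each local move, including on disconnected $M$ with non-trivial $G$-cohomology. Once naturality is in hand, monoidality and symmetry of $\varphi$ follow from multiplicativity of both HQFTs on disjoint unions of $G$-surfaces together with the behaviour of chain-mail under the swap map. The bijectivity of each $\varphi_\Sigma$ can finally be established on model $G$-surfaces (spheres and tori with prescribed holonomy) and propagated by gluing along the symmetric monoidal structure of $\Cob^G$.
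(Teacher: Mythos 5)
Your overall strategy (build the isomorphism by hand and check it via a chain-mail style comparison) is a plausible outline, but it diverges substantially from the paper's route and has a genuine gap at the final step.

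The paper does not construct the natural isomorphism directly on $\Cob^G$. Instead it first \emph{extends} both $\vert\cdot\vert_\cc$ and $\tau_{\zz_G(\cc)}$ to graph HQFTs on the much larger symmetric monoidal category $\Cob^G_{\zz_G(\cc)}$ of $\zz_G(\cc)$-colored $G$-surfaces and $G$-cobordisms with embedded colored ribbon graphs (Theorems~\ref{thm-extension-state-sum-graph-HQFT} and~\ref{thm-comparison}). This category is left rigid, and the surgery graph HQFT is non-degenerate, so by the abstract rigidity argument ([TVi5, Lemma~17.2]) it suffices to check only (a)~that the state spaces are abstractly isomorphic for each colored $G$-surface, and (b)~that the two scalar invariants of colored $G$-graphs in closed 3-manifolds agree. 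Item~(a) is done by a coend computation (Formulas~\eqref{eq-compare-2} and~\eqref{eq-def-tau-B-Sigma}), and item~(b) by the surgery formula via torus vectors (Lemmas~\ref{lem-TQFTsurg-G} and~\ref{lem-formul-HUL}, Formulas~\eqref{eq-compare-1} and~\eqref{expansionofthetorusvector}). The original Theorem~\ref{thm-comparison-intro} is then deduced by restricting the resulting isomorphism of graph HQFTs along the inclusion $\Cob^G\hookrightarrow \Cob^G_{\zz_G(\cc)}$.

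The concrete gap in your plan is the last paragraph. You propose to prove that each $\varphi_\Sigma$ is bijective ``on model $G$-surfaces (spheres and tori) and propagated by gluing.'' Gluing of surfaces in $\Cob^G$ does not express the state space of a higher-genus $G$-surface as a tensor product or quotient of torus spaces in any way that would transport isomorphisms, and the monoidal structure is by disjoint union only, which does not increase genus. Without a mechanism like non-degeneracy you cannot upgrade a naturally defined homomorphism and a matching of scalar invariants to an isomorphism. This is precisely the role played by the extension to $\Cob^G_{\zz_G(\cc)}$: only on this larger category is there enough ``surgery density'' to conclude bijectivity abstractly. Your descent step (well-definedness of the map induced by the forgetful functor on state-sum quotients) and the $G$-equivariant chain-mail identity are both plausible but are also substantial, and in the paper they correspond respectively to the invariance statement in Theorem~\ref{thm-state-sum-ribbon-graphs} and to the surgery computation of Section~\ref{sect-sugery-computation}; neither is actually carried out by what you have written.
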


Theorem~\ref{thm-comparison-intro} means that one can pick  for each $G$-surface~$\Sigma$ a $\kk$-linear isomorphism of modules
 $ \vert \Sigma
\vert_\cc  \simeq \tau_{\zz_G(\cc)}(\Sigma) $  so that the resulting family of isomorphisms is  compatible with disjoint unions of $G$-surfaces and  the action of $G$-cobordisms.
For a  closed   oriented 3-manifold~$M$ endowed with a homotopy class of maps to $K(G,1)$,  Theorem~\ref{thm-comparison-intro} gives
$$
\vert M \vert_\cc=\tau_{\zz_G(\cc)}(M)\in \kk.
$$
For   $G=\{1\}$, Theorem~\ref{thm-comparison-intro}   was    established in \cite{TVi1} and independently in \cite{Ba2}.

We prove Theorem~\ref{thm-comparison-intro} by first introducing the notion of a graph HQFT over a crossed $G$-graded category (see Section~\ref{sect-def-graph-HQFT}) and then by deducing Theorem~\ref{thm-comparison-intro} from the following stronger claim:

\begin{thm}\label{thm-comparison-intro-general}
Both HQFTs $\vert \cdot \vert_\cc$ and  $\tau_{\zz_G(\cc)}$ extend to graph HQFTs over $\zz_G(\cc)$. The resulting graph HQFTs are  isomorphic to each other.
\end{thm}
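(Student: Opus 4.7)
The plan is to generalize the $G=\{1\}$ strategy of~\cite{TVi1} to the $G$-equivariant setting, in three stages corresponding to the two extensions asserted in the statement and their comparison.

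First, I would extend $\tau_{\zz_G(\cc)}$ to a graph HQFT via the standard Reshetikhin-Turaev mechanism. A morphism in the enriched cobordism category is a $G$-cobordism carrying an embedded ribbon graph whose edges are colored by homogeneous objects of $\zz_G(\cc)$ in a manner compatible with the holonomies of the underlying principal $G$-bundle, and whose coupons are colored by morphisms of $\zz_G(\cc)$. A surgery presentation of such a decorated $G$-cobordism together with the $G$-modularity of $\zz_G(\cc)$ established in~\cite{TVi3} yields a well-defined invariant, and the HQFT axioms are verified along the pattern of~\cite{TVi4}.

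Second, I would extend $\vert\cdot\vert_\cc$ to a graph HQFT. The essential ingredient is that an object of $\zz_G(\cc)$ is, by definition, a homogeneous object of $\cc$ equipped with a $G$-half-braiding with $\cc$. This half-braiding provides exactly the data needed to let a $\zz_G(\cc)$-colored ribbon graph slide through the $2$-faces of a skeleton of the ambient cobordism, so the graph can be absorbed into an enlarged skeleton and evaluated by a suitably decorated state sum. Independence of the choice of skeleton carrying the graph is checked on elementary moves exactly as in the pure state-sum construction of~\cite{TVi2}, with the half-braidings absorbing the extra terms produced by moves crossing the graph.

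Third, to identify the two graph HQFTs, I would adapt the chain-mail argument of Roberts in the form used in~\cite{TVi1}. Given a $G$-cobordism with a $\zz_G(\cc)$-colored ribbon graph and a handle decomposition transverse to the graph, the chain-mail construction produces a colored link in the surgery presentation whose evaluation by $\tau_{\zz_G(\cc)}$ reproduces the decorated state sum $\vert\cdot\vert_\cc$. Matching this with the direct Reshetikhin-Turaev evaluation of the surgery link together with the persisting graph yields the isomorphism on closed decorated $G$-manifolds; naturality on $G$-surfaces and compatibility with gluing then follow formally from the monoidality of both constructions and the universal property of the enriched cobordism category. The main technical obstacle will be the $G$-graded chain-mail calculation itself: the Kirby color of $\zz_G(\cc)$ is now a family of objects indexed by $G$, the attaching circles of the handles carry $G$-labels inherited from the principal bundle, and invariance under handle slides, Kirby moves, and cellular moves on the skeleton must all be verified simultaneously while respecting the $G$-crossed braiding of $\zz_G(\cc)$.
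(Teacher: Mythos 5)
Your proposal shares the paper's three-stage architecture, and the first two stages align with the paper in spirit: the paper extends $\tau_{\zz_G(\cc)}$ via surgery along the lines of \cite{TVi4}, and extends $\vert\cdot\vert_\cc$ by replacing triangulations with knotted plexuses in skeletons and evaluating each node against an invariant $\inv_\cc$ of knotted $\cc$-nets in $S^2$ (a generalized $6j$-symbol). Your "absorb the graph into an enlarged skeleton and let the half-braidings carry it across faces" is essentially that construction, and it is indeed the right idea. Where you diverge is the comparison. The paper does \emph{not} use a chain-mail argument; that is closer to Balsam \cite{Ba2} for $G=\{1\}$. Instead it proves an abstract surgery formula (Lemma~\ref{lem-TQFTsurg-G}) valid for \emph{any} graph HQFT $Z$ over a $G$-crossed category, expressing $Z(M,\Omega)$ as the pairing of $Z$ applied to the link exterior with $\alpha$-torus vectors, one for each component of a surgery link. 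It then computes the torus vector of the state-sum theory in the basis $\{[J]\}_{J\in \mathcal{J}_\alpha}$ and matches term by term with the Reshetikhin-Turaev surgery sum, using the planar identity $|S^3,\Omega|_\cc=(\dim\cc_1)^{-1}F_{\zz_G(\cc)}(\Omega)$. This route avoids the $G$-graded chain-mail calculation and all the delicate Kirby-move bookkeeping you flag as the main obstacle, at the cost of having first built the general surgery formula.

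The genuine gap is your claim that once the scalar invariants agree on closed $\zz_G(\cc)$-colored $G$-manifolds, the module isomorphism on $G$-surfaces and the full naturality "follow formally from monoidality and the universal property of the enriched cobordism category." They do not. The formal mechanism the paper invokes (Lemma 17.2 of \cite{TVi5}, exploiting the left rigidity of $\Cob^G_{\zz_G(\cc)}$ and the non-degeneracy of $\tau_{\zz_G(\cc)}$) reduces the theorem to \emph{two} independent inputs: the scalar agreement, and an \emph{abstract isomorphism of $\kk$-modules} $|\Sigma|_\cc\simeq\tau_{\zz_G(\cc)}(\Sigma)$ for every colored $G$-surface~$\Sigma$. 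Non-degeneracy of $\tau_{\zz_G(\cc)}$ plus scalar agreement plus rigidity gives an injection of one state space into the other, but not a bijection, unless one either proves non-degeneracy of $|\cdot|_\cc$ independently (the paper deduces it only as a corollary) or exhibits the abstract module isomorphism directly. The paper does the latter: it identifies $|\Sigma|_\cc$ with a $\Hom$-space in $\zz_G(\cc)$ built from the objects $\widetilde{C}_{\alpha,\beta}$, recognizes each $\widetilde{C}_{\alpha,\beta}$ as the coend $\int^{X\in\zz_\beta(\cc)}(\varphi_\alpha(X))^*\otimes X$, decomposes the coend using additivity and finite semisimplicity of $\zz_G(\cc)$, and matches the result against the Verlinde-type formula for $\tau_{\zz_G(\cc)}(\Sigma)$. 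This is a substantive computation, not a formal consequence, and it is missing from your proposal.
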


Theorem~\ref{thm-comparison-intro-general} is the main result of this paper and is a combination of several claims established in Section~\ref{sect-surg-HQFT} and of Theorem~\ref{thm-extension-state-sum-graph-HQFT} and Theorem~\ref{thm-comparison}.
The extension of the state sum HQFT to a graph HQFT (Theorem~\ref{thm-extension-state-sum-graph-HQFT})
uses an invariant of colored knotted nets in the 2-dimensional sphere (see Section~\ref{sect-prelimoncolorednets})
which plays the role of the familiar $6j$-symbols. The comparison of the surgery and state sum graph HQFTs (Theorem~\ref{thm-comparison}) is based on a surgery formula  for  graph HQFTs
via so-called torus vectors
(see Section~\ref{sect-sugery-computation}).
In the appendix, we recall the braided structure of~$\zz_G(\cc)$.\\

We fix throughout the paper a non-zero commutative ring~$\kk$, a discrete  group~$G$,  and an Eilenberg-MacLane  space  $\X=K(G,1)$ with base point $\x\in \X$.  Thus, $\X$ is a   connected aspherical CW-space and   $\pi_1(\X,\x)=G$.\\

\subsection*{Acknowledgments}
V.\@ Turaev acknowledges support from the NSF grant DMS-1664358. A.\@ Virelizier 
acknowledges support from the Labex CEMPI (ANR-11-LABX-0007-01) and from the FNS-ANR grant OChoTop (ANR-18-CE93-0002).

\section{Graded monoidal categories}\label{sect-G-categories}
We   recall      the  basic notions of the theory of graded and   crossed   categories referring to \cite{ML1}, \cite{EGNO}, \cite{Tu1}, \cite{TVi3}-\cite{TVi5} for details and   proofs.

\subsection{Pivotal categories}\label{sect-pivotal-cat}\label{sect-spherical-cat}\label{sect-lin-pivotal-cat}
A \emph{pivotal} category is a monoidal category $\cc=(\cc,\otimes,\un)$ such that  each
object $X$ of $\cc$ has a \emph{dual
object}~$X^*\in \cc$ and four morphisms
\begin{align*}
& \lev_X \co X^*\otimes X \to\un,  \qquad \lcoev_X\co \un  \to X \otimes X^*,\\
&   \rev_X \co X\otimes X^* \to\un, \qquad   \rcoev_X\co \un  \to X^* \otimes X,
\end{align*}
satisfying   several   conditions which say, in summary,  that the associated  left/right dual functors   coincide as   monoidal functors. In particular, each morphism $f\co X \to Y$ in~$\cc$ has a dual morphism $f^*\co Y^* \to X^*$  computed by
\begin{align*}
f^*&=(\lev_Y \otimes  \id_{X^*})(\id_{Y^*}  \otimes f \otimes \id_{X^*})(\id_{Y^*}\otimes \lcoev_X)\\
 &= (\id_{X^*} \otimes \rev_Y)(\id_{X^*} \otimes f \otimes \id_{Y^*})(\rcoev_X \otimes \id_{Y^*}).
\end{align*}

We shall  omit brackets in  monoidal products   and
suppress the associativity constraints $(X\otimes Y)\otimes Z\cong
X\otimes (Y\otimes Z)$, the unitality constraints $X\otimes \un
\cong X\cong \un \otimes X$, and the duality constraints
$X^* \otimes Y^*\cong (Y\otimes X)^* $ and $\un^* \cong \un$.
This does not lead  to   ambiguity because, by the Mac Lane coherence theorem,  all legitimate ways of inserting these constraints   give the same results.

The \emph{left} and  \emph{right traces} of an endomorphism $f$ of an object
$X$ of a pivotal category $\cc$ are defined by
$$
\tr_l(f)=\lev_X(\id_{\ldual{X}} \otimes f) \rcoev_X  \quad {\text {and}}\quad \tr_r(f)=  \rev_X( f \otimes
\id_{\ldual{X}}) \lcoev_X .
$$
Both traces take values in   the commutative monoid   $\End_\cc(\un)$. The \emph{left} and  \emph{right dimensions} of an object $X\in \cc$   are defined by
$$
\dim_l(X)=\tr_l(\id_X) \in\End_\cc(\un)  \quad \text{and} \quad \dim_r(X)=\tr_r(\id_X) \in\End_\cc(\un).
$$

A \emph{spherical category} is a pivotal category such that the left and
right traces of any endomorphism of any object  are equal. In a spherical category, the \emph{trace} of an endomorphism~$f$ and the \emph{dimension} of an object~$X$ are  defined by
$$
\tr(f)=\tr_l(f)=\tr_r(f)
\quad\text{and}\quad
\dim(X)=\dim_l(X)=\dim_r(X).
$$

\subsection{Linear categories} A monoidal category~$\cc$ is \emph{\kt linear}  if for all $X,Y\in \cc$, the set $\Hom_\cc(X,Y)$ carries a structure of a left  \kt module so that  both  the composition and the monoidal product of morphisms    are $\kk$-bilinear.
Note that then the monoid $\End_\cc(\un)$ is a commutative \kt algebra.

 In the rest of this section we focus on \kt  linear pivotal  categories.
 For  such a category $\cc$,   we  let
$\Aut(\cc)$ be the category of  pivotal strong monoidal \kt linear auto-equivalences of~$\cc$.  The objects of the category $\Aut(\cc)$  are  pivotal  strong monoidal functors
$\cc\to \cc$   which   are   $\kk$-linear on the $\Hom$-sets and are equivalences of categories. The  morphisms in  $\Aut(\cc)$ are
 monoidal natural isomorphisms   of such functors.   Then $\Aut(\cc)$ is a strict monoidal category with
 monoidal product being the composition of functors and
monoidal unit being the identity endofunctor of~$\cc$.

\subsection{Graded categories}\label{sect-graded-pivotal-cat}
In this paper, by a  \emph{$G$-graded  category} $\cc$ (over $\kk$), we mean a  \kt  linear pivotal  category   endowed with pairwise disjoint full   subcategories $\{\cc_{\alpha}\}_{{\alpha}\in G}$ such that:
\begin{enumerate}
\labeli
\item if $X\in \cc_{\alpha}$ and $Y\in \cc_{\beta}$ with ${\alpha}\neq {\beta}$, then $\Hom_{\cc} (X,Y)=0$;
\item if $X\in \cc_{\alpha}$ and  $Y\in \cc_{\beta}$, then $X\otimes Y\in \cc_{{\alpha}{\beta}}$;
\item the unit object $\un$ of~$\cc$ belongs to $ {\cc}_1$  where $1\in G$ is the group unit;
\item if $X \in \cc_\alpha$, then  $X^* \in \cc_{\alpha^{-1}}$.
\end{enumerate}
The category $\cc_1$ corresponding to $1\in G$ is called the {\it neutral component}
of~$\cc$.

An object $X$ of  a $G$-graded  category ${\cc} $ is   {\it homogeneous} if   $X\in \cc_\alpha$ for some $\alpha\in
G$. Such an $\alpha$  is then uniquely determined by $X$ and   is   denoted by  $|X|$.  We let $\cch=\amalg_{\alpha\in G}\,  \cc_\alpha$ be
the full subcategory of homogeneous objects of $\cc$.   Clearly,  $\cch$ is itself a $G$-graded  category  and   $(\cch)_\mathrm{hom}=\cch$.
 Note  that two objects $X \in \cc_\alpha$, $ Y\in \cc_\beta $ with $\alpha\neq \beta$ may be  isomorphic but then both are zero objects in the sense that  $\id_X=0$ and $\id_Y=0$.

 A  $G$-graded  category $\cc$ is \emph{additive} if any finite (possibly empty) family of objects of~$\cc$ has a direct sum in~$\cc$. In this case, we write  $\cc=\oplus_{\alpha\in G}\,  \cc_\alpha$.

\subsection{Crossed  categories}\label{sect-crossed-graded-cat}
  Let  $\overline{G}$ be  the category whose objects are elements of
the group $G$ and morphisms are identities. We view $\overline{G}$ as a strict monoidal category with
monoidal product
$\alpha\otimes \beta= \beta \alpha$ for all $\alpha, \beta \in G$.

A \emph{crossing}\footnote{In this paper, crossings correspond to pivotal crossings in \cite{TVi3,TVi4}.}  of a $G$-graded  category~$\cc$ is a
strong monoidal functor  $\varphi\co \overline{G} \to
\Aut(\cc)$  such that $ \varphi_\alpha(\cc_\beta) \subset
\cc_{\alpha^{-1} \beta \alpha} $ for all $\alpha,\beta \in G$.  The condition  that $\varphi_\alpha\co \cc \to \cc$ is a  pivotal  strong monoidal functor means that it comes equipped with
      natural isomorphisms
\begin{align*}
&(\varphi_\alpha)_0 \co \un \iso \varphi_\alpha(\un),\\
&(\varphi_\alpha)_2=\{(\varphi_\alpha)_2(X,Y) \co \varphi_\alpha(X) \otimes \varphi_\alpha(Y) \iso \varphi_\alpha(X \otimes Y)\}_{X,Y \in \cc},\\
&\varphi_\alpha^1=\{\varphi_\alpha^1(X) \co \varphi_\alpha(X^*) \iso (\varphi_\alpha(X))^*\}_{X \in \cc}.
\end{align*}
 The condition that the functor  $\varphi\co \overline{G} \to
\Aut(\cc)$   is strong monoidal means that it comes equipped with
 natural isomorphisms
\begin{align*}
&\varphi_2=\bigl\{\varphi_2(\alpha,\beta)=\{\varphi_2(\alpha,\beta)_X \co \varphi_\alpha\varphi_\beta(X)\iso \varphi_{\beta\alpha}(X)\}_{X \in \cc}\bigr\}_{\alpha,\beta \in G},\\
&\varphi_0=\{(\varphi_0)_X \co X \iso \varphi_1(X)\}_{X \in \cc}.
\end{align*}
These isomorphisms    should   satisfy appropriate compatibility conditions, see \cite{TVi4}.

 A \emph{$G$-crossed category} $(\cc, \varphi)$  is a pair consisting of a $G$-graded category $\cc$ and a crossing~$\varphi$ of~$\cc$.
Then for any $\alpha \in G$ and any integer $n \geq 3$,  we have a
 natural transformation
$$
(\varphi_\alpha)_n=\{(\varphi_\alpha)_n(X_1,\dots,X_n) \co \varphi_\alpha(X_1) \otimes \cdots \otimes \varphi_\alpha(X_n) \to \varphi_\alpha(X_1 \otimes \cdots \otimes X_n)\}
$$
where $X_1,\dots,X_n \in \cc$.  It is obtained by composing   monoidal    products of the  transformations $(\varphi_\alpha)_2$ and the identity
morphisms.   For instance, for $n=3$,
$$
(\varphi_\alpha)_3(X_1,X_2 ,X_3)=(\varphi_\alpha)_2(X_1,X_2 \otimes X_3) (\id_{\varphi_\alpha(X_1)} \otimes (\varphi_\alpha)_2(X_2,X_3)).
$$
 We can also use~$\varphi$ to       transform  certain isomorphisms in $\cc$.  Namely, for     any isomorphism $\psi\colon X \to \varphi_\alpha (Y)$ with $  X, Y\in \cc$ and $\alpha\in G$,   we
let    $  \psi^- \colon X^*\to
\varphi_{\alpha} (Y^*)$ be the   composition of the
isomorphisms
\begin{equation*}\label{psi-}
\xymatrix@R=1cm @C=1.7cm {  X^* \ar[r]^-{  (\psi^{-1})^* }     & (\varphi_\alpha (Y))^*    \ar[r]^-{ (\varphi_\alpha^1 (Y))^{-1} }     &
 \varphi_\alpha (Y^*). }
\end{equation*}
In this context, we will    sometimes write $\psi^+$ for $\psi$.

\subsection{Braided and  ribbon graded  categories}\label{sect-braided-graded-cat}\label{sect-ribbon-graded-cat}
A \emph{$G$-braiding}\footnote{In this paper, $G$-braidings correspond to pivotal $G$-braidings in \cite{TVi3,TVi4}.}  of a  $G$-crossed category $(\cc,\varphi)$ is a family
of isomorphisms
$$
\tau=\{\tau_{X,Y} \co X \otimes Y \to Y \otimes
\varphi_{|Y|}(X)\}_{X \in \cc, Y \in \cch}
$$
which is natural in   $X$, $Y$   and  satisfies  three conditions: two of them  generalizing  the usual braiding relations in braided monoidal categories and the third condition relating~$\tau$  and~$\varphi$, see \cite{TVi4} for details.
A \emph{$G$-braided  category} is a $G$-crossed  category
 endowed with a $G$-braiding.

The \emph{twist} of a $G$-braided category $(\cc,\varphi,\tau)$ is the natural isomorphism $\theta=\{\theta_X\}_{X \in
\cch}$ defined by
$$
\theta_X =(\lev_X \otimes \id_{\varphi_{|X|}(X)})(\id_{X^*} \otimes \tau_{X,X})(\rcoev_X \otimes \id_X)\co X\to \varphi_{|X|}(X).
$$
A \emph{$G$-ribbon category} is a  $G$-braided  category $(\cc,\varphi,\tau)$ whose twist is \emph{self-dual} in the sense that  for all $\alpha\in G$ and all $X \in \cc_\alpha$,
$$
(\theta_X)^*= (\varphi_0)_X^* \, (\varphi_2(\alpha^{-1},\alpha)_X^{-1})^* \, \varphi_{\alpha^{-1}}^1(\varphi_{\alpha}(X)) \, \theta_{\varphi_{\alpha}(X)^*}.
$$
Note that then  the neutral component $\cc_1$  of  $\cc$  is   a ribbon  category   in the usual sense  with braiding
$$
\{c_{X,Y}=(\id_Y \otimes (\varphi_0)^{-1}_X)\tau_{X,Y} \co X \otimes Y \to Y \otimes X\}_{X,Y \in \cc_1}
$$
and   twist $$ \{v_X=(\varphi_0)^{-1}_X \theta_X\co X \to X\}_{X \in \cc_1}.$$  All $G$-ribbon categories are spherical as pivotal categories (see \cite[Section 6.3]{TVi4}).

\subsection{Fusion graded categories}\label{sect-fusion-graded-cat}
An object $X$ of a \kt linear category is \emph{simple} if the \kt module of the endomorphisms of~$X$  is a free \kt module of rank 1 with  basis $\{\id_X\}$.     It is clear that    all  objects
isomorphic to a simple object are  simple  and (in a pivotal category)  the dual of a   simple object is  simple.

A \emph{$G$-fusion category} (over $\kk$) is a $G$-graded category $\cc$  (over $\kk$) such that
there is a set~$I$ of homogeneous simple objects of $\cc$ satisfying the following four conditions:
\begin{enumerate}
\labela
\item for each $\alpha\in G$, the set $I_\alpha\subset I$ of  elements of $I$ belonging to $\cc_\alpha$ is finite and non-empty;
\item   the unit object  $\un$ of $\cc$ belongs to $I_1\subset I$;
\item $\Hom_\cc(i,j)=0$ for any distinct  $i,j \in I $;
\item  every object of $\cc$ is  a   direct sum of a finite family of elements of~$I$.
\end{enumerate}
The set~$I$   is then called a \emph{representative set of simple objects}. Clearly, $I=\amalg_{\alpha\in G}\, I_\alpha$ and  every simple object of~$\cc$ is isomorphic to precisely one   object belonging to~$I$.

Let $\cc$ be a $G$-fusion category with representative set of simple objects~$I$. Condition (b)   above  implies that the map $\kk \to
\End_\cc(\un)$, $k \mapsto k \, \id_\un$  is a \kt algebra isomorphism
which we use  to identify $\End_\cc(\un)=\kk$.
Conditions (c) and~(d) imply that the Hom-sets in~$\cc$ are free of finite rank. The neutral component~$\cc_1$ of~$\cc$ is a  fusion (in the usual sense) \kt linear pivotal category  of  dimension
$$
\dim(\cc_1)=\sum_{i\in I_1} \dim_l(i)\dim_r(i) \in \End_\cc(\un)=\kk.
$$

\subsection{Modular graded categories}\label{sect-modular-graded-cat}
Consider a  $G$-ribbon $G$-fusion  category $\cc$ (over~$\kk$) with representative set  of simple objects~$I $. For $i,j \in I_1\subset I$,   set
$$
S_{i,j}= \tr(c_{j,i}\circ c_{i,j}\colon  i\otimes  j\to
i\otimes j)\in \End_{\cc}(\un)=\kk
$$
where $c_{i,j}\co i\otimes j\to j\otimes i$ is the braiding in $\cc_1$  defined in  Section~\ref{sect-ribbon-graded-cat}.
The symmetric matrix  $S=[S_{i,j}]_{i,j\in I_1} $ is called the {\it $S$-matrix} of $\cc$.  Since each object $ i\in I_1$ is
simple, the   twist  $ v_i\co i \to   i$ in $\cc_1$ (see Section~\ref{sect-ribbon-graded-cat}) expands as   $v_i=\nu_i \,\id_{ i}$ with  $\nu_i\in \kk$.  Since $v_i$ is an isomorphism, $\nu_i$ in invertible in $\kk$.
Set
$$
\Delta_{\pm} =\sum \limits_{i\in I_1}{\nu_i^{\pm 1}(\dim  (i))^2}\in
\kk
$$
where $\dim  (i)=\dim_l  (i)=\dim_r  (i)$ since such a category~$\cc$ is  spherical.

A \emph{$G$-modular category} is a   $G$-ribbon   $G$-fusion  category whose $S$-matrix is invertible over the ground ring~$\kk$.
In other words, a $G$-modular category  is a   $G$-ribbon   $G$-fusion
 category $\cc$ whose neutral component  $\cc_1$  is
modular  in the  sense of \cite{Tu0}.
The  properties of modular categories imply  that then the elements  $\Delta_{\pm} \in \kk$    above  are invertible in $\kk$    and   $\dim(\cc_1) =\Delta_+  \Delta_-$, see \cite[Formula II.2.4.a]{Tu0}. As a consequence, $\dim(\cc_1)$ is invertible in $\kk$.

A $G$-modular category $\cc$ is \emph{anomaly free} if $\Delta_+=\Delta_-$. Then $\Delta=\Delta_+=\Delta_-$ is  a square root   of $\dim(\cc_1)$ called the \emph{canonical rank} of $\cc$.

\subsection{The graded center}\label{sect-center-graded-cat}
By \cite{GNN},  the \emph{$G$-center} $\zz_G(\cc)$ of a  $G$-graded  category~$\cc$ over $\kk$ is the   category obtained as  the (left) center of~$\cc$
 relative to  its neutral component  $\cc_1 \subset \cc$.  The objects of $\zz_G(\cc)$ are  (left) half braidings of $\cc$ relative to~$\cc_1$, that is,   pairs $(A,\sigma)$ where $A \in \cc$ and
$$
\sigma=\{\sigma_X \co A \otimes
X\to X \otimes A\}_{X \in \cc_1}
$$
is a natural family of isomorphisms satisfying
 \begin{equation*}
 \sigma_{X \otimes Y}=(\id_X \otimes
\sigma_Y)(\sigma_X \otimes \id_Y)
\end{equation*} for all $X,Y \in \cc_1$.
A morphism $(A,\sigma)\to (A',\sigma')$ in $\zz_G(\cc)$ is a
morphism $f \co A \to A'$ in $\cc$ such that $(\id_X \otimes f)\sigma_X=\sigma'_X(f \otimes \id_X)$ for all
$X\in\cc_1$. In particular,
$$
\Hom_{\zz_G(\cc)}\bigl((A,\sigma), (A',\sigma')\bigr) \subset \Hom_\cc(A,A').
$$
The monoidal product of $\zz_G(\cc)$ is defined by
\begin{equation*}
(A,\sigma) \otimes (B, \rho)=\bigl(A \otimes B,(\sigma \otimes \id_B)(\id_A \otimes \rho) \bigr)
\end{equation*}
and the unit object of $\zz_G(\cc)$ is the pair $\un_{\zz_G(\cc)}=(\un,\{\id_X\}_{X \in \cc_1})$.

The category $\zz_G(\cc)$  inherits most of the properties of $\cc$.
The given \kt linear structure in~$\cc$ induces a  linear structure in $\zz_G(\cc)$ in the obvious way. The category $\zz_G(\cc)$ is pivotal: the dual of $(A,\sigma) \in \zz_G(\cc)$
is  $(A,\sigma)^*=(A^*,\sigma^\dagger)$, where
\begin{equation*}
 \psfrag{M}[Bc][Bc]{\scalebox{.9}{${{A}}$}}
 \psfrag{X}[Bc][Bc]{\scalebox{.9}{$X$}}
 \psfrag{s}[Bc][Bc]{\scalebox{.9}{$\sigma_{X^*}$}}
\sigma^\dagger_X= \rsdraw{.45}{1}{sigmadual}= \;\,\psfrag{s}[Bc][Bc]{\scalebox{.9}{$\sigma^{-1}_{X}$}} \rsdraw{.45}{1}{sigmadual2} \co {{A}}^* \otimes X \to X \otimes {{A}}^*,
\end{equation*}
with $\lev_{({{A}},\sigma)}= \lev_{{A}}$, $\lcoev_{({{A}},\sigma)}= \lcoev_{{A}}$,
$\rev_{({{A}},\sigma)}= \rev_{{A}}$, $\rcoev_{({{A}},\sigma)}= \rcoev_{{A}}$. Note that the
traces of morphisms and   dimensions of objects
in~$\zz_G(\cc)$ are the same as in~$\cc$ through the inclusion $\End_{\zz_G(\cc)}(\un_{\zz_G(\cc)}) \subset \End_\cc(\un)$.
The category $\zz_G(\cc)$ is $G$-graded:
for $\alpha \in G$, the  component $\zz_\alpha(\cc) \subset \zz_G (\cc)$ is the full subcategory of~$\zz_G(\cc)$ formed by the   half braidings $(A, \sigma)$ as above  with $A\in \cc_\alpha$. In particular, $\zz_1(\cc)$ is the usual  Drinfeld-Joyal-Street center  of $\cc_1$.  If $\cc$ is additive, then so is its  $G$-center, i.e.,   $\zz_G(\cc)=\oplus_{\alpha\in G} \, \zz_\alpha(\cc)$.

The \emph{forgetful functor} $ \uu\co  \zz_G(\cc)\to \cc$ carries $(A, \sigma)$   to $A$ and
acts in the obvious way on the morphisms.   This functor is  strict
monoidal, strict pivotal, \kt linear, and reflects isomorphisms, meaning that a morphism in $\zz_G(\cc)$ carried to an isomorphism in $\cc$ is itself an isomorphism.

If $\cc$ is a $G$-fusion category over a field, then $\zz_G(\cc)$ has a canonical structure of a $G$-braided  category (see \cite[Theorem 4.1]{TVi3}). Furthermore, if $\cc$ is spherical, then~$\zz_G(\cc)$ is $G$-ribbon (see  \cite[Lemma 5.2]{TVi3}).
For completeness, we recall  in Appendix~\ref{sect-appendix} the construction of the crossing and $G$-braiding of $\zz_G(\cc)$ as well as the computation of the twist of $\zz_G(\cc)$.

By \cite[Theorem 5.1]{TVi3} and \cite[Theorem 5.4]{TVi5}, if $\cc$ is an  additive spherical $G$-fusion  category over an algebraically closed field such that $\dim(\cc_1) \neq 0$,  then $\zz_G(\cc)$ is an anomaly free $G$-modular  category    with canonical rank   $\dim(\cc_1)$.

\section{Three-dimensional HQFTs}

We recall   the definition of a 3-dimensional HQFT with target $\X=K(G,1)$.

\subsection{The category $\Cob^G$}\label{sect-CobG-v0}
For an integer $n\geq 0$, by   an  $n$-manifold, we   mean a smooth  $n$-dimensional   manifold
  with boundary (possibly, empty). By convention, the empty set $\emptyset$  is  considered to be an  $n$-manifold for all~$n$.
The boundary $\partial M$  of an $n$-manifold~$M$   is  an   $(n-1)$-manifold  and $\partial (\partial M)=\emptyset$.
If $M$ is oriented, then $\partial M$ is oriented so that     at any   point of $\partial M$, the orientation of~$M$ is given by a direction away from~$M$ followed by the orientation of $\partial M$.    Given an oriented manifold $M$, we let~$-M$ be the same manifold with opposite orientation.
Clearly, $\partial (-M)=-\partial M$. A  \emph{closed} manifold is a compact manifold with empty boundary.

By a \emph{surface} we  mean  a 2-manifold. A surface~$\Sigma$ is \emph{pointed}   if   every  connected   component  of~$\Sigma$    is endowed with
a base point.  The set of base points of~$\Sigma$ is denoted by $\Sigma_\bullet$. A \emph{$G$-surface} is a pair consisting of a pointed closed oriented surface~$\Sigma$  and a homotopy class $g$ of maps   $(\Sigma, \Sigma_\bullet)\to (\X,\x)$.  Reversing orientation in~$\Sigma $, we obtain the \emph{opposite}  $G$-surface $-\Sigma=(-\Sigma, g)$.   The empty set $\emptyset$  is  considered to be a  $G$-surface with $\emptyset_\bullet=\emptyset$ and $-\emptyset =\emptyset$. The disjoint union of a finite family of  $G$-surfaces is a $G$-surface in the obvious way.  An  \emph{isomorphism}  of
$G$-surfaces   $ (\Sigma,g)\to (\Sigma',g')$ is an orientation-preserving
diffeomorphism $f\colon \Sigma\to \Sigma'$ such that $f(  \Sigma_\bullet)= \Sigma'_\bullet$ and $g =g'f$   as homotopy classes of maps $(\Sigma, \Sigma_\bullet)\to (\X,\x)$.

We define a  category $\Cob^G $ following  \cite{TVi2}. The objects of
$  \Cob^G$    are     $G$-surfaces.   For $G$-surfaces $\Sigma_0, \Sigma_1$, a morphism   $\Sigma_0 \to \Sigma_1$ in $\Cob^G$ is represented by a triple $(M,g,h)$ consisting of a compact oriented $3$-manifold~$M$ with  pointed boundary, a homotopy class $g$ of maps $(M, (\partial M)_\bullet ) \to (\X,\x)$, and an isomorphism of $G$-surfaces
$$
h\co (-\Sigma_0) \sqcup \Sigma_1 \to (\partial M, g \vert_{\partial M}).
$$
Such  triples $(M,g,h)$  are called     \emph{$G$-cobordisms between $\Sigma_0$ and $\Sigma_1$}.
Two    $G$-cobordisms  $(M,g,h ), (M', g', {h}'  )$   between $\Sigma_0$ and $\Sigma_1$ represent the same morphism $\Sigma_0\to \Sigma_1$   if there is an orientation-preserving   diffeomorphism  $f\co  M \to M'$   such that  $f( (\partial M)_\bullet)= (\partial M')_\bullet$,  ${g} =g' f$, and $h'=fh$.   Composition of morphisms in
$\Cob^G$ is defined via the obvious gluing of  cobordisms. The identity morphism of a    $G$-surface $\Sigma$   is
represented by the cylinder   $  \Sigma\times [0,1] $ endowed with the
standard identification of its boundary with  $(-\Sigma) \sqcup \Sigma  $.
The  morphisms $\emptyset \to \emptyset$ in $\Cob^G$ are represented by
 \emph{closed 3-dimensional $G$-manifolds}, that is, by pairs $(M,g)$ consisting of a closed oriented $3$-manifold~$M$
  and a homotopy class of maps $g\co M \to \X$.

The category $\Cob^G$ with monoidal product   induced by   disjoint union and  with unit object~$\emptyset$ is   a symmetric monoidal category.

\subsection{HQFTs}\label{sect-CobG-v1}
Let $\Mod_\kk$ be  the symmetric monoidal category of $\kk$-modules and $\kk$-linear homomorphisms.
A \emph{3-dimensional HQFT   over~$\kk$} with target $\X=K(G,1)$ is a symmetric strong monoidal functor
$$
Z \co \Cob^G \to \Mod_\kk.
$$
Such a functor carries the following data: a
$\kk$-module  $Z(\Sigma)$ for every $G$-surface $\Sigma$, a $\kk$-linear homomorphism $Z(M)\co  Z(\Sigma_0) \to Z(\Sigma_1)$ for every $G$-cobordism $M\co \Sigma_0 \to \Sigma_1$, a $\kk$-linear isomorphism  $Z_0\co \kk\simeq Z(\emptyset)$, and $\kk$-linear isomorphisms
$$
Z_2(\Sigma, \Sigma')\co  Z(\Sigma ) \otimes_\kk Z(\Sigma') \simeq Z(\Sigma \sqcup \Sigma')
$$
for any $G$-surfaces $\Sigma, \Sigma'$. This data should satisfy the usual compatibility axioms of a monoidal functor.
Two 3-dimensional HQFTs are \emph{isomorphic} if they are isomorphic as monoidal functors.

A 3-dimensional HQFT $Z$  can be applied to the morphism $\emptyset \to \emptyset$ in $\Cob^G$ represented by a
closed 3-dimensional $G$-manifold $(M,g)$.  The resulting    endomorphism of  $Z(\emptyset) \simeq \kk $   is multiplication by an element of~$\kk$. This element is  denoted  by  $Z(M,g)$ and  is a homeomorphism invariant of the pair  $(M,g)$.
 It is clear that isomorphic   HQFTs yield the same invariants of  closed 3-dimensional $G$-manifolds.

\section{Colored $G$-graphs}\label{sec-coloredgraphs}

In this  section, $M$ is a compact oriented 3-manifold   with pointed boundary and~$\bb$ is a  $G$-crossed   category
 over $\kk$  (see Section~\ref{sect-crossed-graded-cat}).  We discuss   $G$-graphs   in~$M$ and   their  $\bb$-colorings.
Our    definitions and results   are parallel to those of \cite{TVi4} where we studied $G$-graphs in~$\RR^3$.

\subsection{Pointed   ribbon graphs}\label{Ribbon graphs}
We   recall the   notion   of a ribbon graph   following \cite{Tu0}, \cite{TVi5}.  We   restrict ourselves to  ribbon graphs formed from arcs and coupons and having no circle components. An \emph{arc}  is an oriented segment, i.e., an oriented 1-manifold homeomorphic to  $[0,1]$. A \emph{coupon}  is a rectangle   with a distinguished  side called the \emph{bottom base}; the opposite side being   the \emph{top base}.
A  \emph{ribbon  graph}~$\Omega$ in~$M$  is a union of a finite number of arcs and coupons
embedded in~$M$  and called the \emph{strata}  of~$\Omega$. The strata   must satisfy the following  two  conditions:
\begin{enumerate}
\labeli
\item  they are disjoint except that some endpoints of the arcs may lie on the bases of the coupons;
 all  other endpoints of the arcs    lie in $\partial M  \setminus (\partial M)_\bullet $ and form  $\Omega \cap \partial M  $;
\item $\Omega$ carries a continuous  field of tangent directions in~$M$ (the framing) which is transversal to all strata and
tangent to $\partial M$  on  $\Omega\cap \partial M$.
\end{enumerate}
We  provide all coupons of a ribbon graph $\Omega\subset M$ with the orientation which together with the framing  of~$\Omega$
determine the   orientation of~$M$ opposite to the given one.

A ribbon graph  $\Omega$ in~$ M$  is \emph{pointed} if  every closed connected  component~$C$ of~$M$
such that    $C \cap \Omega \neq \emptyset$ is equipped with a base point  lying in $C \setminus \Omega$.
The set of these base points is denoted by $\Omega_\bullet$.  Clearly, the sets $(\partial M)_\bullet$ and
$\Omega_\bullet$ are disjoint, lie in $ M \setminus \Omega$, and their union  meets every
component of $M$ encountering $\Omega$ in at least one point. For example, the empty ribbon graph~$\emptyset$ in $M$ is pointed with $\emptyset_\bullet=\emptyset$.

\subsection{Tracks and detours}\label{Tracks}
Let $\Omega \subset M$ be a pointed ribbon graph.   Slightly pushing~$\Omega  $ along the  framing, we obtain a parallel copy $\widetilde \Omega \subset M\setminus \Omega$ of $\Omega$ which is also a ribbon  graph,  see the following picture (in this and the next pictures the orientation of~$M$ is  right-handed):
\begin{center}
\psfrag{1}[Bc][Bc]{\scalebox{.7}{$1$}}
\psfrag{2}[Bc][Bc]{\scalebox{.7}{$2$}}
\psfrag{3}[Bc][Bc]{\scalebox{.7}{$3$}}
\psfrag{M}[Br][Br]{\scalebox{.9}{$M$}}
\psfrag{x}[Bl][Bl]{\scalebox{.9}{$x_1$}}
\psfrag{y}[Bl][Bl]{\scalebox{.9}{$x_2$}}
\psfrag{z}[Bl][Bl]{\scalebox{.9}{$x_3$}}
\psfrag{O}[Bc][Bc]{\scalebox{.9}{$\Omega$}}
\psfrag{R}[Bc][Bc]{\scalebox{.9}{$\widetilde \Omega$}}
\rsdraw{.45}{.9}{ex-rib-graph1pn}\; .
\end{center}
Each stratum~${e}$   of~$\Omega$ yields a stratum~$\widetilde e$ of $ \widetilde \Omega$ in the obvious way. A \emph{track}  of~$e$ is a  homotopy class   of paths  in $M\setminus \Omega$ leading from  a point of  $(\partial M)_\bullet \cup \Omega_\bullet$  to~$\tilde e$.
For a  track~$\gamma$ of~${e}$, we let $\mu_\gamma\in \pi_1(M\setminus \Omega, \gamma(0))$ be  the homotopy class of the loop $\gamma l_{e} \gamma^{-1}$,  where $l_{e} $ is a small loop in $M\setminus \Omega$ based at the endpoint $\gamma(1)$ of~$\gamma$ and encircling~$e$ as  in the following picture (in particular, if~$e$ is an arc then its linking number with~$l_e$ is -1):
$$
\psfrag{e}[Bc][Bc]{\scalebox{.9}{$e$}}
\psfrag{g}[Bc][Bc]{\scalebox{.9}{$\gamma$}}
\psfrag{m}[Bc][Bc]{\scalebox{.9}{$\ell_e$}}
\rsdraw{.45}{.9}{meridian3} \; .
$$
Here and in the sequel,  the   bottom bases of coupons are drawn boldface.

A \emph{detour} in $M \setminus \Omega$ is a  homotopy class   of paths in $M \setminus \Omega$ with endpoints in   $(\partial M)_\bullet \cup \Omega_\bullet$. In particular, the homotopy class of a constant path in a point of $(\partial M)_\bullet \cup \Omega_\bullet$ is a detour called the \emph{constant detour}. If a detour~$\beta$   in~$M \setminus \Omega$ is composable with a track~$\gamma$    of a stratum of~$\Omega$ (i.e., if~$\beta$ ends in the starting  point   $\gamma(0)$ of~$\gamma$), then $\beta\gamma$  is a  track  of the same stratum and $\mu_{\beta \gamma}=\beta \mu_\gamma \beta^{-1}$.
It follows from our definitions  that every detour in $M \setminus \Omega$ either has both endpoints in $(\partial M)_\bullet $ or is a homotopy class of   loops based  at a point of $\Omega_\bullet$.

\subsection{$G$-graphs}\label{G-graphs}
A \emph{$G$-graph} is a triple $(M, \Omega, g)$  where~$\Omega$   is a pointed ribbon graph in~$M$    and~$g$ is a homotopy class of maps
\begin{equation}\label{chara}
(M \setminus \Omega, (\partial M)_\bullet \cup \Omega_\bullet) \to (\X,\x).
\end{equation}
Clearly,~$g$ carries any detour  in $M \setminus \Omega$   into a homotopy  class  of loops  in $\X $ based at~$\x$. The corresponding element of $G=\pi_1(\X ,\x)$ will be   denoted  by the same letter as the detour itself. Similarly, for any track~$\gamma$    of a stratum of~$\Omega$, the homotopy  class  of loops $\mu_\gamma$ (see Section~\ref{Tracks}) is carried by $g$ into an element of $G$ denoted  again by~$\mu_\gamma$.

For brevity, we will often drop the symbol~$g$ from the notation  for a $G$-graph  $(M, \Omega, g)$ and denote this $G$-graph by $(M,\Omega)$.

\subsection{Precolorings}
A $\bb$-\emph{precoloring} $u$ of  a $G$-graph   $(M,\Omega)$  comprises  two functions. The first function assigns
to every  track~$\gamma $ of an arc of~$\Omega$ an object $u_\gamma\in {{\bb}}_{ \mu_{\gamma} }$  called the \emph{color} of $\gamma$ (see Section~\ref{Tracks} for the definition of $\mu_{\gamma}$).  The second function  assigns  to every
pair $(\beta,\gamma)$, where $\beta$ is a detour in $M \setminus \Omega$ and~$\gamma$ is a track of an arc of~$\Omega$ which is  composable with~$\beta$,   an isomorphism
$$
u_{\beta, \gamma}\colon  u_{\beta \gamma} \to \varphi_{ \beta^{-1} } (u_\gamma).
$$
These functions  must satisfy the following  two  conditions (where $\varphi$ denotes the crossing of $\bb$):
\begin{enumerate}
\labeli
\item If  $\beta $ is  the constant detour  at  the starting  point of a track $\gamma$ of an arc of~$\Omega$, then
$$
u_{\beta,\gamma}=(\varphi_0)_{u_\gamma}\colon u_{ \gamma} \to \varphi_{1}
(u_\gamma).
$$
\item For any  composable detours $\beta, \delta$ in $M \setminus \Omega$   and  any track~$\gamma $ of   an arc of~$\Omega$ which is  composable with~$\delta$,   the following diagram   commutes:
\begin{equation*}
\xymatrix@R=1cm @C=3cm {
u_{ \beta \delta \gamma} \ar[r]^-{u_{  \beta \delta, \gamma}}\ar[d]_{u_{\beta, \delta \gamma}} & \varphi_{\delta^{-1}\beta^{-1}} (u_{  \gamma}) \\
\varphi_{\beta^{-1}} (u_{\delta \gamma}) \ar[r]^-{\varphi_{\beta^{-1}}  (u_{\delta, \gamma}) } & \varphi_{\beta^{-1}} \varphi_{\delta^{-1} } (u_{  \gamma})   \ar[u]_{\varphi_2(\beta^{-1} , \delta^{-1})_{u_\gamma}}.
}
\end{equation*}
\end{enumerate}

A \emph{$\bb$-precolored $G$-graph} is a $G$-graph  endowed with a $\bb$-precoloring. Disjoint unions of $\bb$-precolored $G$-graphs   are $\bb$-precolored $G$-graphs in the obvious way.

\subsection{Colorings}\label{sect-colorings-G-graphs}
A \emph{coupon-coloring} of a $\bb$-precolored $G$-graph $ ((M,\Omega),u)$ is a   function which assigns to each  track~$\gamma$ of  a  coupon~$c$   of~$\Omega$  a     morphism $v_\gamma$ in the category $\bb_{\mu_\gamma}\subset \bb$.
To state our requirements on $v_\gamma$, we need more terminology.  The   \emph{inputs}
(respectively, \emph{outputs}) of~$c$ are the endpoints of the arcs of~$\Omega$ lying on the bottom  (respectively, top)
base of~$c$. The   direction of  the bottom base induced by the orientation of~$c$ determines an order in the set of the inputs.
Let $m \geq 0$   be the number of inputs  of~$c$  and let ${e}_k$  be the arc  of~$\Omega$ incident to the $k$-th input for $k=1, \dots, m$. Set   $\varepsilon_k=+$    if ${e}_k$ is directed out of~$c$ at the $k$-th input and  $\varepsilon_k=-$ otherwise. Similarly,  the direction of the top base of~$c$ induced by the opposite orientation of~$c$ determines an order in the set of the outputs. Let $n\geq 0$   be the number of outputs  of~$c$ and let ${e}^l$ be the arc  of $\Omega$ incident to the $l$-th
output for $l=1, \dots, n$. Set $\varepsilon^l =+$     if ${e}^l$ is directed into~$c$ at the $l$-th output and $\varepsilon^l =-$   otherwise.  Recall  the parallel copy $\widetilde  c \subset \widetilde \Omega$ of~$c$  (see Section~\ref{Tracks}). Let $\gamma$ be a track of~$c$. Composing $\gamma$ with a   path in~$\widetilde  c$ leading   to the $k$-th input, we obtain a track $\rho_k$  of $ {e}_k$.   Composing~$\gamma$ with a   path in~$\widetilde  c$ leading  to the $l$-th output, we obtain  a track $\rho^l$  of $ {e}^l$.   Clearly,
$$
\mu_\gamma=\mu_{\rho_1}^{\varepsilon_1} \cdots  \mu_{\rho_m}^{\varepsilon_m} =
\mu_{\rho^1}^{\varepsilon^1} \cdots  \mu_{\rho^n}^{\varepsilon^n}  \in \pi_1(M \setminus \Omega,\gamma(0))
$$
where  $ \gamma(0)  \in (\partial M)_\bullet \cup \Omega_\bullet$   is the starting point of~$\gamma$.  Set
$$
u_\gamma=\bigotimes_{k=1}^m
u_{\rho_k}^{\varepsilon_k} \quad \text{and} \quad u^\gamma=\bigotimes_{l=1}^n u_{\rho^l}^{\varepsilon^l},
$$
where $X^+=X$ and $X^-=X^*$ for any object $X \in \bb$.
We require that:
\begin{enumerate}
\labeli
\item    $v_\gamma \in \Hom_{{\bb}}(u_\gamma,  u^\gamma)$ for any track $\gamma$ of any coupon  of~$\Omega$;
\item for any  detour~$\beta$ in $M \setminus \Omega$ composable with a track~$\gamma $  of a  coupon  of~$\Omega$, the following diagram  (using the notation above for the inputs and outputs of the coupon) commutes:
\begin{equation}
\begin{split}\label{weakiso---}
\xymatrix@R=1cm @C=1.7cm {
u_{\beta \gamma}=\bigotimes\limits_{k=1}^m
u_{\beta \rho_k}^{\varepsilon_k}  \ar[r]^-{\bigotimes\limits_{k=1}^m
u_{\beta, \rho_k}^{\varepsilon_k}}\ar[d]_{v_{\beta \gamma}  } &
\bigotimes\limits_{k=1}^m \varphi_{ \beta^{-1} }  (u_{ \rho_k}^{\varepsilon_k})    \ar[r]^-{(\varphi_{ \beta^{-1} })_m}
 &\varphi_{\beta^{-1}} (    u_{ \gamma } )  \ar[d]^-{  \varphi_{\beta^{-1}} (v_\gamma) }\\
u^{\beta \gamma}= \bigotimes\limits_{l=1}^n  u_{\beta\rho^l}^{\varepsilon^l}
 \ar[r]^{\bigotimes\limits_{l=1}^n  u_{\beta, \rho^l}^{\varepsilon^l}}
 &\bigotimes\limits_{l=1}^n  \varphi_{\beta^{-1}} (u_{\rho^l}^{\varepsilon^l})  \ar[r]^-{(\varphi_{\beta^{-1}})_n}
& \varphi_{\beta^{-1}}(     u^{\gamma}  ).}
\end{split}
\end{equation}
 Here  $u_{\beta, \gamma}^+= u_{\beta, \gamma} $ while  $u_{\beta,
\gamma}^- \colon  u_{\beta \gamma}^* \to \varphi_{\beta^{-1}}
(u_\gamma^*)$  and $(\varphi_{\beta^{-1}})_m$ are defined in Section~\ref{sect-crossed-graded-cat}.
\end{enumerate}
For   $m=n=1$ and $\varepsilon_1=\varepsilon^1=+$,
the diagram \eqref{weakiso---} simplifies to
 \begin{equation*}\begin{split}\label{weakiso---next}\xymatrix@R=1cm @C=1.7cm {
 u_{\beta \rho_1}  \ar[r]^-{u_{\beta, \rho_1}} \ar[d]_{v_{ \beta \gamma}  } &
  \varphi_{\beta^{-1}} (    u_{ \rho_1} )  \ar[d]^-{  \varphi_{\beta^{-1}} (v_\gamma) }\\
 u_{\beta \rho^1}
 \ar[r]^{u_{\beta, \rho^1} \,\,\,\,\,\,\,\,\,\,\,\,\,}
 &      \varphi_{\beta^{-1}}(    u_{  \rho^1}  ).
}\end{split}\end{equation*}

A \emph{$\bb$-coloring} of a $G$-graph $(M,\Omega)$ is a pair $(u,v)$ where~$u$ is a $\bb$-precoloring  of~$(M,\Omega)$ and~$v$ is  a coupon-coloring of $((M,\Omega),u)$.
A \emph{$\bb$-colored $G$-graph} is a $G$-graph endowed with a $\bb$-coloring. Disjoint unions of $\bb$-colored $G$-graphs   are $\bb$-colored $G$-graphs in the obvious way.

\subsection{Stabilization and conjugation}\label{sect-stabilization}
We define two  operations on a $\bb$-colored   $G$-graphs  called    \emph{stabilization} and \emph{conjugation}.   To stabilize  a $\bb$-colored   $G$-graph $(M, \Omega,g)$, we insert a small coupon~$c$ in the middle of an arc of $\Omega$,   keeping the rest of the data. We take as the bottom base of~$c$ one of its sides incident to the arc. We insert $c$ so that it is transverse to the framing $\nu$ at the middle point of the arc, and provide $c$ with the constant framing equal to $\nu$:
$$
\psfrag{B}[Bl][Bl]{\scalebox{.9}{$\Omega$}}
\psfrag{X}[Bl][Bl]{\scalebox{.9}{$\Omega'$}}
\psfrag{n}[Bl][Bl]{\scalebox{.9}{$\nu$}}
\psfrag{c}[Bl][Bl]{\scalebox{.9}{$c$}}
\rsdraw{.45}{1}{stabil5}
$$
This gives a   $G$-graph $(M, \Omega',g')$, where  the class of maps~$g'$ is the   restriction of~$g$ via the inclusion $M\setminus \Omega' \subset M\setminus \Omega$. The given $\bb$-precoloring~$u$ of~$\Omega$ restricts to a $\bb$-precoloring~$u'$ of~$\Omega'$  via the same  inclusion.
The given coupon-coloring  of~$\Omega$ similarly restricts to a  coupon-coloring  of all coupons  of~$\Omega'$ except~$c$.
We color any track~$\gamma'$ of~$c$ with  the identity morphism of the object $u'_{\gamma'}=(u')^{\gamma'}$.
Note that this object is computed by $u_\gamma^\varepsilon$, where $\gamma$  is the track of the original arc of~$\Omega$ determined by $\gamma'$ and $\varepsilon=+$ if the arc incident to the  bottom base of~$c$ is directed out of~$c$ while $\varepsilon=-$ otherwise.

The conjugation by any $\kappa\in G$ of a $G$-graph $(M, \Omega,g)$ produces  a $G$-graph $(M, \Omega,g)^\kappa=(M, \Omega,g^\kappa)$, where $g^\kappa$ is the composition of $g$ with the homotopy class of maps
$(\X, x)\to (\X, x)$ representing the endomorphism of  $G=\pi_1(\X,\x)$ carrying each $\alpha\in G$ to $\kappa^{-1} \alpha \kappa\in G$. This transformation lifts to $\bb$-colored $G$-graphs as follows.
The  precoloring~$u$ of $(M, \Omega,g)$ induces a precoloring $u^\kappa$ of $(M, \Omega,g)^\kappa$ which carries every track~$\gamma$ of an arc of~$\Omega$ to
$(u^\kappa)_\gamma=\varphi_\kappa (u_\gamma)$
and carries every pair~$(\beta, \gamma)$, where $\beta$ is a detour in $M\backslash \Omega$ and $\gamma$ is a track of an arc of~$\Omega$ composable with~$\beta$,  to  the isomorphism $(u^\kappa)_{\beta , \gamma}$ defined as the following composition:
$$
\xymatrix@R=1cm @C=3.3cm {
(u^\kappa)_{\beta \gamma}=\varphi_\kappa (u_{ \beta \gamma}) \ar@{.>}[d]_-{(u^\kappa)_{\beta , \gamma}} \ar[r]^-{\varphi_\kappa (u_{\beta, \gamma})} & \varphi_\kappa \varphi_{\beta^{-1}} (u_{  \gamma})  \ar[d]^-{  \varphi_2( \kappa, \beta^{-1}) }\\
\varphi_{\kappa^{-1} \beta^{-1} \kappa}  ((u^\kappa)_\gamma)= \varphi_{\kappa^{-1} \beta^{-1} \kappa}   \varphi_\kappa (
u_\gamma)
& \ar[l]_-{\varphi_2( \kappa^{-1} \beta^{-1} \kappa, \kappa )^{-1}} \varphi_{\beta^{-1} \kappa} (u_{  \gamma}).}
$$
Here, the element of $G=\pi_1(\X,\x)$ represented by the image under $g$ of the detour~$\beta$ is denoted (as above) by the same letter $\beta$, so that the element of $G$ represented by the image under $g^\kappa$ of the detour $\beta$ is indeed computed by $\kappa^{-1} \beta \kappa$.
The coupon-coloring $v^\kappa$ of $((M, \Omega,g)^\kappa, u^\kappa)$ is similarly induced by the coupon-coloring~$v$ of $((M, \Omega,g), u)$ and  the isomorphisms  $(\varphi_\kappa)_n$ and $\varphi^1_\kappa$ associated with~$\varphi$ (see Section~\ref{sect-crossed-graded-cat}). More explicitly, if $\gamma$ is a track of  a  coupon of~$\Omega$ and using the notation of Section~\ref{sect-colorings-G-graphs}, then the morphism $(v^\kappa)_\gamma$ is defined as the following composition:
$$
\xymatrix@R=1cm @C=2.3cm {
(u^\kappa)_\gamma =\bigotimes\limits_{k=1}^m
\varphi_\kappa(u_{\rho_k})^{\varepsilon_k}  \ar@{.>}[d]_-{(v^\kappa)_\gamma} \ar[r]^-{\bigotimes\limits_{k=1}^m
\psi(u_{\rho_k},\varepsilon_k)^{-1}} &
\bigotimes\limits_{k=1}^m \varphi_{ \kappa }  (u_{ \rho_k}^{\varepsilon_k})    \ar[r]^-{(\varphi_{ \kappa })_m}
 &\varphi_{\kappa} (    u_{ \gamma } )  \ar[d]^-{  \varphi_{\kappa} (v_\gamma) }\\
(u^\kappa)^\gamma = \bigotimes\limits_{l=1}^n  \varphi_\kappa (u_{\rho^l})^{\varepsilon^l}
 &  \bigotimes\limits_{l=1}^n  \varphi_{\kappa} (u_{\rho^l}^{\varepsilon^l})
 \ar[l]_-{\bigotimes\limits_{l=1}^n   \psi{(u_{\rho^l},\varepsilon^l)}}
& \ar[l]_-{(\varphi_{\kappa})_n^{-1}} \varphi_{\kappa}(     u^{\gamma}  ).}
$$
Here, for an object $X\in\bb$, we set $\psi(X,+)=\id_{\varphi_\kappa(X)}$ and $\psi(X,-)=\varphi_\kappa^1(X)$.

Two key properties of this transformation will be stated in Section~\ref{sect-Construction-of-precolorings2}.

\section{Isomorphisms and constructions of colorings}\label{sec-coloredgraphs2}

In this  section, $\bb$ is a  $G$-crossed   category  over $\kk$.

\subsection{Isomorphisms of  $G$-graphs}\label{G-graphs2}
An  \emph{isomorphism} between  $G$-graphs $(M, \Omega, g)$ and $(M', \Omega', g')$    is an orientation-preserving diffeomorphism $f\co M  \to M'$ which carries $(\partial M)_\bullet$ onto $(\partial M')_\bullet$, $\Omega_\bullet$ onto~$\Omega'_\bullet$, and~$\Omega$ onto $\Omega'$ (preserving the strata,  the orientation, and the framing)
and satisfies
$$
g = g'f \co (M \setminus \Omega, (\partial M)_\bullet \cup \Omega_\bullet) \to (\X,\x).
$$
Composing~$f$ with a track~$\gamma$ of a stratum  of~$\Omega$, we obtain a track $f \gamma$ of the corresponding stratum of~$\Omega'$. Similarly, composing~$f$ with a detour in $M \setminus \Omega$, we obtain a detour $f \beta$ in $M' \setminus \Omega'$.

An \emph{isomorphism} between  $\bb$-precolored $G$-graphs  $((M,\Omega)  ,    u   )$ and $((M',\Omega')  ,    u'   )$ is a pair $(f,w)$ where $f\co M \to M'$  is an isomorphism  between the $G$-graphs $(M,\Omega) , (M',\Omega' )$  and~$w$ is a function which assigns to every track $\gamma$ of an arc of~$\Omega$    an isomorphism $w_\gamma\co u_\gamma \to u'_{f\gamma}$ in $\bb$ so that for any such~$\gamma$ and any detour~$\beta $ in $M\setminus \Omega$ composable with~$\gamma$,   the following diagram   commutes:
\begin{equation*}
\begin{split}\label{condisom4}\xymatrix@R=1cm @C=3cm {
u_{ \beta  \gamma} \ar[r]^-{u_{  \beta  , \gamma}}\ar[d]_{ w_{\beta\gamma} } & \varphi_{  \beta^{-1} } (u_{  \gamma}) \ar[d]^-{\varphi_{\beta^{-1}}(w_{   \gamma})}\\
u'_{  f(\beta   \gamma)}=u'_{ (f\beta)(f  \gamma)} \ar[r]^-{u'_{  f\beta  , f\gamma}} & \varphi_{\beta^{-1}}  (u'_{ f \gamma}).
}
\end{split}
\end{equation*}

An \emph{isomorphism} between   $\bb$-colored $G$-graphs $(  (M,\Omega) , u,v  )$ and $((M',\Omega') ,   u',v'  )$   is   an isomorphism $(f,w)$ between the underlying $\bb$-precolored $G$-graphs $((M,\Omega)  , u   )$ and $((M',\Omega'),    u'   )$ such that for each  track  $\gamma $ of a coupon of~$\Omega$, the   diagram
\begin{equation*}
\begin{split}\label{condisom5}\xymatrix@R=1cm @C=3cm {
u_{\gamma}  \ar[r]^-{v_{   \gamma}}\ar[d]_{\bigotimes\limits_{i=1}^m w_{\gamma_i}^{\varepsilon_i}} & u^{\gamma} \ar[d]^-{\bigotimes\limits_{j=1}^n w_{\gamma^j}^{\varepsilon^j}}\\
u'_{f\gamma} \ar[r]^-{v'_{    f\gamma}} & (u')^{f\gamma}
} \end{split}
\end{equation*}
commutes. (Here, we use the same notation as in Section~\ref{sect-colorings-G-graphs}.)  Since the vertical arrows are isomorphisms, the commutativity of this diagram implies that~$v'$ is uniquely determined by~$v$ and the isomorphism $(f,w)$.

\subsection{Strong isomorphisms}\label{sect-Construction-of-precolorings2}\label{sect-contruct-colorings2}    A \emph{strong isomorphism} between  two $\bb$-precolorings $u, u'$ of a same $G$-graph $(M,\Omega)$ is
a  function  $w$ which assigns to every track $\gamma$ of an arc of~$\Omega$    an isomorphism $w_\gamma \co u_\gamma \to u'_{\gamma}$ in~$\bb$ such that the pair $(\id_M\co M\to M,w)$  is  an isomorphism  between   the $\bb$-precolored $G$-graphs $(  (M,\Omega) , u  )$ and $((M,\Omega) ,   u'  )$.

A \emph{strong isomorphism} between  two  $\bb$-colorings $(u,v)$ and $(u',v')$ of a $G$-graph $(M,\Omega)$ is a strong isomorphism~$w$ between the  $\bb$-precolorings $u, u'$  of $(M,\Omega)$   such that   the pair $(\id_M\co M \to M,w)$  is  an isomorphism  between   the $\bb$-colored $G$-graphs $(  (M,\Omega) , u,v  )$ and $((M,\Omega) ,   u' ,v' )$.

The notion of a  strong isomorphism allows us  to formulate two  key properties of the conjugation of a $\bb$-colored $G$-graph $((M, \Omega),u,v)$, see Section~\ref{sect-stabilization}. Namely, the $\bb$-colored $G$-graph $((M, \Omega),u,v)^1$  obtained via conjugation by $1 \in G$ is strongly isomorphic to the original $\bb$-colored $G$-graph $((M, \Omega),u,v)$.
Also, for any $\kappa, \kappa' \in G$, the $\bb$-colored $G$-graphs
$$
((M, \Omega), u,v)^{\kappa\kappa'} \quad \text{and} \quad  (((M, \Omega), u,v)^{\kappa})^{\kappa'}
$$
are strongly isomorphic. We leave the proofs of these claims to the reader.

\subsection{Systems of tracks}\label{sect-contruct-colorings}
A \emph{1-system of tracks} for a $G$-graph $(M,\Omega)$
is a family $\{\gamma_e\}_{e}$  where~$e$ runs over the arcs of~$\Omega$ and $\gamma_e$ is a track of~$e$.
To exhibit  such a system we just pick one track for every arc of~$\Omega$
(this is  possible since every  component of~$M$ meeting~$\Omega$ also meets $(\partial M)_\bullet \cup \Omega_\bullet$).
An arbitrary  family of objects  $\{u_e\in \bb_{\mu_{\gamma_e}}\}_{e}$ determines   a $\bb$-precoloring $u$ of $(M,\Omega)$ as follows.
Each track $\gamma$ of an arc $e$ of~$\Omega$  expands uniquely as $\gamma=\alpha \gamma_e$ where~$\alpha$ is a detour   in $M \setminus \Omega$.   Set
$$
u_\gamma =\varphi_{\alpha^{-1} } (u_e)  \in {{\bb}}_{\alpha \mu_{\gamma_e} \alpha^{-1}}={{\bb}}_{\mu_\gamma}.
$$
Note that $u_{\gamma_e}=\varphi_{1 } (u_{ e})$ is canonically isomorphic to $u_e$.
For any detour~$\beta$ in $M \setminus \Omega$  such that~$\gamma $ is  composable with~$\beta$, set
$$
u_{\beta, \gamma}=(\varphi_2(\beta^{-1}, \alpha^{-1})_{u_e})^{-1}  \colon
u_{\beta \gamma}=\varphi_{\alpha^{-1} \beta^{-1}} (u_e) \to \varphi_{\beta^{-1}} \varphi_{\alpha^{-1} } (u_{e})= \varphi_{\beta^{-1}} (u_\gamma).
$$
\begin{lem}\label{lem-precolorings-G-graphs-in}
The  objects and  isomorphisms defined above form a $\bb$-precoloring~$u$ of~$(M,\Omega)$ which only depends (up to strong isomorphism) on the isomorphism classes of the   objects  $\{u_e\}_{e}$.
\end{lem}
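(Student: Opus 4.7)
The plan is to verify the two precoloring axioms directly from the formulas $u_\gamma = \varphi_{\alpha^{-1}}(u_e)$ and $u_{\beta,\gamma} = (\varphi_2(\beta^{-1},\alpha^{-1})_{u_e})^{-1}$, where $\gamma = \alpha \gamma_e$ is the unique expansion of $\gamma$ through the fixed 1-system of tracks. All the substance will come from the unit and associativity coherences of the strong monoidal functor $\varphi \co \overline{G} \to \Aut(\cc)$ together with the naturality of $\varphi_2$. The strong-isomorphism statement will then be verified by a parallel naturality argument.

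For axiom (i), the constant detour $\beta$ at the starting point of $\gamma$ represents $1 \in G$, and the expansion of $\beta \gamma$ through the 1-system coincides with that of $\gamma$. The required equality $u_{\beta,\gamma} = (\varphi_0)_{u_\gamma}$ then reduces to
$$
\bigl(\varphi_2(1,\alpha^{-1})_{u_e}\bigr)^{-1} = (\varphi_0)_{\varphi_{\alpha^{-1}}(u_e)},
$$
which is precisely the left unit axiom of the strong monoidal functor $\varphi$ applied at the object $u_e$.

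For axiom (ii), given composable detours $\beta,\delta$ and a track $\gamma = \alpha\gamma_e$, I expand $\delta\gamma = (\delta\alpha)\gamma_e$ and $\beta\delta\gamma = (\beta\delta\alpha)\gamma_e$ and rewrite each of the four arrows of the hexagon using the defining formulas. Inverting and cancelling common factors, the commutativity amounts to the identity
$$
\varphi_2(\delta^{-1}\beta^{-1},\alpha^{-1})_{u_e} \circ \varphi_2(\beta^{-1},\delta^{-1})_{\varphi_{\alpha^{-1}}(u_e)} = \varphi_2(\beta^{-1},\alpha^{-1}\delta^{-1})_{u_e} \circ \varphi_{\beta^{-1}}\bigl(\varphi_2(\delta^{-1},\alpha^{-1})_{u_e}\bigr),
$$
which is the associativity hexagon for $\varphi$ applied to the triple $(\beta^{-1},\delta^{-1},\alpha^{-1})$ in $\overline{G}$ and evaluated at $u_e$.

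For the strong-isomorphism assertion, given isomorphisms $f_e \co u_e \to u'_e$ in $\bb$ between two families giving rise to precolorings $u, u'$, define $w_\gamma = \varphi_{\alpha^{-1}}(f_e) \co u_\gamma \to u'_\gamma$ for $\gamma = \alpha\gamma_e$. The compatibility condition $u'_{\beta,\gamma} \circ w_{\beta\gamma} = \varphi_{\beta^{-1}}(w_\gamma) \circ u_{\beta,\gamma}$ reduces, after substituting the defining formulas, to the naturality of the natural isomorphism $\varphi_2(\beta^{-1},\alpha^{-1})$ in its argument evaluated at $f_e$. The main obstacle is purely notational: keeping the unique factorizations $\gamma = \alpha\gamma_e$ and $\beta\gamma = (\beta\alpha)\gamma_e$ aligned with the element of $G$ represented by each detour via the reference map. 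Once the bookkeeping is in place, each verification reduces to a single instance of a coherence or naturality diagram for~$\varphi$.
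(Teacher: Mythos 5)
Your proof is correct and follows essentially the same approach as the paper's: axiom (i) reduces to the unit coherence $\bigl(\varphi_2(1,\alpha^{-1})_{u_e}\bigr)^{-1}=(\varphi_0)_{\varphi_{\alpha^{-1}}(u_e)}$, axiom (ii) reduces to the associativity coherence of $\varphi_2$ at the triple $(\beta^{-1},\delta^{-1},\alpha^{-1})$, and the strong-isomorphism claim follows from naturality of $\varphi_2$, all exactly as the paper cites. The paper states these reductions as a ``direct consequence'' of the cited axioms; you have simply spelled out the bookkeeping.
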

\begin{proof}
That $u$ is a $\bb$-precoloring is a direct consequence of the following axioms of the crossing $\varphi$ (see  \cite{TVi4}): $\varphi_2(1,a)_X=(\varphi_0)_{\varphi_a(X)} $ and
$$
\varphi_2(ba,c)_X \, \varphi_2(a,b)_{\varphi_c(X)}= \varphi_2(a,cb)_X \, \varphi_a(\varphi_2(b,c)_X)
$$
for all $a,b,c \in G$ and all object $X$ of~$\bb$.

Next, consider a family of isomorphisms $\{\phi_e \co u_e \to u'_e\}_{e}$ in~$\bb$ and denote by $u'$ the $\bb$-precoloring of~$(M,\Omega)$ derived  as above from the family   $\{u'_e\}_{e}$. For a track $\gamma$ of an arc $e$ of~$\Omega$,  expand $\gamma=\alpha \gamma_e$ as above
and set
$$
w_\gamma= \varphi_{\alpha^{-1} } (\phi_e) \co u_\gamma =\varphi_{\alpha^{-1} } (u_e) \to \varphi_{\alpha^{-1} } (u'_e)=u'_\gamma.
$$
It follows directly from the naturality of $\varphi_2$ that the function $\gamma\mapsto w_\gamma$ is a strong isomorphism   between the $\bb$-precolorings $u, u'$ of $(M,\Omega)$.
\end{proof}

The following lemma shows that this construction  yields all $\bb$-precolorings of $(M,\Omega)$, at least up to strong isomorphism.

\begin{lem}\label{lem-precolorings-G-graphs} Any $\bb$-precoloring $U$  of $(M,\Omega) $ is strongly isomorphic to the $\bb$-precoloring   $u$ of $(M,\Omega) $ determined by the family of objects $\{u_e=U_{\gamma_e}\}_{e}$.
\end{lem}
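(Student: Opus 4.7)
The plan is to exhibit an explicit strong isomorphism $w$ from~$U$ to~$u$ by reusing the reindexing data that already comes packaged with~$U$. Fix a track~$\gamma$ of an arc~$e$ of~$\Omega$. As in the construction of Lemma~\ref{lem-precolorings-G-graphs-in}, write $\gamma = \alpha \gamma_e$ for the (unique) detour~$\alpha$ in $M\setminus\Omega$. By definition $u_\gamma = \varphi_{\alpha^{-1}}(u_e) = \varphi_{\alpha^{-1}}(U_{\gamma_e})$. On the other hand, the precoloring~$U$ comes equipped with the isomorphism
$$
U_{\alpha,\gamma_e} \co U_{\alpha\gamma_e}=U_\gamma \;\longrightarrow\; \varphi_{\alpha^{-1}}(U_{\gamma_e})=u_\gamma,
$$
which has precisely the right source and target. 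I would therefore set $w_\gamma := U_{\alpha,\gamma_e}$ for every track $\gamma$ of every arc of~$\Omega$. Note that axiom~(i) of a precoloring, applied when $\alpha$ is the constant detour at $\gamma_e(0)$, forces $w_{\gamma_e}=(\varphi_0)_{u_e}$, matching the canonical identification $u_e \cong u_{\gamma_e}=\varphi_1(u_e)$ already built into the construction of~$u$.

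The nontrivial point is to check that for every detour $\beta$ composable with $\gamma$, the square
$$
\xymatrix@R=.8cm @C=1.8cm{
U_{\beta\gamma} \ar[r]^-{U_{\beta,\gamma}} \ar[d]_-{w_{\beta\gamma}} & \varphi_{\beta^{-1}}(U_\gamma) \ar[d]^-{\varphi_{\beta^{-1}}(w_\gamma)} \\
u_{\beta\gamma} \ar[r]^-{u_{\beta,\gamma}} & \varphi_{\beta^{-1}}(u_\gamma)
}
$$
commutes. Decomposing $\beta\gamma = (\beta\alpha)\gamma_e$, the left vertical arrow becomes $w_{\beta\gamma} = U_{\beta\alpha,\gamma_e}$, while by definition of~$u$ the bottom arrow is $u_{\beta,\gamma} = \varphi_2(\beta^{-1},\alpha^{-1})_{u_e}^{-1}$. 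Unraveling, the required identity is
$$
\varphi_2(\beta^{-1},\alpha^{-1})_{U_{\gamma_e}}^{-1} \circ U_{\beta\alpha,\gamma_e} \;=\; \varphi_{\beta^{-1}}(U_{\alpha,\gamma_e}) \circ U_{\beta,\alpha\gamma_e},
$$
which, after moving $\varphi_2(\beta^{-1},\alpha^{-1})_{U_{\gamma_e}}$ to the right-hand side, is exactly axiom~(ii) of the precoloring~$U$ applied to the composable detours $\beta$ and $\alpha$ and the track $\gamma_e$. Since each $w_\gamma$ is an isomorphism (being part of the data of~$U$), this will show that $w$ is a strong isomorphism from~$U$ to~$u$.

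I do not expect any genuine obstacle: the definitions have been arranged so that the lemma essentially \emph{is} precoloring axiom~(ii), transported along the chosen tracks $\{\gamma_e\}_e$. The only mildly delicate input is the unique-decomposition statement $\gamma=\alpha\gamma_e$, which is built into the definition of a track (as a homotopy class of paths in $M\setminus\Omega$ ending at~$\tilde e$) and was already used in Lemma~\ref{lem-precolorings-G-graphs-in}. Independence of the construction from the auxiliary choice of $\{\gamma_e\}_e$ is implicit: replacing $\gamma_e$ by another track $\gamma_e'=\delta_e\gamma_e$ changes $u_e$ by $U_{\delta_e,\gamma_e}$, and the above recipe transports $w$ accordingly by a strong isomorphism coming from the same axiom.
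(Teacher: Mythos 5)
Your proposal is correct and is essentially the paper's proof: you define $w_\gamma := U_{\alpha,\gamma_e}$ for the unique decomposition $\gamma=\alpha\gamma_e$ (exactly as the paper does), and your explicit check that the required square reduces to precoloring axiom~(ii) of~$U$ applied to $\beta$, $\alpha$, $\gamma_e$ is exactly the verification the paper leaves as "easy to check."
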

\begin{proof}
For a  track $\gamma$ of  an arc $e$ of~$\Omega$, expand $\gamma=\alpha \gamma_e$ as above  and set
$$
w_\gamma=U_{\alpha,\gamma_e}\co U_\gamma=U_{\alpha \gamma_e} \to \varphi_{\alpha^{-1}} (U_{\gamma_e})=u_\gamma.
$$
It is easy to check that the function $\gamma\mapsto w_\gamma$ is  a strong isomorphism  between the $\bb$-precolorings $U$ and $u$ of $(M,\Omega)$.
\end{proof}

A \emph{2-system of tracks} for a $G$-graph $(M,\Omega)$  is a family $\{\gamma_c\}_{c}$ where~$c$ runs over all coupons of~$\Omega$   and $\gamma_c$ is a track of~$c$. To exhibit  such a  system we just  fix   one track for every~$c$.
Given a  $\bb$-precoloring $u$  of $(M,\Omega)$, pick a 2-system of tracks
$\{\gamma_c\}_{c}$ for $(M,\Omega)$   and   an arbitrary family of morphisms
$\{v_c\colon u_{\gamma_c} \to u^{\gamma_c}\}_{c}$  in~$\bb$. (Here, the objects $u_{\gamma_c}$ and $u^{\gamma_c}$ are  as in Section~\ref{sect-colorings-G-graphs}). Each track of a coupon~$c$ of~$\Omega$ expands uniquely as $ \beta \gamma_c$ where~$\beta$ is a detour   in $M \setminus \Omega$.   Formula~\eqref{weakiso---} with $\gamma=\gamma_c$ and $v_\gamma=v_c$ defines a morphism
$$
v_{\beta \gamma_c} \colon u_{\beta \gamma_c} \to u^{\beta \gamma_c}
$$ in~$\bb$.
This yields a coupon-coloring $v$ of $((M,\Omega),u)$ and  a $\bb$-coloring $(u,v)$ of $(M,\Omega)$. Lemma~\ref{lem-precolorings-G-graphs} implies that all colorings of~$\Omega$ may be obtained in this way,  at least up to strong isomorphism.

\section{Colored $G$-surfaces}\label{sec-HQFTee}

In this section we introduce   markings and colorings of   $G$-surfaces.
We fix until the end of the section a  $G$-crossed category $\bb$ over $\kk$.

\subsection{Marked $G$-surfaces}\label{sect-marked-surfaces}
A   point of a surface   is \emph{marked} if it is endowed with a tangent direction   and a sign $\pm 1$. A   \emph{marked $G$-surface} is a triple  consisting of a pointed closed  oriented  surface~$\Sigma$, a finite set of marked points  $A \subset \Sigma \setminus \Sigma_\bullet$,  and a homotopy class of  maps
$g \co (\Sigma \setminus A, \Sigma_\bullet)\to (\X,\x)$. For a point  $a\in A$, we let $\varepsilon_a=\pm 1$ be its sign
 and let $\ast_a\in \Sigma \backslash A$ be the base point of the connected component of~$\Sigma$ containing~$a$.
Slightly pushing~$a $ in the given tangent direction we get a point $\tilde a \in \Sigma \setminus  A  $.   A \emph{track} of~$a$ is a homotopy class~$\gamma$ of   paths leading from   $\ast_a$ to $\tilde a$  in $\Sigma\setminus A$.
We let  $\mu_\gamma \in \pi_1(\Sigma \setminus A, \ast_a)$  be the element  represented
the loop $\gamma m_a^{\varepsilon_a} \gamma^{-1}$, where  $m_a$ is a small loop in $\Sigma \setminus A$ based in $\tilde a$ and encircling~$a$   in the direction induced by the orientation of~$\Sigma$ (i.e., $m_a$ is the boundary of disk embedded in $\Sigma \setminus (A\setminus\{a\})$ containing $a$ in its interior and $\tilde a$ in its boundary). Multiplication by loops   defines a left    action of the group $\pi_1(\Sigma \setminus A, \ast_a)$ on the set of tracks of~$a$. Clearly, $\mu_{\beta \gamma}= \beta \mu_\gamma \beta^{-1}$ for  any~$\beta \in \pi_1(\Sigma \setminus A, \ast_a)$. As above,  we  denote by the same letters the  elements of~$\pi_1(\Sigma \setminus A, \ast_a)$ and their images under the group homomorphism $ \pi_1(\Sigma \setminus A, \ast_a)  \to G$ induced by $g$.

An \emph{isomorphism} of  marked $G$-surfaces $(\Sigma, A, g  )$ and $(\Sigma', A', g'  )$ is an orien\-ta\-ti\-on-preserving diffeomorphism   $f\co \Sigma  \to \Sigma'$ which carries $\Sigma_\bullet$ onto $ \Sigma'_\bullet$ and  $A$ onto $A'$  (preserving the tangent directions and the signs of the marked points) and such that  $g = g' f$.

For brevity, we will often denote a marked $G$-surface  $(\Sigma, A, g  )$ by $(\Sigma, A  )$ or even by~$\Sigma$ suppressing~$g$ and~$A$.

\subsection{Colored $G$-surfaces}\label{sect-G-surf}
A \emph{$\bb$-coloring} $u$ of a marked $G$-surface $(\Sigma, A)$ comprises  two functions. The first function   assigns to every
track  $\gamma $ of any  $a\in A$   an object $u_\gamma\in\bb_{\mu_{\gamma}}$ called the \emph{color} of~$\gamma$.   The second function   assigns to  every   track  $\gamma $ of any   $a\in A$   and to every
$\beta\in \pi_1(\Sigma \setminus A ,\ast_a)$   an isomorphism
$$
u_{\beta, \gamma}\colon  u_{\beta \gamma} \to \varphi_{\beta^{-1}} (u_\gamma)
$$
such that:
\begin{enumerate}
\labeli
\item for every   track  $\gamma $,
$$u_{1,\gamma}=(\varphi_0)_{u_\gamma}\colon u_{ \gamma} \to \varphi_{1} (u_\gamma);$$
\item for every   track  $\gamma $ of any   $a\in A$ and for all $ \beta, \delta \in  \pi_1(\Sigma \setminus A ,\ast_a )$,
the following diagram   commutes:
\begin{equation*}
\xymatrix@R=1cm @C=3cm {
u_{ \beta \delta \gamma} \ar[r]^-{u_{  \beta \delta, \gamma}}\ar[d]_{u_{\beta, \delta \gamma}} & \varphi_{\delta^{-1}\beta^{-1}} (u_{  \gamma}) \\
\varphi_{\beta^{-1}} (u_{\delta \gamma}) \ar[r]^-{\varphi_{\beta^{-1}}  (u_{\delta, \gamma}) } & \varphi_{\beta^{-1}} \varphi_{\delta^{-1} } (u_{  \gamma})   \ar[u]_{\varphi_2(\beta^{-1} , \delta^{-1})_{u_\gamma}}.
}
\end{equation*}
\end{enumerate}

  A \emph{$\bb$-colored $G$-surface}  is a marked $G$-surface endowed with a $\bb$-coloring.
Disjoint unions of $\bb$-colored $G$-surfaces   are $\bb$-colored $G$-surfaces   in the obvious way.
Reversing   the orientation of the ambient surface and   the signs of all marked points (while keeping the colors of the tracks),   we transform a $\bb$-colored $G$-surface   $\Sigma $ into  the \emph{opposite}  $\bb$-colored $G$-surface $-\Sigma $.

An  \emph{isomorphism} between
$\bb$-colored $G$-surfaces $(\Sigma, A,  u )$ and $(\Sigma', A',  u' )$ is a pair $(f,F)$ consisting of an isomorphism $f\co\Sigma \to \Sigma'$ of the underlying marked $G$-surfaces   and a  function~$F$ which assigns to every track $\gamma$ of any $a\in A$  an isomorphism $F_\gamma\co u_\gamma \to u'_{f\gamma}$ in~$\bb$ such that, for all  $\beta\in \pi_1(\Sigma \setminus A, \ast_a)$, the following diagram   commutes:
\begin{equation*}\begin{split}\label{condisom2}\xymatrix@R=1cm @C=3cm {
u_{ \beta  \gamma} \ar[r]^-{u_{  \beta  , \gamma}}\ar[d]_{F_{\beta\gamma}} & \varphi_{ \beta^{-1}} (u_{  \gamma}) \ar[d]^-{\varphi_{\beta^{-1}}(F_{   \gamma})}\\
u'_{  f(\beta   \gamma)}=u'_{ (f\beta)(f  \gamma)} \ar[r]^-{u'_{  f\beta  , f\gamma}} & \varphi_{\beta^{-1}}  (u'_{ f \gamma}).
} \end{split}\end{equation*}
Here $f\gamma $ is the track of   $f(a)\in A'$ obtained by composing~$\gamma$ and~$f$.

\subsection{Strong isomorphisms and systems of tracks}\label{{A construction of colorings1}}
Let $(\Sigma, A)$  be a marked $G$-surface. A \emph{strong isomorphism} between two  $\bb$-colorings $u,u'$ of  $(\Sigma, A)$ is a function~$F$ which assigns to every track $\gamma$ of any $a\in A$  an isomorphism $F_\gamma\co u_\gamma \to u'_{ \gamma}$ in~$\bb$ such that the pair $(\id_\Sigma\co\Sigma\to \Sigma, F)$ is an isomorphism between the
$\bb$-colored $G$-surfaces $(\Sigma, A,  u )$ and $(\Sigma, A,  u' )$.

A \emph{0-system of tracks} for $(\Sigma, A)$   is a family $\{\gamma_a\}_{a\in A}$ where $\gamma_a$ is a track of~$a$.
To produce such a system one just picks a track for each   point of~$A$ (this   is always possible since every component of~$\Sigma$ has a base point).

Given  a 0-system of tracks   $\{\gamma_a\}_{a\in A}$  for $(\Sigma, A)$, any    family
$\{u_a\in \bb_{\mu_{\gamma_a}} \}_{a\in A}$     determines   a $\bb$-coloring $u$ of $(\Sigma, A)$ as follows. Every track~$\gamma$ of any $a\in A$ expands uniquely as $\gamma=\alpha\gamma_a$ with $\alpha\in \pi_1(\Sigma \setminus A, \ast_a)$. Set
$$
u_\gamma=  \varphi_{\alpha^{-1} } (u_{ a}) \in {{\bb}}_{\alpha \mu_{\gamma_a} \alpha^{-1}}={{\bb}}_{\mu_\gamma}.
$$
In particular,  $u_{\gamma_a}=\varphi_{1 } (u_{ a})$ is canonically isomorphic to $u_a$.
For $\gamma=\alpha\gamma_a$ as above and any $\beta\in \pi_1(\Sigma \setminus A ,\ast_a)$,  set
$$
u_{\beta, \gamma}=(\varphi_2(\beta^{-1}, \alpha^{-1})_{u_a})^{-1}  \colon
u_{\beta \gamma}=\varphi_{\alpha^{-1} \beta^{-1}} (u_a) \to \varphi_{\beta^{-1}} \varphi_{\alpha^{-1} } (u_{ a})= \varphi_{\beta^{-1}} (u_\gamma).
$$

\begin{lem}\label{lem-coloringsofsurfaces-in}
The  objects and  isomorphisms above    form a $\bb$-coloring of~$(\Sigma, A)$ which only depends (up to strong isomorphism) on the isomorphism  classes of the   objects  $\{u_a\}_{a\in A}$. This construction yields all $\bb$-colorings of $(\Sigma, A)$   up to strong isomorphism: any $\bb$-coloring $U$  of $(\Sigma, A)$ is strongly isomorphic to the $\bb$-coloring  of $(\Sigma, A)$ determined  by the family of objects $\{U_{\gamma_a}\}_{a\in A}$.
\end{lem}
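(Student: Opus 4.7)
The plan is to mimic the proofs of Lemma~\ref{lem-precolorings-G-graphs-in} and Lemma~\ref{lem-precolorings-G-graphs}, replacing the role of detours and tracks in a 3-manifold complement by the loops in $\pi_1(\Sigma\setminus A,\ast_a)$ and the tracks of marked points. The statement splits naturally into three parts: (a)~the objects and isomorphisms defined via the $0$-system $\{\gamma_a\}$ form a genuine $\bb$-coloring; (b)~isomorphic families $\{u_a\}$ yield strongly isomorphic colorings; (c)~every $\bb$-coloring $U$ is strongly isomorphic to one built this way.

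For (a), I would first note that for any track $\gamma$ of $a\in A$ the unique expansion $\gamma=\alpha\gamma_a$ with $\alpha\in\pi_1(\Sigma\setminus A,\ast_a)$ exists because the action of $\pi_1(\Sigma\setminus A,\ast_a)$ on tracks of~$a$ is free and transitive. Condition~(i) of a $\bb$-coloring, namely $u_{1,\gamma}=(\varphi_0)_{u_\gamma}$, reduces after expanding $\gamma=\alpha\gamma_a$ to the crossing identity $\varphi_2(1,\alpha^{-1})_{u_a}=(\varphi_0)_{\varphi_{\alpha^{-1}}(u_a)}$ recalled in the proof of Lemma~\ref{lem-precolorings-G-graphs-in}. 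Condition~(ii) amounts, after expanding $\gamma=\alpha\gamma_a$ and using the definitions of $u_{\beta\delta,\gamma}$, $u_{\beta,\delta\gamma}$ and $u_{\delta,\gamma}$, to the cocycle relation
\[
\varphi_2(ba,c)_X\,\varphi_2(a,b)_{\varphi_c(X)}=\varphi_2(a,cb)_X\,\varphi_a(\varphi_2(b,c)_X)
\]
applied with $a=\beta^{-1}$, $b=\delta^{-1}$, $c=\alpha^{-1}$, and $X=u_a$. This is exactly the axiom of the crossing~$\varphi$.

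For (b), given isomorphisms $\phi_a\colon u_a\iso u'_a$, I would set $F_\gamma=\varphi_{\alpha^{-1}}(\phi_a)$ for $\gamma=\alpha\gamma_a$. The compatibility square with $u_{\beta,\gamma}$ and $u'_{\beta,\gamma}$ then follows from the naturality of $\varphi_2(\beta^{-1},\alpha^{-1})$ applied to $\phi_a$. Hence $F$ is a strong isomorphism between the two colorings.

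For (c), given a $\bb$-coloring $U$ of $(\Sigma,A)$, I would set $u_a=U_{\gamma_a}$, let $u$ be the coloring constructed from this family, and define $F_\gamma=U_{\alpha,\gamma_a}\colon U_\gamma=U_{\alpha\gamma_a}\to\varphi_{\alpha^{-1}}(U_{\gamma_a})=u_\gamma$. Axiom (i) of $U$ ensures $F_{\gamma_a}=(\varphi_0)_{u_a}$, and the commutativity condition required of $F$ for a detour $\beta$ translates, after expanding $\gamma=\alpha\gamma_a$ and $\beta\gamma=(\beta\alpha)\gamma_a$, into a diagram that is precisely axiom (ii) of the $\bb$-coloring $U$ applied to the composable pair $(\beta,\alpha,\gamma_a)$, combined with the definition of $u_{\beta,\gamma}$ as an inverse of $\varphi_2(\beta^{-1},\alpha^{-1})_{u_a}$. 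Since each $U_{\alpha,\gamma_a}$ is an isomorphism, $F$ is a strong isomorphism, proving (c).

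The routine obstacle is purely bookkeeping: matching the two ways to compose $\varphi_2$'s in the pentagon corresponding to axiom (ii), which is handled entirely by the cocycle identity above. No topological input beyond the free transitive action of $\pi_1(\Sigma\setminus A,\ast_a)$ on tracks of~$a$ is needed, so the argument is essentially the surface-level translation of Lemmas~\ref{lem-precolorings-G-graphs-in} and~\ref{lem-precolorings-G-graphs}.
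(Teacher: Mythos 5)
Your proof is correct and takes essentially the same approach as the paper, which merely states that the lemma "is an analogue of Lemmas~\ref{lem-precolorings-G-graphs-in},~\ref{lem-precolorings-G-graphs} and is proven similarly." You correctly transcribe the roles of detours and arc-tracks to loops in $\pi_1(\Sigma\setminus A,\ast_a)$ and marked-point tracks, and the verifications via the crossing axioms, naturality of $\varphi_2$, and axiom~(ii) of the coloring $U$ are exactly the intended computations.
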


Lemma~\ref{lem-coloringsofsurfaces-in}  is an analogue of Lemmas~\ref{lem-precolorings-G-graphs-in},~\ref{lem-precolorings-G-graphs}  and is  proven similarly.

\subsection{Surfaces versus  graphs}\label{sect-precol-surf-from-graphs}\label{sect-cyl}  Colored  $G$-surfaces and $G$-graphs
are related in two ways:  the boundary of any precolored $G$-graph is  a colored $G$-surface and  the cylinder over any colored $G$-surface  is  a  colored $G$-graph.   Here are more details.
Consider a  $\bb$-precolored $G$-graph $((M, \Omega), u)$.  We endow the pointed oriented  surface $\partial M$ with the set of  marked points $\partial \Omega=\partial M \cap \Omega$ where the distinguished tangent direction  at  any  $  a\in \partial   \Omega$ is induced by the framing of~$\Omega$ at~$a$ and the sign of~$a$  is~$+$ if the adjacent arc   of~$\Omega$  is directed  inside~$M$  and~$-$ otherwise.  The given homotopy class of  maps
$(M \setminus \Omega, (\partial M)_\bullet \cup \Omega_\bullet) \to (\X,\x)$  restricts to  a  homotopy class of maps   $(\partial M \setminus \partial   \Omega, (\partial M)_\bullet) \to (\X,\x)$. This turns $ (\partial M, \partial \Omega)$ into a marked $G$-surface.
We   define its $\bb$-coloring $\partial u$. Each track~$\gamma$ of a point $a\in \partial   \Omega $ in $\partial M$ determines through the inclusion $\partial M \hookrightarrow M$  a track, again denoted by $\gamma$,   of the  arc of $\Omega$ adjacent to~$a$.
Set  $(\partial u)_\gamma =u_{\gamma}\in \bb_{ \mu_\gamma }$ and   $(\partial u)_{\beta, \gamma}=u_{\beta,\gamma}$  for any   $\beta \in \pi_1(\partial M \setminus \partial \Omega, \ast_a)$  where $\ast_a$ is the only point of $(\partial M)_\bullet $ lying in the same   component of $\partial M$ as~$a$. This gives a  $\bb$-colored $G$-surface $(\partial M, \partial \Omega, \partial u)$. It is clear that (strongly) isomorphic $\bb$-precolorings  of    $(M, \Omega)$  yield in this way (strongly) isomorphic $\bb$-colored $G$-surfaces.

The cylinder construction starts from an arbitrary  $\bb$-colored $G$-surface $(\Sigma, A, u)$. Let $I=[0,1]$ be the unit segment  directed from~$0$ to~$1$. Consider the  3-manifold $C=\Sigma \times I$
with the product orientation and pointed  boundary where  $(\partial C)_\bullet=\Sigma_\bullet \times \{0,1\}$.
Then $\Omega=A\times I\subset C$
is a ribbon graph with arcs $\{a\times I\}_{a\in A}$ and no   coupons.  Each arc $a\times I$  carries a framing  which is a lift of the given tangent direction at~$a$. The arc $a\times I$ is directed   towards  $a\times \{0\}$  if the sign of~$a$ is $+$ and   towards  $a\times \{1\}$  otherwise.  Since~$C$ has no closed components,   $\Omega_\bullet=\emptyset$.
Composing the projection $\mathrm{pr} \colon C\setminus \Omega \to \Sigma \setminus A$ with the given homotopy class of maps $(\Sigma \setminus A, \Sigma_\bullet)\to (\X, x)$, we obtain a   homotopy class of maps
$
(C \setminus \Omega, (\partial C)_\bullet )\to (\X,\x)$.
This determines  a $G$-graph $(C, \Omega)$. Every  track~$\gamma$ of the arc $a\times I $ of $ \Omega$  projects to the track $\mathrm{pr}(\gamma)$ of~$a$ in~$\Sigma$, and we set
$
U_\gamma=u_{pr(\gamma)}\in \bb_{\mu_\gamma}
$.
If~$\gamma$   is composable with a detour~$\beta$   in $C\setminus \Omega$, consider   $\mathrm{pr}  (\beta)\in \pi_1(\Sigma \setminus A, \ast_a)$    and   set
$$
U_{\beta, \gamma}=u_{\mathrm{pr}(\beta), \mathrm{pr}( \gamma)}\co U_{\beta \gamma}=u_{\mathrm{pr}(\beta \gamma)} \to \varphi_{\beta^{-1}} (u_{\mathrm{pr}(\gamma)}) = \varphi_{\beta^{-1}} (U_\gamma).
$$
This defines a $\bb$-precoloring $U$ of $(C, \Omega)$. Since the ribbon graph $\Omega$ has no coupons,   the tuple  $ C_\Sigma =((C, \Omega), U)$ is  a $\bb$-colored $G$-graph. It is  called
 the \emph{cylinder} over  $(\Sigma,A,u)$.

\section{The category $\Cob^G_\bb$}\label{sect-cobgb}

For each $G$-crossed category $\bb$ over $\kk$, we define a symmetric monoidal category $  \Cob^G_\bb$  whose objects are  $\bb$-colored   $G$-surfaces  and whose morphisms are certain equivalence classes of  $\bb$-colored $G$-graphs.

\subsection{Objects and morphisms}\label{EQFTs versus HQFTs+++}
The objects of  $\Cob^G_\bb$  are   $\bb$-colored   $G$-surfaces (possibly, empty). For $\bb$-colored $G$-surfaces $\Sigma_0$, $\Sigma_1$, a morphism   $\Sigma_0 \to \Sigma_1$ in $\Cob^G_\bb$ is represented by a triple $(M,\Omega,h)$ consisting of a  $\bb$-colored $G$-graph $(M,\Omega)$  and an isomorphism of $\bb$-colored $G$-surfaces
$$
h\co (-\Sigma_0) \sqcup \Sigma_1 \to \partial M=(\partial M,  {\partial \Omega}).
$$
We   call  such a triple  $(M,\Omega,h)$ a \emph{$\bb$-colored   $G$-cobordism between $\Sigma_0$ and $\Sigma_1$}. Note that the image of $\Sigma_0$ (respectively, $\Sigma_1$) under~$h$  is  a union of several connected components of the  $\bb$-colored $G$-surface $\partial M $. We denote this image by  $\partial_-M$ (respectively,  $\partial_+ M$)  so that $\partial M$ is the disjoint union of the $\bb$-colored  $G$-surfaces $\partial_-M$ and $\partial_+ M$. The isomorphism~$h$ restricts to isomorphisms of  $\bb$-colored  $G$-surfaces $h_- \co -\Sigma_0\to \partial_-M$ and  $h_+ \co \Sigma_1\to \partial_+M$.

Two  $\bb$-colored   $G$-cobordisms between $\Sigma_0$ and $\Sigma_1$ represent the same morphism $\Sigma_0\to \Sigma_1$ in $\Cob^G_\bb$ if they are obtained from each other by a finite sequence of the following operations and their inverses:
\begin{enumerate}
\labeli
\item \emph{Isomorphism:} one picks an isomorphism $f\co  (M, \Omega) \to (M', \Omega')$   of $\bb$-colored $G$-graphs  and replaces   $(M,\Omega,h )$  with
    $$
    (M', \Omega', f h\co (-\Sigma_0) \sqcup \Sigma_1 \to (\partial M',  {\partial \Omega'})).
    $$
\item \emph{Stabilization:}
one  keeps $M, h$ and stabilizes $\Omega$ at an arc as in  Section~\ref{sect-stabilization}.
\item \emph{Conjugation:}
one  picks a closed component~$C$ of~$M$ meeting~$\Omega$ and conjugates $(C, \Omega \cap C)$ by an  element of the group~$G$  keeping the rest of $M , \Omega$ and  the isomorphism~$h$.

\end{enumerate}

\subsection{Composition of morphisms}\label{sect-def-compo-cob-G} Let $\Sigma_0, \Sigma_1, \Sigma_2$ be $\bb$-colored $G$-surfaces and let   $\chi_1\co \Sigma_0\to \Sigma_1$ and $\chi_2\co \Sigma_1\to \Sigma_2$   be   morphisms in $\Cob^G_\bb$ represented respectively by   $\bb$-colored   $G$-cobordisms  $(M_1,\Omega_1,h_1)$ and  $(M_2,\Omega_2,h_2)$.  The morphism  $\chi_2 \circ \chi_1$  is represented by the $\bb$-colored  $G$-cobordism     between  $ \Sigma_0$ and $ \Sigma_2$ defined as follows.  First, gluing  $M_1$ and $M_2$  along  the diffeomorphism
\begin{equation} \label{diff}
(h_2)_-   \circ (h_1)_+^{-1}  \colon  \partial_+(M_1) \to \partial_-(M_2)
\end{equation}
we obtain a 3-manifold $M$. The orientations of $M_1$ and $M_2$ determine an orientation of~$M$ so that   both  natural embeddings $j_1\co M_1 \hookrightarrow M$ and $j_2\co M_2 \hookrightarrow M$    are orientation preserving. Clearly,
$\partial M = \partial_- M  \sqcup  \partial_+ M$ where $\partial_- M =j_1(\partial_-(M_1))$ and $\partial_+ M = j_2(\partial_+(M_2))$. We endow   $\partial M$    with the set of base points
$$
(\partial M)_\bullet = j_1 ((\partial_-(M_1))_\bullet ) \cup j_2  ((\partial_+(M_2))_\bullet ).
$$
Thus,   $M$ is a compact  oriented 3-manifold with pointed boundary. Note  that  the  diffeomorphisms
$(h_1)_+\co \Sigma_1 \to \partial_+(M_1)$ and $(h_2)_-\co \Sigma_1 \to \partial_-(M_2)$ induce an embedding $ h \co\Sigma_1 \hookrightarrow M$ such that
$$
h(\Sigma_1)=j_1 (\partial_+(M_1) ) =j_2 (\partial_-(M_2)\bigr)\subset \Int(M).
$$

Since the map \eqref{diff}  is an isomorphism of $G$-surfaces, the endpoints of the arcs of $\Omega_1$, $\Omega_2$ in $\partial_+(M_1)$, $\partial_-(M_2)$ match under the gluing and so do the orientations and the framings of the adjacent arcs  of $\Omega_1$, $\Omega_2$. At each of these endpoints we insert in the union  $ j_1(\Omega_1) \cup j_2(\Omega_2)$ a small coupon with one input and one output as   in Section~\ref{sect-stabilization}.   The bottom   base of this coupon is its side lying in $j_1(M_1)$. The resulting set $\Omega\subset M$ contains  $ j_1(\Omega_1) \cup j_2(\Omega_2)$ and
expands as a  union of a finite number of arcs and coupons.  The coupons of~$\Omega$ are the images of the coupons of  $\Omega_1$, $\Omega_2$ under $j_1, j_2$ and the coupons added above at the points of the set $h((\Sigma_1)_\bullet) $. The arcs of~$\Omega$ are  the images of the arcs of $\Omega_1, \Omega_2$ under $j_1, j_2$ which are slightly shortened near $h((\Sigma_1)_\bullet)$. The framings of  $\Omega_1, \Omega_2$  extend to a framing of~$\Omega$ in the obvious way.
Clearly,~$\Omega$ is a ribbon graph in~$M$.

Each closed connected  component~$C$  of~$M$ either
lies in $j_i(M_i)$ for $i\in \{1,2\}$ or meets the surface $h(\Sigma_1)$ along several components.
For~$C $   meeting both $h(\Sigma_1)$ and~$\Omega$, pick any point $p_C  \in C \cap h((\Sigma_1)_\bullet)$. We endow~$\Omega$ with the set of base points
$$
\Omega_\bullet= j_1((\Omega_1)_\bullet) \cup j_2((\Omega_2)_\bullet)\cup \{p_C\}_C
$$
where $C$ runs over  closed components of~$M$ meeting both $h(\Sigma_1)$ and~$\Omega$.  This turns~$\Omega$ into
a pointed ribbon graph in~$M$.  Note the inclusion
\begin{equation}\label{eq-inclusion}
(\partial M)_\bullet \cup \Omega_\bullet \subset
j_1((\partial M_1)_\bullet \cup (\Omega_1)_\bullet) \cup j_2((\partial M_2)_\bullet \cup (\Omega_2)_\bullet).
\end{equation}

Since the map \eqref{diff}  is an isomorphism of $G$-surfaces, we can pick  representatives in the given homotopy classes of maps  \begin{equation}\label{reprs}
 \{(M_i \setminus \Omega_i, (\partial M_i)_\bullet \cup (\Omega_i)_\bullet) \to (\X, x)\}_{i=1,2}
\end{equation}
which match  under \eqref{diff}. These representatives  determine a map
\begin{equation}\label{eq-map-compos}
\bigl(M \setminus \Omega, \,\, \, j_1((\partial M_1)_\bullet \cup (\Omega_1)_\bullet) \cup j_2((\partial M_2)_\bullet \cup (\Omega_2)_\bullet) \bigr) \to (\X, x).
\end{equation}
In view of the inclusion \eqref{eq-inclusion},  the latter map   determines a homotopy class of maps
$$
(M \setminus \Omega, (\partial M)_\bullet \cup \Omega_\bullet)\to (\X, x).
$$
Since $\X= K(G,1)$, an elementary obstruction theory shows that this homotopy class does not depend on the choice of representatives of the homotopy classes~\eqref{reprs}.  In this way,   $(M,\Omega)$ becomes a  $G$-graph.

Let $(u^i,v^i)$ be the $\bb$-coloring of $(M_i, \Omega_i)$ for $i=1,2$.
 We use the method  of Section~\ref{sect-contruct-colorings} to derive from these colorings
a $\bb$-coloring $(u,v)$ of $(M, \Omega)$. For $i=1,2$, pick  a 1-system of tracks $\{\gamma^i_e\}_{e}$ for  $(M_i, \Omega_i)$  where~$e$ runs over the arcs of~$\Omega_i$.  Consider any track~$\gamma$  in $M \setminus \Omega$ of an arc of~$\Omega$. This arc lies in  $j_i(e)$ for some   $i=1,2$ and some  arc~$e$ of $\Omega_i$.  Then~$\gamma$ expands uniquely as the product $\gamma=\alpha\,  j_i (\gamma^i_e)$ where~$\alpha$ is a homotopy class of paths   in $M \setminus \Omega$ from   $\gamma(0) \in (\partial M)_\bullet \cup \Omega_\bullet$ to   $$ j_i(\gamma^i_e(0)) \in j_i((\partial M_i)_\bullet \cup (\Omega_i)_\bullet). $$
The  map \eqref{eq-map-compos} carries~$\alpha$   into
a homotopy class of loops  in $(\X ,\x)$   representing an  element of $G=\pi_1(\X ,\x)$  also denoted by~$\alpha$. Set
$$
u_\gamma= \varphi_{\alpha^{-1}}(u^i_{\gamma^i_e})\in {{\bb}}_{\alpha \mu_{\gamma^i_e} \alpha^{-1}}=\bb_{\mu_{\gamma}}
$$
For any detour~$\beta$   in $M \setminus \Omega$    composable with~$\gamma$,  set
$$
u_{\beta, \gamma}=\Bigl(\varphi_2(\beta^{-1}, \alpha^{-1})_{u^i_{\gamma^i_e}}\Bigr)^{-1}  \colon
u_{\beta \gamma}=\varphi_{\alpha^{-1} \beta^{-1}} \bigl(u^i_{\gamma^i_e}\bigr) \to \varphi_{\beta^{-1}} \varphi_{\alpha^{-1} } \bigl(u^i_{\gamma^i_e}\bigr)= \varphi_{\beta^{-1}} (u_\gamma).
$$
This defines a $\bb$-precoloring~$u$ of $(M,\Omega)$.
We now  define a coupon-coloring~$v$  of $((M,\Omega), u)$.  For $i=1,2$ pick  a 2-system of tracks $\{\gamma^i_c\}_c$ of $\Omega_i$ where~$c$ runs over the coupons of $\Omega_i$. Also pick
a 0-system of tracks $\{\gamma_a\}_a$ for $\Sigma_1$ where~$a$ runs over the marked points of $\Sigma_1$.  For each track~$\gamma$  of a coupon of $\Omega$, we  define a morphism $v_\gamma \co u_\gamma\to u^\gamma$ in~$\bb$ as follows. Assume first that the coupon in question is $j_i(c)$  where $i=1,2$ and~$c$ is a coupon of $\Omega_i$. Then  $\gamma$ expands uniquely as the product $\gamma=\alpha \, j_i(\gamma^i_c)$ where $\alpha$ is a homotopy class of paths   in $M \setminus \Omega$ from    $\gamma(0)\in (\partial M)_\bullet \cup \Omega_\bullet$ to
$$
j_i(\gamma^i_c(0)) \in j_i((\partial M_i)_\bullet \cup (\Omega_i)_\bullet).
$$
As above, we use the same letter~$\alpha$   to denote the element of~$G$ represented by the image of~$\alpha$ in $\X$.
We  use the notation of Section~\ref{sect-colorings-G-graphs} for the data associated with the coupon~$c$ and its track  $\gamma^i_c$ in $M_i \setminus \Omega_i$.  Consider the  morphism
$$
v^i_{\gamma^i_c} \colon (u^i)_{\gamma^i_c} \to (u^i)^{\gamma^i_c}
\quad\text{where}\quad
(u^i)_{\gamma^i_c}=\bigotimes_{k=1}^m
(u^i_{\rho_k})^{\varepsilon_k}
\quad\text{and}\quad
(u^i)^{\gamma^i_c}=\bigotimes_{\ell=1}^n
(u^i_{\rho^\ell})^{\varepsilon^\ell}.
$$
(Recall  that $X^+=X$ and $X^-=X^*$ for any object $X \in \bb$.)
The composition of the track~$\gamma$ of the coupon $j_i(c) \subset \Omega$
with a path in the parallel coupon $\widetilde{j_i(c)}\subset \widetilde \Omega$
leading to the $k$-th input is  the product track $\alpha \,  j_i(\rho_k)$. Hence
$$
u_\gamma=\bigotimes_{k=1}^m u_{\alpha\,  j_i(\rho_k)}^{\varepsilon_k}=\bigotimes_{k=1}^m  \bigl(\varphi_{\alpha^{-1}}(u^i_{\rho_k})\bigr)^{\varepsilon_k}.
$$
Similarly,
$$
u^\gamma=\bigotimes_{\ell=1}^n u_{\alpha\,  j_i(\rho^\ell)}^{\varepsilon^\ell}=\bigotimes_{\ell=1}^n \bigl(\varphi_{\alpha^{-1}}(u^i_{\rho^\ell})\bigr)^{\varepsilon^\ell}.
$$
Define the morphism $v_\gamma \co u_\gamma\to u^\gamma$ as the following composition:
$$
\xymatrix@R=1cm @C=2.3cm {
u_\gamma \ar@{.>}[d]_-{v_\gamma} \ar[r]^-{\bigotimes\limits_{k=1}^m
\psi(u^i_{\rho_k},\varepsilon_k)^{-1}} &
\bigotimes\limits_{k=1}^m \varphi_{ {\alpha^{-1}} }  \bigl((u^i_{ \rho_k})^{\varepsilon_k}\bigr)    \ar[r]^-{(\varphi_{ {\alpha^{-1}} })_m}
 &\varphi_{ \alpha^{-1} } \bigl((u^i)_{\gamma^i_c}\bigr)  \ar[d]^-{\varphi_{\alpha^{-1}} \bigl(v^i_{\gamma^i_c}\bigr)}\\
u^\gamma
 &  \bigotimes\limits_{l=1}^n  \varphi_{{\alpha^{-1}}} \bigl((u^i_{\rho^l})^{\varepsilon^l}\bigr)
 \ar[l]_-{\bigotimes\limits_{l=1}^n   \psi{(u^i_{\rho^l},\varepsilon^l)}}
& \ar[l]_-{(\varphi_{{\alpha^{-1}}})_n^{-1}} \varphi_{ \alpha^{-1} } \bigl((u^i)^{\gamma^i_c}\bigr)}
$$
where, for an object $X\in\bb$, we set $\psi(X,+)=\id_{\varphi_{\alpha^{-1}}(X)}$ and $\psi(X,-)=\varphi_{\alpha^{-1}}^1(X)$.
Next, consider a  track~$\gamma$  of the coupon $c_a$ of $\Omega$  at   a marked point~$a$  of $\Sigma_1$.
We     expand (uniquely)  $\gamma=\alpha\,  h(\gamma_a)$ where $ h \co\Sigma_1 \hookrightarrow      \Int(M)$ is the embedding  above  and~$\alpha$ is a homotopy class of paths   in $M \setminus \Omega$ from   $\gamma(0) \in (\partial M)_\bullet \cup \Omega_\bullet$ to
$$
h(\gamma_a(0) )\in h((\Sigma_1)_\bullet)=j_1(\partial_+(M_1)_\bullet)=j_2(\partial_-(M_2)_\bullet)\subset h(\Sigma_1).
$$
For $i=1,2$ the point $h_i(a)\in \partial \Omega_i$ is an   endpoint of a unique   arc $e_i$ of~$\Omega_i$. Then  $h_i ( \gamma_a)$    is a track   of  $e_i$ in $M_i \setminus \Omega_i$, and   $h_i (\gamma_a)=\beta_i \, \gamma^i_{e_i}$  for a unique detour $\beta_i$     in $M_i \setminus \Omega_i$. The isomorphisms of $\bb$-colored $G$-surfaces  \eqref{diff}    induces an isomorphism
$$
\Phi \co u^1_{\beta_1 \gamma^1_{e_1}}=u^1_{h_1(\gamma_a)} \to
u^2_{h_2(\gamma_a)} =u^2_{\beta_2 \gamma^2_{e_2}}.
$$
For $i=1,2$, set
$$
\Psi_i= \varphi_2(\alpha^{-1},\beta_i^{-1})_{u^i_{\gamma^i_{e_i}}} \circ
 \varphi_{ \alpha^{-1} } \bigl(u^i_{\beta_i,\gamma^i_{e_i}}\bigr) \co
 \varphi_{ \alpha^{-1} }\bigl( u^i_{\beta_i \gamma^i_{e_i}}\bigr) \to
  \varphi_{ \beta_i^{-1} \alpha^{-1} }(u^i_{\gamma^i_{e_i}}).
$$
Let  $\varepsilon=\varepsilon_a $  be the sign carried by~$a$.  Clearly, the composition of the track~$\gamma$ with a path in the parallel coupon $\widetilde{c_a}$ leading to its unique input is the track $\alpha \, j_1(\beta_1\gamma^1_{e_1})$ of the arc $j_1(e_1)$ of~$\Omega$. Then
$$
u_\gamma= \bigl ( \varphi_{(\alpha \beta_1)^{-1}}(u^1_{\gamma^1_{e_1}})\bigr )^\varepsilon= \bigl ( \varphi_{ \beta_1^{-1}\alpha^{-1}}(u^1_{\gamma^1_{e_1}}) \bigr )^\varepsilon
$$
Considering the  output of $\widetilde{c_a}$, we similarly obtain that
$$
u^\gamma = \bigl ( \varphi_{(\alpha \beta_2)^{-1}}(u^2_{\gamma^2_{e_2}})\bigr )^\varepsilon =\bigl ( \varphi_{ \beta_2^{-1}\alpha^{-1}}(u^2_{\gamma^2_{e_2}}) \bigr )^\varepsilon.
$$
Define the morphism $v_\gamma \co u_\gamma\to u^\gamma$ as
$$
v_\gamma= \begin{cases}
\Psi_2 \circ \varphi_{\alpha^{-1}} (\Phi) \circ (\Psi_1 )^{-1} & \text{if $\varepsilon=+$,} \\
(\Psi_1 \circ \varphi_{\alpha^{-1}} (\Phi^{-1}) \circ (\Psi_2 )^{-1})^* & \text{if $\varepsilon=-$.}
\end{cases}
$$
This defines a coupon-coloring~$v$ of $((M,\Omega),u)$ and  a $\bb$-coloring $(u,v)$  of $(M,\Omega)$.

It follows from the definition of $u$ that the embedding $j_1 \co M_1 \hookrightarrow M$ induces an isomorphism of  $\bb$-colored $G$-surfaces $\partial_-(M_1) \to \partial_-M$. Composing  with the isomorphism  $(h_1)_- \colon -\Sigma_0 \to \partial_-(M_1)$, we obtain an isomorphism  $-\Sigma_0 \to \partial_-M$. Similarly, $j_2$ and $(h_2)_+$  induce an isomorphism  $\Sigma_2 \to \partial_+M$. This yields  an isomorphism   of $\bb$-colored $G$-surfaces
$$
(-\Sigma_0) \sqcup \Sigma_2 \to \partial M=(\partial M, \partial \Omega, \partial u).
$$
The $\bb$-colored $G$-graph $((M,\Omega), u,v)$   endowed with this  isomorphism is a $\bb$-colored   $G$-cobordism between $\Sigma_0$ and $\Sigma_2$ which represents the composition $\chi_2 \circ \chi_1$ in $\Cob^G_\bb$. This composition is well-defined:
it is independent of  the choice of the points  $\{p_C\}_C$ and the tracks above. In particular, if
$p_C \in C \cap h((\Sigma_1)_\bullet)$ is replaced with $p'_C \in C \cap h((\Sigma_1)_\bullet)$, then there is an  isotopy $\{f_t \co M \to M\}_{t\in I}$ such that $f_0=\id_M$, $f_t \vert_{\Omega}=\id_\Omega$ and $f_t \vert_{M\setminus C}=\id_{M\setminus C}$ for all~$t \in I$, and    $f_1(p_C)=p'_C$. The  $\bb$-colored $G$-cobordisms derived from $p_C$ and $p'_C$  are related via the   diffeomorphism  $f_1$ and  the conjugation   by the element of~$G$ represented by the path $t \in I \mapsto f_t(p_C) \in C\setminus \Omega$.
Also, this composition of morphisms in $\Cob^G_\bb$ is associative.

\subsection{Further structures in $\Cob^G_\bb$} \label{sect-pptes-cob-G}\label{sect-cyl-id}
All objects of $\Cob^G_\bb$  have identity endomorphisms in $\Cob^G_\bb$.
Namely,  for a  $\bb$-colored $G$-surface $\Sigma$, the cylinder over~$\Sigma$ (see Section~\ref{sect-cyl}) endowed with the standard identification of its boundary   with $(-\Sigma) \sqcup \Sigma$  is a $\bb$-colored $G$-cobordism   representing  $\id_\Sigma$ in $\Cob^G_\bb$. These identity endomorphisms are identities for the composition in $\Cob^G_\bb$.  So, $\Cob^G_\bb$ is a category.

A more general construction derives from any isomorphism  $f \co \Sigma \to \Sigma'$ between $\bb$-colored $G$-surfaces
a morphism  $  \Sigma \to \Sigma'$ in $\Cob^G_\bb$  called the \emph{cylinder of $f$} and denoted $\cyl(f) $. This morphism is represented by the $\bb$-colored $G$-cobordism between~$\Sigma$ and~$\Sigma'$ which is formed by  the cylinder $C_{\Sigma'}$ over $\Sigma'$ together with     the isomorphism of  $\bb$-colored $G$-surfaces  $  (-\Sigma) \sqcup \Sigma' \to \partial C_{\Sigma'} $  carrying any $x\in \Sigma$     to
$ (f(x), 0) $    and   any $x'\in \Sigma'$ to  $ (x', 1)$.
It is easy to check that if $f \co \Sigma \to \Sigma'$ and $g \co \Sigma' \to \Sigma''$
 are composable isomorphisms between  $\bb$-colored $G$-surfaces, then $\cyl(gf)=\cyl(g) \circ \cyl(f)$. Consequently, the cylinders  are isomorphisms  in  $\Cob^G_\bb$.

The disjoint union of $\bb$-colored $G$-surfaces and $G$-cobordisms turns   $\Cob^G_\bb$   into a symmetric monoidal category.
Its unit object   is   the
empty set  $\emptyset$ viewed as a $\bb$-colored $G$-surface.  The associativity and   unitality constraints  in $\Cob^G_\bb$ are the cylinders  of the tautological isomorphisms
$$
(\Sigma \sqcup  \Sigma') \sqcup \Sigma'' \simeq \Sigma \sqcup  (\Sigma'  \sqcup \Sigma'')
\quad \text{and} \quad
\emptyset \sqcup \Sigma \simeq \Sigma \simeq \Sigma \sqcup \emptyset,
$$
where $\Sigma , \Sigma' , \Sigma''$ run over all $\bb$-colored $G$-surfaces. The   symmetry  in~$\Cob^G_\bb$ is
determined by the cylinders  of the  obvious  permutation isomorphisms
$$
\Sigma \otimes\Sigma'=\Sigma \sqcup\Sigma' \simeq \Sigma' \sqcup\Sigma=\Sigma' \otimes\Sigma.
$$

The category $\Cob^G_\bb$ has a canonical left duality $\{(-\Sigma,\lev_\Sigma)\}_\Sigma$, where $\Sigma$ runs over all $\bb$-colored $G$-surfaces. Here $-\Sigma$ is the opposite $\bb$-colored $G$-surface (see Section~\ref{sect-G-surf}) and the morphism $\lev_\Sigma\co (-\Sigma)\otimes \Sigma \to   \emptyset$ in $\Cob^G_\bb$ is represented
by the  $\bb$-colored $G$-cobordism
formed by  the cylinder $C_{-\Sigma}$ over $-\Sigma$ together with  the  isomorphism  of $\bb$-colored $G$-surfaces
$$
 -((-\Sigma)\otimes \Sigma)  \sqcup \emptyset = \Sigma \sqcup  (-\Sigma) \simeq   (\Sigma \times \{0\}) \sqcup (-\Sigma \times \{1\})=   \partial C_{-\Sigma} \, .
$$
This left duality  turns the symmetric monoidal category $\Cob^G_\bb$ into a ribbon category with trivial twist (see \cite[Lemma~3.5]{TVi5}). In particular, $\Cob^G_\bb$ is spherical.

The category $\Cob^G $ defined in Section~\ref{sect-CobG-v0} is a symmetric monoidal subcategory of $\Cob^G_\bb$. Indeed, any $G$-surface in the sense of Section~\ref{sect-CobG-v0} is a $\bb$-colored $G$-surface with an empty set of marked points (so that the notion of a $\bb$-coloring for this surface is void). Also, any $G$-cobordism between $G$-surfaces (in the sense of Section~\ref{sect-CobG-v0})  is a $\bb$-colored $G$-cobordism with an empty ribbon graph (so that the notion of a $\bb$-coloring for this cobordism is void). This defines an embedding of categories
$\Cob^G \hookrightarrow \Cob^G_\bb$ which is symmetric strict monoidal.

\section{Graph HQFTs and the main theorem}

\subsection{Graph HQFTs}\label{sect-def-graph-HQFT}
Let $\bb$ be a $G$-crossed category over $\kk$.
A \emph{graph HQFT  over $\bb$ with target $\X=K(G,1)$} is a symmetric   strong   monoidal functor
$
Z\co \Cob^G_\bb \to \Mod_\kk
$
where $\Cob^G_\bb$ is  the symmetric monoidal category   defined in  Section~\ref{sect-cobgb} and $\Mod_\kk$ is the symmetric monoidal category  of \kt modules and \kt linear homomorphisms.
Such a functor includes  \kt linear   isomorphisms
$$
Z_0\co \kk \iso   Z(\emptyset) \quad \text{and} \quad Z_2(\Sigma,\Sigma')\co Z(
\Sigma ) \otimes_\kk Z( \Sigma') \iso  Z( \Sigma  \sqcup \Sigma')
$$
for   any  $\bb$-colored $G$-surfaces $\Sigma, \Sigma'$.

Recall that a morphism $\emptyset \to \emptyset$ in $\Cob^G_\bb$ is represented by a triple $(M,\Omega,\id_\emptyset)$, where~$M$ is a closed oriented 3-manifold and~$\Omega$ is a $\bb$-colored $G$-graph in~$M$. Applying~$Z$ to this morphism we get a \kt linear homomorphism  $Z(\emptyset) \to Z(\emptyset)$. Since $Z(\emptyset) \simeq \kk$, the latter  homomorphism is multiplication by an element of $\kk$. This element is  denoted $Z(M,\Omega)$ and  is  an isomorphism invariant  of  $\bb$-colored $G$-graphs in closed oriented 3-manifolds.

As in  topological quantum field theory (TQFT),  for any  $\bb$-colored $G$-surface~$\Sigma$,
the \kt module  $Z(\Sigma)$ is projective of finite type (see, for example, \cite{TVi5}). Clearly, the isomorphism class of this module
is preserved under isotopy of the set of marked points in~$\Sigma$.
Also, each  connected component~$\Gamma$   of~$\Sigma$ may be treated as a $\bb$-colored $G$-surface whose marked points are the marked points of~$\Sigma$ belonging to~$\Gamma$.  The strong monoidality and the symmetry   of~$Z$ yield  a \kt linear isomorphism
\begin{equation}\label{eq-cas-non-connexe}
Z(\Sigma) \simeq  \bigotimes_{\Gamma} Z(\Gamma)
\end{equation}
where $\Gamma$ runs over connected components of~$\Sigma$ and $\otimes$ is the unordered tensor product of \kt modules.

By Section~\ref{sect-cyl-id},   $\bb$-colored $G$-surfaces and the cylinders of their  isomorphisms  form a (non-full) symmetric monoidal  subcategory of  $\Cob^G_\bb$. It is denoted $\mathrm{Homeo}^G_\bb$.
Restricting a graph HQFT~$Z$ to $\mathrm{Homeo}^G_\bb$, we obtain a symmetric monoidal functor  $ \mathrm{Homeo}^G_\bb \to \Mod_\kk$.  In particular,  $Z$ induces a $\kk$-linear representation of the group of isotopy classes of automorphisms of $\Sigma$.

A graph HQFT $Z$ over $\bb$ is \emph{non-degenerate}  if for any $\bb$-colored $G$-surface $\Sigma$,  the \kt module  $Z(\Sigma)$  is   spanned    by the images of the  homomorphisms
$
 Z(\emptyset) \to Z(\Sigma)
$
induced by the morphisms $  \emptyset \to \Sigma$ in  $\Cob^G_\bb$. Formula~\eqref{eq-cas-non-connexe} and the strong monoidality of~$Z$ imply that if this condition holds for all connected $\bb$-colored $G$-surfaces,  then it holds for all disconnected   $\bb$-colored $G$-surfaces  as well.

An  \emph{isomorphism}   between  graph HQFTs    $Z,Z'\co \Cob^G_\bb \to \Mod_\kk$
is a  monoidal natural isomorphism   $ Z  \to Z'$, i.e.,   a family   of  \kt linear isomorphisms
$\{\rho_\Sigma \co Z ( \Sigma)\to  Z'( \Sigma)\}_{\Sigma}$
where~$\Sigma$ runs over all $\bb$-colored $G$-surfaces. These isomorphisms should commute   with the action of $\bb$-colored $G$-cobordisms,  be multiplicative under disjoint unions of $\bb$-colored $G$-surfaces, and satisfy $\rho_\emptyset=Z'_0 \, Z_0^{-1}$.   It is clear that if~$Z$ and~$Z'$      are isomorphic, then  $Z(M,\Omega)= Z'(M,\Omega)$   for any $\bb$-colored $G$-graph  $\Omega$ in a closed oriented 3-manifold~$M$.

Recall from Section~\ref{sect-CobG-v1} that a 3-dimensional HQFT (over $\kk$) is a symmetric strong monoidal functor
$\Cob^G \to \Mod_\kk$. Let~$J$ be the embedding $ \Cob^G \hookrightarrow \Cob^G_\bb$ defined in Section~\ref{sect-pptes-cob-G}. A \emph{graph extension} along~$\bb$ of a 3-dimensional HQFT $Z \co \Cob^G \to \Mod_\kk$   is a graph HQFT $\widetilde{Z}\co \Cob^G_\bb \to \Mod_\kk$ such that
$
\widetilde{Z} \circ J=Z
$
as symmetric monoidal functors. Clearly, two 3-dimensional HQFTs having isomorphic graph extensions are themselves  isomorphic.

\subsection{The surgery graph HQFT}\label{sect-surg-HQFT}
Let $\bb$ be an anomaly free $G$-modular category over $\kk$ (see Section~\ref{sect-modular-graded-cat}).  In~\cite{TVi4} we  derive from~$\bb$  an HQFT  $ \tau_\bb \co \Cob^G \to \Mod_\kk$ called the surgery HQFT. The methods of \cite{Tu1},  \cite{TVi4} yield a non-degenerate graph HQFT $\Cob^G_\bb \to \Mod_\kk$ extending $\tau_\bb$ and  called the \emph{surgery graph HQFT}. It is also denoted by $\tau_\bb$. We state here two formulas computing $\tau_\bb$ for closed surfaces and for graphs in closed 3-manifolds.

For a connected $\bb$-colored $G$-surface $\Sigma$  of genus $g\geq 0$,  the \kt module   $\tau_{\bb}(\Sigma)$   is computed (up to isomorphism) as follows. The surface~$\Sigma$ carries a base point~$\ast$, a finite set of marked points  $A=\{a_1, \dots,a_m\} \subset \Sigma \setminus \{\ast\}$,  and a homotopy class of  maps
$
(\Sigma \setminus A, \ast)\to (\X,\x).
$
Pick a track $\gamma_i$ of $a_i$ for $i = 1, \dots,  m $ so that these~$m$ tracks can  be represented by paths in $\Sigma \setminus A$   meeting  only in their starting point~$\ast$.  Recall from Section~\ref{sect-marked-surfaces} the homotopy class $\mu_i=\mu_{\gamma_i} \in \pi_1(\Sigma \setminus A,\ast)$ of the loop encircling $a_i$. The group $\pi_1(\Sigma \setminus A,\ast)$ is generated by $\mu_{1}, \dots, \mu_{m}$ and $2g$ elements $\alpha_1,\beta_1, \dots, \alpha_g,\beta_g$ subject to the only relation
\begin{equation*}
(\alpha_1^{-1}\beta_1^{-1}\alpha_1\beta_1)\cdots (\alpha_g^{-1}\beta_g^{-1}\alpha_g\beta_g) \, (\mu_{1})^{\varepsilon_1} \cdots (\mu_{m})^{\varepsilon_m}=1,
\end{equation*}
where  $\varepsilon_i=\pm 1$ is the sign of $a_i$. As usual, denote  the element of~$G$ represented by the image of
$\mu_{i}$   in~$\X$ by the same symbol $\mu_i$, and similarly for $\alpha_j, \beta_j$.  Let $u_i\in \bb_{\mu_i}$ be the color of $\gamma_i$. Set
$$
U_\Sigma=u_1^{\varepsilon_1} \otimes \cdots  \otimes u_m^{\varepsilon_m} \in \bb_{(\mu_{1})^{\varepsilon_1} \cdots (\mu_{m})^{\varepsilon_m}}.
$$
(If $m=0$, then $U_\Sigma=\un \in \bb_1$.) Let $\mathcal{J}=\amalg_{\alpha\in G}\, \mathcal{J}_\alpha$  be  a   representative set of simple objects of~$\bb$.   Then there is an $\kk$-linear isomorphism
\begin{equation}\label{eq-def-tau-B-Sigma}
\tau_{\bb}(  \Sigma  )   \simeq
\!\!\!\!\!\!\!
\bigoplus_{J_1\in \mathcal{J}_{\beta_1}, \ldots,J_g \in \mathcal{J}_{\beta_g}}
\!\!\!\!\!\!\!
\Hom_{\bb}\Bigl (\un_{\bb}, \bigl(\varphi_{\alpha_1}(J_1)^* \otimes J_1 \bigr) \otimes \cdots \otimes \bigl(\varphi_{\alpha_g}(J_g)^* \otimes J_g \bigr) \otimes   U_\Sigma \Bigr).
\end{equation}

We recall      the definition of the  scalar     invariant  $\tau_\bb (M,\Omega)\in \kk$ of a  $\bb$-colored $G$-graph  $(M,\Omega)$ where $M$ is a closed connected oriented 3-manifold.
The Lickorish-Wallace  theorem on  surgery presentations of 3-manifolds   implies that there are
a   framed   link $L=L_1\cup \cdots \cup   L_n    $   in $\R^2 \times (0,1) $ and a ribbon graph~$\Omega'$ in $(\R^2 \times (0,1) ) \setminus L$   such that the   surgery on $S^3=\RR^3\cup\{\infty\}$ along $L$ turns  the pair $(S^3, \Omega')$ into   $(M,\Omega)$, at least  up to an orientation-preserving  homeomorphism. Set
$$
E=(\R^2 \times (0,1)) \setminus (\Omega' \cup L) \subset M \setminus \Omega
$$
and take any point $z \in E$  with big second
coordinate. We can assume that $M_\bullet =\Omega_\bullet =\{z\}$. Restricting the given homotopy class of maps $(M \setminus \Omega,M_\bullet) \to (\X,\x)$   to~$E $ we obtain     a homotopy class of maps $(E,z) \to (\X,\x)$.
Orient the link~$L$ arbitrarily. Stabilize each component $L_i$ of $L$ by inserting into it a small coupon~$c_i$ as in Section~\ref{sect-stabilization}. We choose the bottom base of~$c_i$ so that its unique input is directed out of $c_i$. This turns $\Omega'\cup L$ into a $G$-graph in $\R^2 \times (0,1)$   denoted~$\overline \Omega$.
For each $i \in \{ 1, \dots,  n\}$,  pick a track $\gamma(i)$ of $c_i$ and set $\mu_i=\mu_{\gamma(i)}\in \pi_1(E,z)$.
Let $\mathrm{col}(L)$ be the set of all maps $\lambda\co\{ 1, \dots,  n\} \to \mathcal{J}$ such that
 $\lambda(i) \in  \bb_{\mu_i}$ for all $i \in \{ 1, \dots,  n\}$.
Any such~$\lambda$  together with the $\bb$-coloring $(u,v)$ of $\Omega$ determine a $\bb$-coloring $(\overline{u}, \overline{v})$ of~$\overline{\Omega}$ as follows. Pick a 1-system of tracks $\{\gamma'_e\}_{e}$ for $\Omega'$.
For each $i \in \{ 1, \dots,  n\}$, the composition of the track~$\gamma(i)$ with a path in the parallel coupon~$\widetilde{c_i}$ (obtained by slightly pushing $c_i$ along the  framing) leading to its unique input is a track $\gamma_i$ of the single  arc of $\overline{\Omega}$ contained in~$L_i$. Note that $\mu_{\gamma_i}=\mu_i$. Then the family $\{\gamma'_e\}_{e} \cup \{\gamma_i\}_i$ is a 1-system of tracks for $\overline{\Omega}$. By Lemma~\ref{lem-precolorings-G-graphs-in}, the family of objects $\{u_{\gamma'_e}\}_{e} \cup \{\lambda(i)\}_i$ determines
a $\bb$-precoloring~$\overline{u}$ of~$\overline{\Omega}$ such that $\overline{u}_{\gamma'_e}=\varphi_1(u_{\gamma'_e})$ and
$\overline{u}_{\gamma_i}=\varphi_1(\lambda(i))$. Pick a 2-system of tracks $\{\gamma'_c\}_c$ for $\Omega'$.
Then the family $\{\gamma'_c\}_{c} \cup \{\gamma(i)\}_i$ is a 2-system of tracks for $\overline{\Omega}$.
Consider the $\bb$-coloring $(u^1,v^1)$ of $\Omega'$ obtained from $(u,v)$ by conjugation by $1\in G$ (see Section~\ref{sect-stabilization}).
It follows from the definition that for any coupon $c$ of $\Omega'$,  $(\overline{u})_{\gamma'_c}=(u^1)_{\gamma'_c}$ and $(\overline{u})^{\gamma'_c}=(u^1)^{\gamma'_c}$.
For each $i \in \{ 1, \dots,  n\}$, the composition of the track~$\gamma(i)$ with a path in $\widetilde{c_i}$ leading to its unique output is a track~$\gamma^i$ of the single  arc of $\overline{\Omega}$ contained in $L_i$.
Clearly,    $\mu_{\gamma^i}=\mu_i$ and $\gamma^i=\lambda_i \gamma_i $ where $\lambda_i \in \pi_1(E,z)$ is the longitude  of $L_i$ determined by $\gamma (i)$ and  the  orientation and the framing of~$L_i$. By definition of the surgery, the image of $\lambda_i$ in~$\X$ represents $1 \in G$ and so $\overline{u}_{\gamma^i}=\varphi_1(\lambda(i))$. Then $(\overline{u})_{\gamma(i)}=(\overline{u})^{\gamma(i)}=\varphi_1(\lambda(i))$.
By Section~\ref{sect-contruct-colorings}, the family of morphisms
$
\{(v^1)_{\gamma'_c}\co (\overline{u})_{\gamma'_c} \to (\overline{u})^{\gamma'_c}\}_{c} \cup \{\id_{\varphi_1(\lambda(i))}\co (\overline{u})_{\gamma(i)} \to (\overline{u})^{\gamma(i)}\}_i
$
determines a coupon-coloring~$\overline{v}$ of~$(\overline{\Omega},\overline{u})$.  Hence $(\overline{u},\overline{v})$ is a  $\bb$-coloring of $\overline{\Omega}$. Denote the resulting $\bb$-colored $G$-graph by $\overline{\Omega}^\lambda$. It follows from Section~\ref{sect-contruct-colorings} that the isomorphism class of $\overline{\Omega}^\lambda$ does not depend on the choices of the tracks.
Recall from \cite{TVi4} the  monoidal functor $F_{\bb}$ from the category of isotopy classes of $\bb$-colored $G$-graphs in $\R^2 \times [0,1]$ to the category~$\bb$. In particular, this functor yields   an isotopy invariant
$
F_\bb(\overline \Omega^\lambda)\in \End_\bb(\un)=\kk$.
Then
\begin{equation}\label{eq-def-tau-B-M-Omega}
\tau_{\bb}(M,\Omega)=\Delta^{-n-1}\sum_{\lambda\in\mathrm{col}(L)}
\left(\prod_{i=1}^n \dim\bigl(\lambda( i)\bigr) \right) F_\bb(\overline \Omega^\lambda) \in \kk,
\end{equation}
where $\Delta$ is the canonical rank of~$\bb$ defined in Section~\ref{sect-modular-graded-cat}.

\subsection{The state-sum graph HQFT}\label{sec-statesumgraph}
Assume that $\kk$ is a field and let $\cc$ be a spherical $G$-fusion  category over~$\kk$ with  $\dim(\cc_1)\neq 0$. In \cite{TVi2}, we  derive  from~$\cc$  a 3-dimensional HQFT $\vert \cdot \vert_\cc\co \Cob^G \to \Mod_\kk$ called the \emph{state sum HQFT}.  By Section~\ref{sect-center-graded-cat},
the $G$-center $\zz_G(\cc)$ of~$\cc$ is a $G$-ribbon (and so $G$-crossed) category over~$\kk$.
In Section~\ref{sect-proof}, we prove the following theorem:

\begin{thm}\label{thm-extension-state-sum-graph-HQFT}
The state sum HQFT $\vert \cdot \vert_\cc$ extends to a graph HQFT over $\zz_G(\cc)$.
\end{thm}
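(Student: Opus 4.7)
The plan is to adapt the state sum construction of $\vert \cdot \vert_\cc$ from \cite{TVi2} so that it accepts $\zz_G(\cc)$-colored ribbon graphs embedded in the cobordism, and then to verify that the enlarged construction is a symmetric strong monoidal functor $\widetilde{Z}\co \Cob^G_{\zz_G(\cc)} \to \Mod_\kk$ restricting to $\vert \cdot \vert_\cc$ on $\Cob^G$. The key tool is the invariant of colored knotted nets in $S^2$ promised in Section~\ref{sect-prelimoncolorednets}, which is designed to play the role of $6j$-symbols precisely in the presence of half-braidings.

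First, given a $\zz_G(\cc)$-colored $G$-cobordism $(M,\Omega,h)$, one chooses a skeleton $P\subset M$ (with the usual boundary compatibility and $\X$-structure required in \cite{TVi2}) placed in generic position with respect to $\Omega$: coupons of $\Omega$ lie inside 2-strata of $P$ and arcs of $\Omega$ meet $P$ transversely in finitely many interior points of faces. A state is then a coloring of the 2-strata of $P$ by simple objects of $\cc$ (as in \cite{TVi2}). To each state one associates a scalar/tensor by the same local recipe as in the closed case, except that at every 0-stratum of $P$ whose link meets $\Omega$ the local contribution is computed by the knotted-net invariant of Section~\ref{sect-prelimoncolorednets}, where the half-braidings $\sigma$ attached to the $\zz_G(\cc)$-colors of the arcs of $\Omega$ enter precisely at the points where faces of $P$ cross $\Omega$, and where the coupon-colorings of $\Omega$ are inserted via the forgetful functor $\uu\co \zz_G(\cc)\to \cc$. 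Summing the states with the usual normalization by dimensions of $\cc_1$ and face dimensions produces the candidate invariant $\widetilde{Z}(M,\Omega,h)$.

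The principal obstacle, and the real work, is independence of $P$. Moves on $P$ disjoint from $\Omega$ are handled as in \cite{TVi2}. The new moves — sliding a face of $P$ across an arc of $\Omega$, passing a 0-stratum of $P$ across an arc or coupon of $\Omega$, and coupon-local modifications — must be reduced, after resummation by fusion in $\cc$, to identities among the colored knotted-net invariants of Section~\ref{sect-prelimoncolorednets}. Each such identity expresses exactly the naturality and hexagon axioms of the half-braidings in $\zz_G(\cc)$, together with the compatibility of the crossing $\varphi$ with duality and with the structural isomorphisms $\varphi_2,(\varphi_\kappa)_n,\varphi_\kappa^1$; this is why the construction requires $\zz_G(\cc)$-coloring rather than a mere $\cc$-coloring through $\uu$. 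Invariance under isomorphism of $\zz_G(\cc)$-colored $G$-graphs, stabilization, and conjugation by elements of $G$ then follow from the parallel formal properties recorded in Sections~\ref{sect-stabilization}--\ref{sect-contruct-colorings}.

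Finally, functoriality and monoidality of $\widetilde Z$ are obtained from the standard gluing behaviour of state sums along boundary skeletons (which now also glues ribbon graphs transversely across the gluing surface) and from the disjoint-union additivity of $P$ and $\Omega$. When $\Omega=\emptyset$ the construction reduces tautologically to that of \cite{TVi2}, so $\widetilde Z\circ J=\vert\cdot\vert_\cc$ as symmetric strong monoidal functors, proving the theorem. The hardest step will be pinning down the precise knotted-net identities that realise the skeleton moves near $\Omega$; once these identities are available, the remainder of the proof parallels the closed case.
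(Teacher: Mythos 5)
Your overall strategy — replace the $6j$-symbols of the state sum with the colored knotted-net invariant of Section~\ref{sect-prelimoncolorednets}, then verify move-invariance and monoidality — is the right idea, and it is what the paper does. But your geometric setup for relating $\Omega$ to the skeleton $P$ does not match the one the paper uses, and as stated it is inconsistent in a way that makes the key step fall apart.

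You place coupons of $\Omega$ inside the 2-strata of $P$, yet have the arcs of $\Omega$ meet $P$ transversely in the interiors of faces. These two requirements are incompatible: an arc abutting a coupon that lies flat in a 2-stratum of $P$ must itself lie in $P$ near that coupon, not transverse to it. More seriously, if arcs are transverse to the 2-strata and disjoint from $P^{(1)}$, then a generic vertex of $P$ has a small spherical link that misses $\Omega$ entirely, so your rule ``at every 0-stratum of $P$ whose link meets $\Omega$ the local contribution is computed by the knotted-net invariant'' applies to no vertex at all. The half-braidings never enter, and the $\zz_G(\cc)$-structure on the colors plays no role — the proof would then reduce to the $G$-graded Turaev--Viro sum on $M$, oblivious to $\Omega$.

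The paper avoids this by drawing $\Omega$ \emph{inside} $P$ as a knotted plexus $d$ (Section~\ref{sect-knotted-plexuses-in-skel}): arcs of $d$ lie in the 2-strata of $P$ with possible crossings, and cross $P^{(1)}$ at so-called switches, with a normal framing recovering the ribbon graph. One then works with the 1-complex $\tilde d = d \cup P^{(1)}$, whose complementary faces are what gets colored by simple objects of $\cc$, and whose \emph{nodes} are of four types: vertices of $P$, switches, crossings, and coupons. The knotted-net invariant of Section~\ref{sect-nets-in-sphere} is applied at \emph{every} node, not just vertices. The half-braidings enter precisely at the switches (case (ii) of the list in Section~\ref{sect-the-inv-for-MR}), where an arc of $d$ passes from one face to another through an edge of $P^{(1)}$: the crossings of the associated knotted $\cc$-net $\Gamma_x^{\cl}$ there are colored by structural isomorphisms of the crossing $\varphi$, and the under/over data encodes the half-braiding applied to the adjacent face colors. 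Crossings and coupons of $d$ contribute via cases (iii) and (iv). This is the combinatorial mechanism your sketch is missing. Once that is in place, the proof of well-definedness (Theorem~\ref{thm-state-sum-ribbon-graphs}) and the passage to a graph HQFT on cobordisms both go by the move-analysis of \cite[Chapter 14 and Section 15.7]{TVi5}, as you correctly anticipate for the remainder of the argument.
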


The proof of  this theorem goes by extending  the state sum method of \cite{TVi2}  to so-called knotted plexuses in skeletons (which represent ribbon graphs, see Section~\ref{sect-knotted-plexuses-in-skel}) via an invariant of colored knotted $G$-nets (see Section~\ref{sect-prelimoncolorednets}).   The graph HQFT  constructed  in the proof of Theorem~\ref{thm-extension-state-sum-graph-HQFT}  is called the \emph{state sum graph HQFT} and is again denoted  by $\vert \cdot \vert_\cc $.

\subsection{Comparison theorem}\label{sect-main-result}
Assume that $\kk$ is an algebraically closed field and let $\cc=\oplus_{g\in G} \, \cc_g$ be an additive spherical $G$-fusion category over $\kk$ with  $\dim(\cc_1) \neq 0$.  By Section~\ref{sect-center-graded-cat}, the  $G$-center $\zz_G(\cc)$ of~$\cc$ is an additive anomaly free $G$-modular category over $\kk$.
By Sections~\ref{sect-surg-HQFT} and \ref{sec-statesumgraph}, the category $\cc$ gives rise to two graph HQFTs: the surgery graph HQFT $\tau_{\zz_G(\cc)} \co \Cob^G_{\zz_G(\cc)} \to \Mod_\kk$ and the state sum graph HQFT  $\vert \cdot \vert_\cc \co \Cob^G_{\zz_G(\cc)} \to \Mod_\kk$.   Our main result is the following theorem:

\begin{thm}\label{thm-comparison}
The graph HQFTs $\tau_{\zz_G(\cc)}$ and $\vert \cdot \vert_\cc$ are  isomorphic.
\end{thm}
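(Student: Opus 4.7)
The plan is to reduce Theorem~\ref{thm-comparison} to a statement about scalar invariants of closed $\zz_G(\cc)$-colored $G$-graphs, and then verify that equality via a common surgery presentation. First I would invoke the non-degeneracy of both graph HQFTs (established, respectively, in the construction of the surgery HQFT in \cite{TVi4} and in the extension yielded by Theorem~\ref{thm-extension-state-sum-graph-HQFT}). A standard argument then shows that a monoidal natural isomorphism $\rho\co \tau_{\zz_G(\cc)}\to \vert\cdot\vert_\cc$, compatible with the action of $\zz_G(\cc)$-colored $G$-cobordisms, exists and is unique once we know, for every $\zz_G(\cc)$-colored $G$-graph $\Omega$ in a closed oriented 3-manifold~$M$, the scalar identity
\begin{equation*}
\tau_{\zz_G(\cc)}(M,\Omega)=\vert M,\Omega\vert_\cc\in \kk.
\end{equation*}
Indeed, values on morphisms $\emptyset\to\Sigma$ together with gluing determine the HQFT on all of $\Cob^G_{\zz_G(\cc)}$, and compatibility of the $\rho_\Sigma$ with arbitrary cobordisms follows by closing them up to obtain closed 3-manifolds with $G$-graphs.

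Next, I would present a given pair $(M,\Omega)$ by surgery on a framed link $L\subset S^3$ disjoint from a ribbon graph $\Omega'\subset S^3\setminus L$, as in Section~\ref{sect-surg-HQFT}. Formula~\eqref{eq-def-tau-B-M-Omega} gives $\tau_{\zz_G(\cc)}(M,\Omega)$ as a weighted sum over colorings of the components of~$L$ by simple objects of $\zz_G(\cc)$ (in the appropriate $G$-graded components determined by the meridians of~$L$), with weights built from quantum dimensions and the value of the Reshetikhin-Turaev functor $F_{\zz_G(\cc)}$ on a $\zz_G(\cc)$-colored $G$-graph in $\R^2\times(0,1)$. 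The task is therefore to compute $\vert M,\Omega\vert_\cc$ in the same surgery presentation and match it with this expression.

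To carry this out, I would choose a skeleton of $M$ adapted to the surgery: a skeleton of $S^3\setminus L$ extended across the surgery solid tori, with $\Omega$ realized as a knotted plexus (Section~\ref{sect-knotted-plexuses-in-skel}) inside it. The state sum then organizes naturally into an inner sum over colorings of the skeleton of $S^3\setminus(L\cup\Omega')$ and an outer sum over the colors assigned to the 2-cells meeting the cores of the surgery solid tori. The key structural observation, established in Section~\ref{sect-sugery-computation}, is that the outer sum can be repackaged through \emph{torus vectors}: the state sum module associated to a $G$-torus surrounding a component~$L_i$ of~$L$ admits a natural basis indexed by the simple objects of the component $\zz_{\mu_i}(\cc)$ of $\zz_G(\cc)$, so that the contribution of each surgery solid torus collapses to the corresponding simple-object weight appearing in~\eqref{eq-def-tau-B-M-Omega}. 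What survives is precisely the $\zz_G(\cc)$-colored $G$-graph in $\R^2\times(0,1)$ evaluated by $F_{\zz_G(\cc)}$, up to the factors $\dim(\lambda(i))$ and the canonical-rank normalization $\Delta^{-n-1}=(\dim\cc_1)^{-n-1}$.

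The main obstacle, and the content of the surgery formula of Section~\ref{sect-sugery-computation}, is establishing the torus-vector identification in the $G$-graded setting: one must show that the pairing on $\vert T^2_g\vert_\cc$ arising from gluing two solid tori (along a meridian-preserving diffeomorphism twisted by $g\in G$) is nondegenerate with basis indexed by simple objects of $\zz_g(\cc)$, and that under this basis the solid-torus-with-colored-core cobordism acts as expected. This in turn uses the identification of $\zz_G(\cc)$ as the $G$-center of $\cc$ (via the explicit half-braiding structure recalled in Appendix~\ref{sect-appendix} and \cite{TVi3}), together with the sphericity and nondegeneracy assumptions on $\cc$ which guarantee the required orthogonality and completeness relations. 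Once the torus-vector computation matches the surgery formula term by term, the scalar identity follows, and the graph HQFT isomorphism is then assembled by the non-degeneracy reduction of the first paragraph.
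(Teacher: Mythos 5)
Your overall strategy for the scalar identity (part (b) in the paper's terminology) — present $(M,\Omega)$ by surgery on a link in $S^3$, expand the torus vectors $w_\alpha$ in the basis $\{[J]\}_{J\in\mathcal J_\alpha}$, and match against Formula~\eqref{eq-def-tau-B-M-Omega} via $F_{\zz_G(\cc)}$ — is essentially what the paper does in Section~\ref{sect-proof-b}, using Lemma~\ref{lem-TQFTsurg-G}, Lemma~\ref{lem-formul-HUL}, and Formulas~\eqref{eq-compare-1} and~\eqref{expansionofthetorusvector}.

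However, there is a genuine gap in your reduction step. You claim that equality of the closed-manifold invariants, together with non-degeneracy of \emph{both} graph HQFTs, gives the isomorphism. But the non-degeneracy of the state sum graph HQFT $\vert\cdot\vert_\cc$ is not available at this point: as the paper explicitly notes at the end of Section~\ref{sect-main-result}, that non-degeneracy is a \emph{corollary} of Theorem~\ref{thm-comparison} together with the non-degeneracy of $\tau_{\zz_G(\cc)}$. Invoking it here is circular. The paper's reduction via \cite[Lemma 17.2]{TVi5} only assumes non-degeneracy of one of the two functors ($\tau_{\zz_G(\cc)}$) plus left rigidity of $\Cob^G_{\zz_G(\cc)}$, and in exchange requires a second, independent input which you omit entirely: the abstract $\kk$-module isomorphism $\vert\Sigma\vert_\cc\simeq\tau_{\zz_G(\cc)}(\Sigma)$ for every $\zz_G(\cc)$-colored $G$-surface $\Sigma$ (condition (a)). This is proved in Section~\ref{sect-proof-a} by computing the coend $\widetilde C_{\alpha,\beta}=\int^{X\in\zz_\beta(\cc)}\varphi_\alpha(X)^*\otimes X$ over the finitely semisimple category $\zz_\beta(\cc)$, comparing Formula~\eqref{eq-compare-2} for $\vert\Sigma\vert_\cc$ with Formula~\eqref{eq-def-tau-B-Sigma} for $\tau_{\zz_G(\cc)}(\Sigma)$, and this is where the additivity hypothesis on $\cc$ is used. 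Without establishing (a) independently, the non-degeneracy of $\tau_{\zz_G(\cc)}$ and the scalar agreement only give a well-defined injection $\tau_{\zz_G(\cc)}(\Sigma)\to\vert\Sigma\vert_\cc$, not an isomorphism.
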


This theorem yields a surgery computation  of $\vert \cdot \vert_\cc$. For $G=\{1\}$,  Theorem~\ref{thm-comparison}    was  first  established in \cite{TVi1} and  independently (in the case   ${\rm char} (\kk)=0$) in \cite{Ba2}. We prove Theorem~\ref{thm-comparison} in Section~\ref{sect-comparison}.

By Section~\ref{sect-def-graph-HQFT}, any graph HQFT yields a scalar invariant of colored $G$-graphs in closed oriented 3-manifolds. The next claim directly follows from Theorem~\ref{thm-comparison}.

\begin{cor}\label{cor-omegaomega}
For any $\zz_G(\cc)$-colored $G$-graph  $\Omega$  in a closed oriented 3-manifold~$M$,
$$
\tau_{\zz_G(\cc)}(M,\Omega) =\vert M,\Omega \vert_\cc.
$$
\end{cor}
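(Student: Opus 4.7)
The plan is to deduce Corollary~\ref{cor-omegaomega} directly from Theorem~\ref{thm-comparison} as a purely formal consequence, applying the general principle already recorded at the end of Section~\ref{sect-def-graph-HQFT}: isomorphic graph HQFTs assign the same scalar to every colored $G$-graph in a closed oriented 3-manifold. No further input specific to the state sum or surgery construction is needed beyond what Theorem~\ref{thm-comparison} provides.

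Concretely, I would first invoke Theorem~\ref{thm-comparison} to supply a monoidal natural isomorphism
$\rho=\{\rho_\Sigma\co \vert \Sigma \vert_\cc \iso \tau_{\zz_G(\cc)}(\Sigma)\}_\Sigma$
between the two graph HQFTs, where $\Sigma$ ranges over $\zz_G(\cc)$-colored $G$-surfaces. By the definition of an isomorphism of graph HQFTs recalled in Section~\ref{sect-def-graph-HQFT}, $\rho_\emptyset$ is the $\kk$-linear isomorphism $\vert \emptyset \vert_\cc \iso \tau_{\zz_G(\cc)}(\emptyset)$ induced by the unit constraints of the two functors. Next, I would view the pair $(M,\Omega)$ as representing a morphism $(M,\Omega,\id_\emptyset)\co \emptyset \to \emptyset$ in $\Cob^G_{\zz_G(\cc)}$ and apply naturality of $\rho$ to this morphism, obtaining the commutative square
$$
\rho_\emptyset \circ \vert M,\Omega \vert_\cc = \tau_{\zz_G(\cc)}(M,\Omega) \circ \rho_\emptyset
$$
in $\End_\kk(\vert \emptyset \vert_\cc)$. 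Since $\vert M,\Omega \vert_\cc$ and $\tau_{\zz_G(\cc)}(M,\Omega)$ act on their one-dimensional $\kk$-modules by multiplication by the scalars bearing the same names, and $\rho_\emptyset$ is an isomorphism of free rank-one $\kk$-modules, cancelling $\rho_\emptyset$ forces the two scalars to agree.

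There is no genuine obstacle here: all the substantive content sits inside Theorem~\ref{thm-comparison}, and the corollary is merely its scalar shadow obtained by restricting the natural isomorphism $\rho$ to the endomorphisms of the unit object $\emptyset$ of $\Cob^G_{\zz_G(\cc)}$.
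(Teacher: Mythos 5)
Your proposal is correct and is essentially identical to the paper's own argument: the paper states that the corollary "directly follows from Theorem~\ref{thm-comparison}," relying on the observation recorded in Section~\ref{sect-def-graph-HQFT} that isomorphic graph HQFTs assign the same scalar to every colored $G$-graph in a closed oriented 3-manifold. You have simply unwound the naturality square underlying that observation, which is exactly the intended reasoning.
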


The following   corollary of Theorem~\ref{thm-comparison} is Theorem~\ref{thm-comparison-intro} of the introduction:

\begin{cor}\label{iso-additivecase}
The   surgery    HQFT  $\tau_{\zz_G(\cc)} \co \Cob^G  \to \Mod_\kk$ and the state sum   HQFT  $\vert \cdot \vert_\cc \co \Cob^G  \to \Mod_\kk$ are isomorphic.
\end{cor}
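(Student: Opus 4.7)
The plan is to obtain Corollary~\ref{iso-additivecase} as an immediate consequence of Theorem~\ref{thm-comparison} via the graph-extension formalism of Section~\ref{sect-def-graph-HQFT}. First I would combine Theorem~\ref{thm-extension-state-sum-graph-HQFT} with the construction recalled in Section~\ref{sect-surg-HQFT} to produce graph extensions along $\zz_G(\cc)$ of both HQFTs in question: the state sum HQFT $\vert \cdot \vert_\cc$ lifts to a symmetric strong monoidal functor $\Cob^G_{\zz_G(\cc)} \to \Mod_\kk$, and so does the surgery HQFT $\tau_{\zz_G(\cc)}$. In each case one checks that the lift agrees with the original HQFT on the image of the embedding $J\co \Cob^G \hookrightarrow \Cob^G_{\zz_G(\cc)}$ defined in Section~\ref{sect-pptes-cob-G}; this is built into the definitions, since objects and morphisms of $\Cob^G$ correspond to $\zz_G(\cc)$-colored $G$-surfaces with no marked points and $\zz_G(\cc)$-colored $G$-cobordisms with empty ribbon graph, on which no coloring data is consumed.

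Next, Theorem~\ref{thm-comparison} supplies a monoidal natural isomorphism between the two graph HQFTs $\vert \cdot \vert_\cc$ and $\tau_{\zz_G(\cc)}$ viewed as functors $\Cob^G_{\zz_G(\cc)} \to \Mod_\kk$. Restricting this isomorphism along $J$ — that is, taking the components $\rho_\Sigma$ at $\zz_G(\cc)$-colored $G$-surfaces of the form $J(\Sigma)$ where $\Sigma$ is an (uncolored) $G$-surface — yields a family of $\kk$-linear isomorphisms compatible with disjoint unions and with the action of $G$-cobordisms, hence a monoidal natural isomorphism between the underlying 3-dimensional HQFTs. This is exactly the observation recorded in Section~\ref{sect-def-graph-HQFT} that two 3-dimensional HQFTs with isomorphic graph extensions are themselves isomorphic.

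There is no substantial obstacle here beyond bookkeeping: the entire content of the corollary is packaged into Theorem~\ref{thm-comparison}. The one point worth double-checking is that the graph extensions produced by Theorem~\ref{thm-extension-state-sum-graph-HQFT} and Section~\ref{sect-surg-HQFT} really are extensions in the strict sense $\widetilde{Z}\circ J = Z$, rather than only up to isomorphism; but this identification holds on the nose by the construction of both extensions, so composing the graph-level isomorphism with $J$ delivers Corollary~\ref{iso-additivecase}.
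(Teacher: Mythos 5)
Your argument is exactly the paper's: both HQFTs admit graph extensions along $\zz_G(\cc)$, Theorem~\ref{thm-comparison} supplies a monoidal natural isomorphism between the graph extensions, and restricting that isomorphism along the embedding $J\co\Cob^G\hookrightarrow\Cob^G_{\zz_G(\cc)}$ yields an isomorphism of the original HQFTs. The only difference is that you spell out the (routine) verification that the extensions are strict extensions; otherwise the proof coincides with the paper's.
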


\begin{proof}
Both HQFTs extend to graph HQFTs   which are isomorphic by Theorem~\ref{thm-comparison}. Restricting the isomorphism in question   to $\Cob^G$ we obtain an isomorphism of the original    HQFTs.
\end{proof}

Applying Corollary~\ref{cor-omegaomega} to empty $G$-graphs, we get the following:

\begin{cor}
For any closed 3-dimensional  $G$-manifold~$M$,
$$
\tau_{\zz_G(\cc)}(M) =\vert M \vert_\cc.
$$
\end{cor}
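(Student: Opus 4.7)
The plan is to obtain this as the special case of Corollary~\ref{cor-omegaomega} where the $G$-graph is empty. Concretely, given a closed 3-dimensional $G$-manifold $(M,g)$, I would first view it as a $\zz_G(\cc)$-colored $G$-graph $(M,\emptyset)$: the empty ribbon graph $\emptyset\subset M$ is pointed (trivially, with $\emptyset_\bullet=\emptyset$), its $\zz_G(\cc)$-coloring is vacuous since there are no strata and no tracks to color, and the given homotopy class of maps $g\co M\to\X$ serves as the required homotopy class of maps $(M\setminus\emptyset,(\partial M)_\bullet\cup\emptyset_\bullet)\to(\X,\x)$ in the definition of a $G$-graph.

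Next, I would invoke Corollary~\ref{cor-omegaomega} with $\Omega=\emptyset$ to obtain the equality
\[
\tau_{\zz_G(\cc)}(M,\emptyset)=\vert M,\emptyset\vert_\cc\in\kk.
\]

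Finally, I would identify these two scalars with $\tau_{\zz_G(\cc)}(M)$ and $\vert M\vert_\cc$ respectively. This last identification is immediate from the definition of a graph extension (Section~\ref{sect-def-graph-HQFT}): the embedding $J\co\Cob^G\hookrightarrow\Cob^G_{\zz_G(\cc)}$ sends the morphism $\emptyset\to\emptyset$ represented by a closed 3-dimensional $G$-manifold $(M,g)$ to the morphism $\emptyset\to\emptyset$ represented by the $\zz_G(\cc)$-colored $G$-cobordism $(M,\emptyset)$, and the relation $\widetilde{Z}\circ J=Z$ means precisely that the corresponding scalar invariants coincide. Both the state sum HQFT and the surgery HQFT extend to graph HQFTs by Theorem~\ref{thm-extension-state-sum-graph-HQFT} and Section~\ref{sect-surg-HQFT}, so the identifications apply on both sides.

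There is no real obstacle: once Corollary~\ref{cor-omegaomega} is in hand, the present corollary is a purely formal consequence of the fact that graph HQFTs extend the underlying 3-dimensional HQFTs along the embedding~$J$. The entire content of the statement is absorbed into the substantive work done to establish Theorem~\ref{thm-comparison} (and its graph-invariant reformulation Corollary~\ref{cor-omegaomega}).
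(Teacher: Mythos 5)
Your proposal is correct and matches the paper's own argument exactly: the paper deduces this corollary by applying Corollary~\ref{cor-omegaomega} to empty $G$-graphs, and the identifications $\tau_{\zz_G(\cc)}(M,\emptyset)=\tau_{\zz_G(\cc)}(M)$ and $\vert M,\emptyset\vert_\cc=\vert M\vert_\cc$ you spell out are precisely what is implicitly used there via the graph-extension property $\widetilde Z\circ J=Z$.
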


Note finally that Theorem~\ref{thm-comparison} and  the  non-degeneracy of  the  surgery graph   HQFT $\tau_{\zz(\cc)}$  imply that the state sum graph HQFT  $\vert \cdot \vert_\cc$ is non-degenerate.

\subsection{Remark}
Using the language of higher categories, one can define extended HQFTs. Any graph HQFT induces a 2-extended 3-dimensional HQFT in the sense of \cite{SW} with values in the 2-category of 2-vector spaces. This observation uses the fact that the complement of a ribbon tangle with no coupons in a 3-manifold is a 3-manifold with corners of codimension~2. Theorem~\ref{thm-comparison} implies that (under the assumptions of this theorem) the 2-extended 3-dimensional HQFTs induced by
the graph HQFTs  $\vert \cdot \vert_\cc$ and $\tau_{\zz_G(\cc)}$ are  isomorphic.

\section{An invariant of colored knotted nets}\label{sect-prelimoncolorednets}

In this section, we define an invariant of colored knotted nets which generalizes $6j$-symbols and which is used below  to construct a state sum graph HQFT.  Until the end of this section, we assume  $\kk$ to be a field and fix a $G$-fusion  category $\cc$  over~$\kk$.
Recall that the $G$-center $\zz_G(\cc)$ of~$\cc$ is then a $G$-braided category over~$\kk$ (see Section~\ref{sect-center-graded-cat}).
Note that all constructions of this section work word for word for a larger class of $G$-graded categories, namely for non-singular $G$-graded categories over arbitrary commutative rings (see Appendix~\ref{sect-nonsing-graded-cat}).

\subsection{Knotted nets}\label{sect-coloredknottednets}
We  recall  the notion of a knotted net in an oriented surface  introduced in    \cite[Section 15.3.2]{TVi5}.
A  \emph{net} $\Gamma$  is a topological space    obtained from a   disjoint
union of a finite number  of   oriented  circles,  oriented  arcs, and oriented coupons (see Section~\ref{Ribbon graphs})
by gluing    some   endpoints of the arcs to the bases of the coupons or to each other.   We require  that different endpoints of  the arcs are never glued to the same  point of a (base of a) coupon.
The images  in~$\Gamma$ of the   arcs,  half-arcs, circles, and coupons are      called  respectively   \emph{edges},  \emph{half-edges},   \emph{circles}, and \emph{coupons}
of~$\Gamma$.  The images  in~$\Gamma$ of the    endpoints of the arcs  that are not glued to coupons (but may be glued to each other) are called  \emph{vertices}  of~$\Gamma$. Every vertex of~$\Gamma$  is incident to a certain number of half-edges of~$\Gamma$, called the \emph{valency} of the vertex. The edges and   circles of~$\Gamma$ are collectively called  \emph{strands}.

A \emph{knotted net}~$\Gamma$   in an oriented surface~$\Sigma$  is a net  immersed in $\Sigma\setminus \partial \Sigma$ such that
\begin{enumerate}
\labeli
\item   all coupons  of~$\Gamma$ are embedded  in $\Sigma$ preserving   orientation;
\item  all multiple points of the immersion  are double transversal
intersections of   the interiors of strands  of~$\Gamma$. At every   double point, one of the two meeting strands is distinguished.
\end{enumerate}
Note that the coupons and the vertices of a knotted net   are pairwise disjoint.
The double points of a knotted net are called \emph{crossing points} or just \emph{crossings}.  They are finite in number and lie   away from the coupons and the vertices. A  crossing of a knotted net  in $\Sigma$ lies in  an open   disk   in $\Sigma$   represented   in our pictures  by a  plane parallel to the page. The orientation of $\Sigma$
is represented by the counterclockwise orientation
of this plane.  The distinguished strand  at the crossing is represented by a red continuous (unbroken) line:
$$
  \rsdraw{.45}{.9}{croisement-3} \;\,.
$$

Each crossing~$x$ of a knotted net~$\Gamma$   in~$\Sigma$ gives rise to two points on the strands of~$\Gamma$: the overcrossing  $x_{\rm {ov}}$ lying in the distinguished strand  ($x_{\rm {ov}}$ is  represented in our pictures by a red point) and  the undercrossing  $x_{\rm {un}}$. The overcrossings split the strands of~$\Gamma$ into  consecutive   segments called  {\it underpasses}.  A crossing~$x$ of~$\Gamma$ determines three underpasses: the underpass~$\underline x$ containing the point   $x_{\rm {un}}$ and   two underpasses ${x^{-}}, {x^{+}}$ separated by the  point  $x_{\rm {ov}}$. One of the underpasses ${x^{-}}, {x^{+}}$  is directed towards $x_{\rm {ov}}$ and the other one is directed away from $x_{\rm {ov}}$. We choose notation so that  ${x^{+}}$ is directed towards $x_{\rm {ov}}$ if the crossing~$x$ is   positive  and away from $x_{\rm {ov}}$ if~$x$ is   negative:
$$
\psfrag{u}[Bl][Bc]{\scalebox{.9}{$\underline{x}$}}
\psfrag{x}[Bl][Bl]{\scalebox{.9}{$\underline{x}$}}
\psfrag{c}[Bl][Bl]{\scalebox{.9}{$x^-$}}
\psfrag{r}[Bc][Bc]{\scalebox{.9}{$x^+$}}
\rsdraw{.45}{.9}{undercross-n} \qquad \quad
\psfrag{c}[Bc][Bc]{\scalebox{.9}{$x^-$}}\rsdraw{.45}{.9}{undercross-n2} \;\,.
$$

\subsection{Colored knotted nets}\label{sect-coloredknottednets+}
A \emph{$\cc$-coloring} $U$ of a knotted net~$\Gamma$   in an oriented surface~$\Sigma$ comprises three functions. The first function assigns to each     underpass $p$ of $\Gamma$ a homogeneous object $U_p$ of~$\cc$ or of~$\zz_G(\cc)$,  called the \emph{color} of $p$.  We require  that  if~$p$ lies in a  strand of~$\Gamma$
which   is   incident to a coupon or   is the distinguished strand of  at least one  crossing,   then   $U_p\in \zz_G(\cc)$.
For any other~$p$, either $U_p\in \zz_G(\cc)$ or $U_p\in \cc$.
The second function  assigns to every   crossing~$x$ of~$\Gamma$  an isomorphism
\begin{equation}\label{weakiso-}
U_{x}\colon  U_{x^+} \to \varphi_{ \vert U_{\underline{x}} \vert  } (U_{x^-})
\end{equation}
in $\zz_G(\cc)$ called   the \emph{color} of~$x$. Here $\varphi$ is the crossing of the category $\zz_G(\cc)$ and, as usual,   the degree in~$G$ of a  homogenous object~$X$ of a $G$-graded category is denoted by    $ \vert X  \vert  $. If the colors involved  are non-zero objects, then the existence of an isomorphism \eqref{weakiso-}   implies that
$$
\vert U_{x^+} \vert = \vert U_{\underline{x}} \vert^{-1}\,  \vert U_{x^-}\vert \, \vert U_{\underline{x}} \vert \in G .
$$
The third function   assigns to  each coupon~$c$ of   $\Gamma$ a morphism $c_\mathrm{in} \to c^\mathrm{out}$ in $\zz_G(\cc)$, where $c_\mathrm{in}$ (respectively, $c^\mathrm{out}$) is the object  of $\zz_G(\cc)$   determined  in the usual way   by the colors and orientations of the inputs (respectively, outputs) of~$c$. This morphism $c_\mathrm{in} \to c^\mathrm{out}$  is called the \emph{color} of~$c$.

A \emph{knotted $\cc$-net} is a knotted net endowed with a $\cc$-coloring.
Given a knotted $\cc$-net $\Gamma$   in~$\Sigma$,    the \emph{subcolor} of an underpass colored with   an object~$X$   is~$X$ itself  if $X\in \cc$   and is the image $\uu(X) \in \cc$
of~$X$ under the forgetful functor  $\uu \co \zz_G(\cc) \to \cc$ if   $X \in \zz_G(\cc)$.  The \emph{subcolor} of a crossing/coupon is the image of its color  under the   functor  $ \uu$. Note that the above notion of a knotted $\cc$-net   generalizes the one in \cite[Section 15.3.3]{TVi5}
where $G=1$ and the crossings are not colored.

We will use the notions of a cyclic $\cc$\ti set and its associated multiplicity module, see
\cite[Chapter 12]{TVi5}. A vertex~$v$ of a knotted $\cc$-net~${\Gamma}$   in~$\Sigma$    determines a cyclic $\cc$\ti set $ (E_v, c_v, \varepsilon_v)$ as follows:  $E_v$ is the set of half-edges of~$\Gamma$ incident to~$v$ with cyclic order induced by the
opposite orientation of~$\Sigma$, the map $c_v\co E_v \to \Ob(\cc)$ assigns to a half-edge
$e\in E_v$  the subcolor of the edge of~$\Gamma$ containing~$e$, and   the map   $\varepsilon_v \co E_v \to \{+,-\}$ assigns to
$e\in E_v$  the sign~$+$ if~$e$ is oriented towards $v$ and $-$ otherwise. Let $H_v(\Gamma)=H(E_v )$ be the  multiplicity   module  of $E_v$.
The \kt  module $H_v(\Gamma)$ can be described   as follows. Let $n \geq 1$ be the valence of~$v$ and let
$e_1 < e_2 < \cdots < e_n < e_1$ be the half-edges of~$\Gamma$ incident to~$v$ with cyclic order induced by the
opposite orientation of~$\Sigma$. Then  we have the cone isomorphism
$$
\tau^v_{e_1} \co H_v(\Gamma) \stackrel{\simeq} {\longrightarrow}  \Hom_\cc(\un, X_1^{\varepsilon_1} \otimes  \cdots \otimes X_n^{\varepsilon_n} ).
$$
where $X_r=c_v(e_r)$ and $\varepsilon_r=\varepsilon_v(e_r)$ are the subcolor  and sign of~$e_r$ for all~$r$.  Set
$$
H(\Gamma)=\otimes_v \, H_v(\Gamma),
$$
where $v$ runs over all vertices of~$\Gamma$ and   $\otimes$ is the unordered tensor product  of \kt modules.
To emphasize the role of~$\Sigma$, we   sometimes write $H_v(\Gamma;\Sigma)$ for~$H_v(\Gamma)$ and
$H(\Gamma;\Sigma)$ for~$H(\Gamma)$.   If $\Gamma$ has no vertices, then by definition $ H({\Gamma})=\kk$.

An \emph{isotopy} of knotted $\cc$-nets    in~$\Sigma$ is an ambient isotopy  in the class of knotted $\cc$-nets  in  $\Sigma$  preserving  all the data, that is the vertices, the strands, the crossings (with their   distinguished strand),
the coupons (with their distinguished base),  the orientations,   and the colors. An  isotopy
between two  knotted $\cc$-nets $\Gamma$ and $\Gamma'$  in~$\Sigma$ induces   a \kt linear isomorphism   $ H({\Gamma}) \to H({\Gamma'})$ in the obvious way.

Any orientation preserving embedding $f $ of $\Sigma$ into an oriented surface $\Sigma'$ carries a   knotted $\cc$-net $\Gamma$ in $ \Sigma$  into  a knotted $\cc$-net $\Gamma'  =f(\Gamma)   $ in $ \Sigma'$ and
induces a \kt linear isomorphism
$
H(f)\co   H(\Gamma;\Sigma ) \to H(\Gamma';\Sigma')
$
in the obvious way. This applies,   in particular, when   $f$ is an  orientation preserving self-homeomorphism of~$\Sigma$.

\subsection{An invariant of knotted $\cc$-nets in $\R^2$}\label{sect-inv-coloredknottednets}
Let $\Gamma$ be a knotted $\cc$-net in  the plane $\R^2$ (oriented counterclockwise).
Pick a   vector $\alpha_v \in H_v({\Gamma})$ for  every   vertex~$v$   of~$\Gamma$ and  transform $\Gamma$ at its vertices, crossings, and coupons to obtain a Penrose diagram as follows. First, at   each     vertex~$v$ of~$\Gamma$,
pick  a half-edge $e_v\in E_v$, isotop $\Gamma$ near~$ v $  so that   the half-edges incident to~$v$   lie
above~$v$ with respect to the second coordinate on~$\R^2$ and~$e_v$ is the leftmost of them,  and   replace
$v$ by a box colored with $\tau^v_{e_v}(\alpha_v)$, where   $\tau^v$
is the universal cone of $H_v(\Gamma)$:
$$
  \psfrag{R}[Bc][Bc]{\scalebox{.9}{$\R^2$}}
  \psfrag{h}[Bc][Bc]{\scalebox{.9}{$\tau^v_{e_v}(\alpha_v)$}}
  \psfrag{e}[Bl][Bl]{\scalebox{.9}{$e_v$}}
  \psfrag{v}[Br][Br]{\scalebox{.9}{$v$}}
\rsdraw{.45}{.9}{isotop-vertex-1} \quad \rsdraw{.45}{.8}{fleche} \quad
  \psfrag{e}[Br][Br]{\scalebox{.9}{$e_v$}}
  \psfrag{v}[Bc][Bc]{\scalebox{.9}{$v$}}
\rsdraw{.45}{.9}{isotop-vertex-2} \quad \rsdraw{.45}{.8}{fleche} \quad \rsdraw{.45}{.9}{isotop-vertex-3} \,.
$$
Next, at each crossing~$x$ of~$\Gamma$,  isotop~$\Gamma$ near~$x$ to make to  ensure that the strands are oriented downward.
Consider the color $(A,\sigma) \in \zz_G(\cc)$ of the underpass $x^+$, the color $(B,\gamma) \in \zz_G(\cc)$ of the underpass $x^-$, the subcolor $X\in \cch$ of the underpass~$\underline{x}$, and the subcolor $\psi\co  A \to \uu\bigl(\varphi_{|X|}(B,\gamma)\bigr)$ of the crossing $x$. If the crossing~$x$ is positive, then replace $x$ as follows:
$$
\psfrag{u}[Bl][Bc]{\scalebox{.9}{$\underline{x}$}}
\psfrag{x}[Bl][Bl]{\scalebox{.9}{$\underline{x}$}}
\psfrag{c}[Bl][Bl]{\scalebox{.9}{$x^-$}}
\psfrag{r}[Bc][Bc]{\scalebox{.9}{$x^+$}}
\psfrag{X}[Bl][Bl]{\scalebox{.9}{$X$}}
\psfrag{Y}[Br][Br]{\scalebox{.9}{$X$}}
\psfrag{n}[Bc][Bc]{\scalebox{.9}{$\tau_{(B,\gamma),X}$}}
\psfrag{v}[Bc][Bc]{\scalebox{.9}{$\psi^{-1}$}}
\psfrag{s}[Bc][Bc]{\scalebox{.9}{$\tau_{(B,\gamma),X}^{-1}$}}
\psfrag{e}[Bc][Bc]{\scalebox{.9}{$\psi$}}
\psfrag{E}[Bl][Bl]{\scalebox{.9}{$A$}}
\psfrag{D}[Bl][Bl]{\scalebox{.9}{$\uu\bigl(\varphi_{|X|}(B,\gamma)\bigr)$}}
\psfrag{C}[Br][Br]{\scalebox{.9}{$B$}}
\rsdraw{.45}{.9}{sig-plus}
$$
where $\tau$ is the enhanced $G$-braiding of $\zz_G(\cc)$, see Appendix~\ref{sect-appendix-enhanced-G-braid}. If the crossing~$x$  is negative, then  replace $x$ as follows:
$$
\psfrag{u}[Bl][Bc]{\scalebox{.9}{$\underline{x}$}}
\psfrag{x}[Bl][Bl]{\scalebox{.9}{$\underline{x}$}}
\psfrag{c}[Bc][Bc]{\scalebox{.9}{$x^-$}}
\psfrag{r}[Bc][Bc]{\scalebox{.9}{$x^+$}}
\psfrag{X}[Bl][Bl]{\scalebox{.9}{$X$}}
\psfrag{Y}[Br][Br]{\scalebox{.9}{$X$}}
\psfrag{n}[Bc][Bc]{\scalebox{.9}{$\tau_{(B,\gamma),X}$}}
\psfrag{v}[Bc][Bc]{\scalebox{.9}{$\psi^{-1}$}}
\psfrag{s}[Bc][Bc]{\scalebox{.9}{$\tau_{(B,\gamma),X}^{-1}$}}
\psfrag{e}[Bc][Bc]{\scalebox{.9}{$\psi$}}
\psfrag{E}[Bl][Bl]{\scalebox{.9}{$A$}}
\psfrag{D}[Bl][Bl]{\scalebox{.9}{$\uu\bigl(\varphi_{|X|}(B,\gamma)\bigr)$}}
\psfrag{C}[Br][Br]{\scalebox{.9}{$B$}}
\rsdraw{.45}{.9}{sig-moins} \,.
$$
Finally, at each coupon~$c$  of~${\Gamma}$,  isotop~$\Gamma$ near~$c$ to make the bases of~$c$   horizontal  and to ensure that the distinguished (bottom) base lies below the opposite  base   with respect to the second coordinate on $\RR^2$, replace~$c$ by a box with the same inputs and outputs as~$c$, and label this box with the subcolor of~$c$:
$$
\psfrag{M}[Br][Br]{\scalebox{.9}{\textcolor{red}{$(A_1,\sigma_1)$}}}
\psfrag{W}[Br][Br]{\scalebox{.9}{\textcolor{red}{$(B_1,\gamma_1)$}}}
\psfrag{X}[Bl][Bl]{\scalebox{.9}{\textcolor{red}{$(A_n,\sigma_m)$}}}
\psfrag{Y}[Bl][Bl]{\scalebox{.9}{\textcolor{red}{$(B_n,\gamma_n)$}}}
\psfrag{A}[Br][Br]{\scalebox{.9}{$A_1$}}
\psfrag{B}[Br][Br]{\scalebox{.9}{$B_1$}}
\psfrag{C}[Bl][Bl]{\scalebox{.9}{$A_n$}}
\psfrag{D}[Bl][Bl]{\scalebox{.9}{$B_n$}}
\psfrag{u}[Bc][Bc]{\scalebox{.9}{\textcolor{red}{$f$}}}
\psfrag{v}[Bc][Bc]{\scalebox{.9}{$f$}}
\rsdraw{.45}{.9}{transform-coupon} \,.
$$
Also,   the colors of all edges of~$\Gamma$ are traded for  the corresponding  subcolors. This turns~$\Gamma$ in  a $\cc$-colored Penrose diagram without free ends. The Penrose calculus associates with  this diagram an element of $ \End_\cc(\un)$  denoted $\inv_\cc({\Gamma})(\otimes_v \, \alpha_v)$. By linear extension, this procedure defines a \kt linear homomorphism
$$
\inv_\cc(\Gamma)\co H(\Gamma) =\otimes_v \, H_v(\Gamma) \to \End_\cc(\un).
$$
This homomorphism is an isotopy invariant of   the   $\cc$-colored graph $\Gamma$. More precisely, for any isotopy~$\iota$ between  two knotted $\cc$-nets $\Gamma$ and $\Gamma'$ in $\R^2$,
we have  $\inv_\cc(\Gamma')\, H(\iota)=\inv_\cc(\Gamma)$, where $H(\iota)\co H(\Gamma) \to H(\Gamma')$ is the  \kt linear   isomorphism induced by $\iota$.

\subsection{An example}
Let $\Gamma$ be the following knotted $\cc$-net in $\R^2$:
$$
 \psfrag{A}[Br][Br]{\scalebox{.9}{\textcolor{red}{$(A,\sigma)$}}}
 \psfrag{B}[Br][Br]{\scalebox{.9}{\textcolor{red}{$(B,\gamma)$}}}
 \psfrag{C}[Br][Br]{\scalebox{.9}{\textcolor{red}{$(C,\omega)$}}}
 \psfrag{D}[Bl][Bl]{\scalebox{.9}{\textcolor{red}{$(D,\delta)$}}}
 \psfrag{E}[Bl][Bl]{\scalebox{.9}{\textcolor{red}{$(E,\varepsilon)$}}}
 \psfrag{X}[Br][Br]{\scalebox{.9}{$X$}}
 \psfrag{r}[Bc][Bc]{\scalebox{.9}{\textcolor{red}{$f$}}}
 \psfrag{a}[Bl][Bl]{\scalebox{.9}{\textcolor{red}{$\phi$}}}
 \psfrag{c}[Bl][Bl]{\scalebox{.9}{\textcolor{red}{$\psi$}}}
 \psfrag{u}[Bc][Bc]{\scalebox{.9}{$u$}}
 \psfrag{v}[Bc][Bc]{\scalebox{.9}{$v$}}
\Gamma=\,\rsdraw{.45}{.9}{Ex-knot-net-1}  \,.
$$
The net $\Gamma$ has two vertices $u$ and $v$, six underpasses colored $(A,\sigma)$, $(B,\gamma)$, $(C,\omega)$, $(D,\delta)$,
$(E,\varepsilon)\in \zz_G(\cc)_{\mathrm{hom}}$ and $X \in \cch$, one coupon colored by a morphism in~$\zz_G(\cc)$
$$
f \co (E,\varepsilon) \to (B,\gamma)^* \otimes (A,\sigma),
$$
and two crossings colored by isomorphisms in~$\zz_G(\cc)$
$$
\phi \co (B,\gamma) \to \varphi_{|X|}(A,\sigma) \quad \text{and} \quad \psi \co (D,\delta) \to \varphi_{|B|}(C,\omega).
$$
Clearly, $H(\Gamma)=H_u(\Gamma) \otimes H_v(\Gamma)$. Pick $\alpha \in H_u(  \Gamma  )$ and $\beta \in  H_v(  \Gamma   ) $. Then, by definition,  we have
$$
  \psfrag{D}[Bl][Bl]{\scalebox{.9}{$\uu\bigl(\varphi_{|X|}(A,\sigma)\bigr)$}}
  \psfrag{Q}[Bl][Bl]{\scalebox{.9}{$\uu\bigl(\varphi_{|B|}(C,\omega)\bigr)$}}
  \psfrag{e}[Bc][Bc]{\scalebox{.9}{$\psi$}}
  \psfrag{v}[Bc][Bc]{\scalebox{.9}{$\phi^{-1}$}}
  \psfrag{s}[Bc][Bc]{\scalebox{.9}{$\tau_{(C,\omega),B}^{-1}$}}
  \psfrag{n}[Bc][Bc]{\scalebox{.9}{$\tau_{(A,\sigma),X}$}}
  \psfrag{b}[Bc][Bc]{\scalebox{.9}{$\tilde{\alpha}$}}
  \psfrag{f}[Bc][Bc]{\scalebox{.9}{$\tilde{\beta}$}}
  \psfrag{X}[Bl][Bl]{\scalebox{.9}{$X$}}
  \psfrag{Y}[Bl][Bl]{\scalebox{.9}{$X$}}
  \psfrag{C}[Br][Br]{\scalebox{.9}{$A$}}
  \psfrag{E}[Bl][Bl]{\scalebox{.9}{$B$}}
  \psfrag{V}[Bl][Bl]{\scalebox{.9}{$D$}}
  \psfrag{T}[Br][Br]{\scalebox{.9}{$B$}}
  \psfrag{P}[Bl][Bl]{\scalebox{.9}{$C$}}
  \psfrag{W}[Bl][Bl]{\scalebox{.9}{$E$}}
  \psfrag{k}[Bc][Bc]{\scalebox{.9}{$f$}}
  \inv_\cc (\Gamma)(\alpha \otimes \beta) \, = \,\rsdraw{.45}{.9}{Ex-knot-net-3}
$$
where
$\tilde{\alpha}$ and $\tilde{\beta}$ are the ismages of $\alpha$ and $\beta$ under the cone isomorphisms
$$
H_u(\Gamma)  \stackrel{\simeq} {\longrightarrow} \Hom_\cc(\un,X^* \otimes C^*) \quad \text{and} \quad
H_v(\Gamma)  \stackrel{\simeq} {\longrightarrow}\Hom_\cc(\un,E \otimes X \otimes D).
$$

\subsection{An invariant of knotted $\cc$-nets in 2-spheres}\label{sect-nets-in-sphere}
Suppose   that  the category~$\cc$ is spherical. Then the invariant    $\inv_\cc$  of knotted $\cc$-nets in $\R^2$ defined in Section~\ref{sect-inv-coloredknottednets}  extends uniquely to an isotopy invariant of knotted $\cc$-nets in  the 2-sphere
 $S^2=\R^2 \cup \{\infty\}$ endowed with   the  orientation   extending the
counterclockwise orientation in $\R^2$.
Indeed,  consider  a knotted $\cc$-net $\Gamma$ in~$S^2$. Pushing $\Gamma$ away from $ \infty $ by an isotopy, we obtain  a $\cc$-colored   graph $\Gamma_0$ in $
\R^2$. The isotopy induces  a \kt linear isomorphism $H (\Gamma;S^2)\simeq H  (\Gamma_0;\R^2)$. Composing    with $\inv_\cc(G_0)\co H (\Gamma_0;\R^2)\to \End_\cc(\un)$ we obtain a \kt linear homomorphism
$$
\inv_\cc(\Gamma) \co  H (\Gamma;S^2)\to \End_\cc(\un).
$$
The sphericity of $\cc$ implies that this homomorphism does not depend on the way we push~$\Gamma$ in~$\R^2$ and is an isotopy invariant of $\Gamma$.

The invariant $\inv_\cc$ further extends to knotted $\cc$-nets in   an arbitrary oriented  surface~$\Sigma$ homeomorphic to $S^2$. Namely, given a knotted $\cc$-net $\Gamma$ in $\Sigma$, pick an orientation preserving homeomorphism $f \co \Sigma \to S^2$, consider the induced \kt linear isomorphism   $H(f) \co H(\Gamma; \Sigma) \to H(f(\Gamma);S^2)$, and set
$$
\inv_\cc (\Gamma)=\inv_\cc (f(\Gamma))\circ H(f) \co  H(\Gamma ;\Sigma)\to  \End_\cc(\un).
$$
Since all orientation preserving homeomorphisms $\Sigma \to   S^2$  are isotopic, this homomorphism  does not depend on the choice of~$f$ and is an isotopy invariant of~$\Gamma$.

We view $\inv_\cc(\Gamma)$ as a generalization of the familiar  $6j$-symbols  which arise when $\Gamma \subset S^2$ is the 1-skeleton of a tetrahedron.

\section{The state sum graph HQFT}\label{sect-construct-stste-sum-HQFT}
In this section we construct state sum graph HQFTs over the $G$-centers of spherical $G$-fusion categories. This construction is based on a presentation of ribbon graphs in 3-manifolds by knotted plexuses in skeletons introduced in  \cite[Chapter 14]{TVi5} where we refer for details.

\subsection{Knotted plexuses in skeletons} \label{sect-knotted-plexuses-in-skel}
Let $M$ be a closed oriented 3-manifold. A \emph{skeleton} of~$M$   is an oriented  compact  2-dimensional  complex $P\subset M$ whose complement in~$M$ is a disjoint union of open 3-balls  called  the    \emph{$P$-balls}. A \emph{vertex} of~$P$ is a point of the 0-skeleton $P^{(0)}$ of~$P$. We let $P^{(1)}$ be the 1-skeleton of~$P$. Here~$P$ oriented means that the surface $\Int(P)=P \backslash P^{(1)}$ is oriented. The set $P^{(1)} \setminus P^{(0)}$ is a finite disjoint union of   open intervals whose closures   are     called   the \emph{edges} of~$P$.

By a \emph{plexus}, we mean a topological space    obtained from a   disjoint union of a finite number  of arcs
and coupons by gluing  the   endpoints of the arcs to the bases of the coupons. We require  that different endpoints of  the arcs are never glued to the same  point of a (base of a) coupon.

A \emph{knotted plexus}~$d$ in~$P$ is  a  plexus drawn (i.e., immersed) in $P \backslash P^{(0)}$    possibly with double  crossings of arcs in $\Int(P)$  so that  at every  crossing, one of   two arcs is  distinguished. All coupons of~$d$ must lie in   $\Int(P) $ while the arcs of~$d$  may meet the edges of $P$ transversely at a finite number of points  called the  \emph{switches} of~$d$. A neighborhood in~$P$ of a switch of $d$  is formed by a finite number (greater than or equal to~$2$) of half-planes adjacent to the edge of $P$ containing the switch so that the plexus~$d$ meets these  half-planes   along a segment  contained in the union of two   of them:
  \begin{center}
  \psfrag{d}[Bc][Bc]{\scalebox{1}{\textcolor{red}{$d$}}}
  \rsdraw{.45}{.9}{neighborhood-3n} \;.
  \end{center}
We assume that the given orientations of the regions of~$P$  containing these two half-planes are compatible near the switch,  i.e., they are   induced by an orientation of the horizontal plane in the above figure.
(The pair $(P,d)$ is called a positive diagram without circle components in \cite{TVi5}.)

Every knotted plexus~$d$ in~$P$ determines a ribbon graph in~$M$ as follows. Pick a field of normal directions~$n$ on
$\Int(P)$ such that the orientation of $\Int(P)$ followed by~$n$ yields the opposite
orientation of~$M$. Slightly pushing   the undistinguished
strands  at the crossings of~$d$ along~$n$,  we obtain an embedding $ d\hookrightarrow M$ whose image is denoted   $d^n$.  The above orientation condition at the switches implies that
the vector field~$n$ can be chosen to continuously extend to all switches and to determine thus a framing of~$d^n$.
This turns $d^n$ into a ribbon graph  in~$M$. We say that a ribbon graph   in $M$ is   \emph{represented} by $d$ if it is isotopic to~$d^n$.

By \cite[Corollary 14.5]{TVi5}, all ribbon graphs   in $M$ can be represented by knotted plexus in skeletons of $M$, and two knotted plexus in skeletons of $M$ represent  isotopic ribbon graphs if they can be related by certain moves generalizing the Reidemeiter moves to skeletons.

\subsection{An invariant of colored $G$-graphs} \label{sect-the-inv-for-MR}
In the rest  of this section, $\cc$ is a spherical $G$-fusion category over a field  $\kk$ such that $\dim(\cc_1) \neq 0$. Recall from Section~\ref{sect-center-graded-cat} that the $G$-center $\zz_G(\cc)$ of~$\cc$ is then a $G$-ribbon category\footnote{More generally, the constructions of Section~\ref{sect-construct-stste-sum-HQFT} work when $\cc$ is a spherical $G$-fusion category over a commutative ring such that $\dim(\cc_1)$ is invertible and all idempotents in $\cc$ split (see Appendix~\ref{sect-nonsing-graded-cat}). Indeed, in this case,  $\zz_G(\cc)$ is also a $G$-ribbon category.}.  We  define here   a state sum invariant
$|M, \Omega|_\cc  \in \kk$ for any $\zz_G(\cc)$-colored $G$-graph~$\Omega$ in   a closed oriented 3-manifold~$M$.

Let $P$ be a skeleton of $M$ and $d$ be a knotted plexus in  $P$ representing the underlying ribbon graph of~$\Omega$ (see Section~\ref{sect-knotted-plexuses-in-skel}). The vertices   of~$P$ and the   switches, crossings, and   coupons of~$d$ are   called    the \emph{nodes} of~$d$. The complement of the nodes in~$\tilde{d}= d \cup P^{(1)} \subset P$ is a finite disjoint union of   open intervals. Their  closures   are     called   the \emph{rims} of~$d $. Each rim~$e$ lies  in~$P^{(1)}$ or  in~$d$ and connects two nodes  (possibly, equal) called the \emph{endpoints} of~$e$.  By an \emph{oriented rim}  we mean a  rim   endowed with  orientation (which may be compatible or not with  the orientation of the strand of~$d$ containing this rim).
Cutting~$P$   along $\tilde{d}$,   we obtain a compact surface (with interior $P \setminus \tilde{d}$)
whose  components  are called the \emph{faces} of~$d$. We let $\Fac(d)$ be the (finite) set of faces of~$d$.

We  define a map  $  \ell \co \Fac(d) \to G$, called the {\it $G$-labeling} of~$d$.  In every $P$-ball, pick a point, called its \emph{center}.  Pick  in the given homotopy class of maps  $  (M \setminus \Omega, \Omega_\bullet) \to (\X,\x)$ a representative $g \co M \setminus \Omega \to \X$ carrying the centers of all $P$-balls to~$\x$.
Clearly, each face~$r$ of~$d$ is adjacent to two (possibly coinciding) $P$-balls. Pick an oriented arc  in $M$
connecting the centers of these balls and meeting~$P$  transversely in a single point lying in the interior of $r$. We  orient this arc so  that its intersection number with the face~$r$ is equal to $+1$.  Applying~$g$ to the resulting oriented arc we get  a loop in $\X$ representing $\ell(r)\in G=\pi_1(\X,\x)$.

We now use the given $\zz_G(\cc)$-coloring of the $G$-graph $\Omega$ to color the coupons and the crossings of~$d$, as well  as the rims of~$d$ lying in~$d$. To this end, pick a 1-system of tracks $\{\gamma_{\lambda}\}_{\lambda}$ and a 2-system of tracks $\{\gamma_c\}_{c}$ for $\Omega$, where~$\lambda$ runs over the arcs and~$c$ runs over the coupons of~$\Omega$.
First,  consider a rim $e \subset d$.  Slightly pushing~$e$ along the framing of $\Omega$,  we obtain a  parallel copy $\widetilde e$  of~$e$ lying  inside   a   $P$-ball. Let~$\delta_e$ be a path in this $P$-ball leading from its  center  to $\widetilde e$. Let $\lambda=\lambda_e$ be   the arc of $\Omega$ supporting~$e$.  Then, up to   homotopy    in $M\setminus \Omega$, we have $\delta_e=\alpha_e \gamma_{\lambda}$ for a unique homotopy class of paths $\alpha_e$  in $M \setminus \Omega$ leading  from the center of  the $P$-ball above  to the point $\gamma_\lambda(0)\in \Omega_\bullet$.  The map~$g$  carries  $\alpha_e$ to an  element of $G=\pi_1(\X ,\x)$ again denoted~$\alpha_e$.  The given precoloring~$u$ of~$\Omega$ yields an object $u_{\gamma_{\lambda}} \in \zz_G(\cc)$. The \emph{color} of~$e$ is the object
$$
U_e=\varphi_{\alpha_e^{-1}}(u_{\gamma_{\lambda}})\in \zz_G(\cc)
$$
which is homogeneous of degree $\alpha_e\mu_{\gamma_{\lambda}}\alpha_e^{-1} \in G$. Next, consider a crossing~$x$ of~$d$. Let $e^+,e^-$ be  the rims adjacent to~$x$ and  lying in the distinguished strand of~$x$.
We choose notation so that  $e^+$ is directed towards $x$ if the crossing $x$ is   positive  and away from $x$ if $x$ is   negative:
$$
\psfrag{x}[Bl][Bl]{\scalebox{.9}{$x$}}
\psfrag{c}[Br][Br]{\scalebox{.9}{$e^-$}}
\psfrag{r}[Bl][Bl]{\scalebox{.9}{$e^+$}}
\rsdraw{.45}{.9}{undercross-k1}
\qquad \quad
\psfrag{o}[Bl][Bl]{\scalebox{.9}{$e^+$}}
\psfrag{s}[Br][Br]{\scalebox{.9}{$e^-$}}
\rsdraw{.45}{.9}{undercross-k2} \;\,.
$$
The \emph{color} of~$x$ is the isomorphism
$$
U_x=\bigl(\varphi_2(\alpha_{e^-}\alpha_{e^+}^{-1}, \alpha^{-1}_{e^-})_{u_{\gamma_\lambda}}\bigr)^{-1} \co
U_{e^+} \to \varphi_{\alpha_{e^-}\alpha_{e^+}^{-1}}(U_{e^-}),
$$
where $\lambda$  is the arc of~$\Omega$ supporting $e^+ \cup e^-$.
Finally,  consider a coupon~$c$ of~$d$. Pushing~$c$ along the framing of $\Omega$,  we obtain a parallel copy $\widetilde c$  of~$c$  lying  inside   a   $P$-ball $B_c$.  Let $\delta_c$ be a path in $B_c$ leading from its  center  to $\widetilde c$.
Then  up to   homotopy    in $M\setminus \Omega$, we have $\delta_c=\alpha_c \gamma_{c}$ for a unique    homotopy class of paths $\alpha_c$  in $M \setminus \Omega$ leading  from the center of $B_c$ to the point $\gamma_c(0)\in \Omega_\bullet$.
Let $m \geq 0$   be the number of inputs  of~$c$  and let $e_k$ be the rim of $d$ incident to the $k$-th input for $k=1, \dots, m$. Set   $\varepsilon_k=+$    if $e_k$ is directed out of~$c$ at the $k$-th input and  $\varepsilon_k=-$ otherwise. Let~$\lambda_k$ be   the arc of $\Omega$ supporting~$e_k$ and set $\gamma_k=\gamma_{\lambda_k}$. Composing $\delta_c$ with a   path in~$\widetilde  c$ leading   to the $k$-th input, we obtain a path $\delta_k$  in $B_c$ which expands uniquely as $\delta_k=\alpha_k \gamma_k$  for a unique homotopy class of paths $\alpha_k$  in $M \setminus \Omega$ leading  from the center of  $B_c$ to the point $\gamma_k(0)=\gamma_c(0)\in \Omega_\bullet$. By definition, the color of the rim $e_k$ is
$$
U_{e_k}=\varphi_{\alpha_k^{-1}}(u_{\gamma_k})\in \zz_G(\cc).
$$
Similarly, let $n\geq 0$   be the number of outputs  of~$c$ and let $e^l$ be the arc  of $\Omega$ incident to the $l$-th output for $l=1, \dots, n$. Set $\varepsilon^l =+$ if $e^l$ is directed into~$c$ at the $l$-th output and $\varepsilon^l =-$   otherwise.
Let $\lambda^l$ be   the arc of $\Omega$ supporting~$e^l$ and set $\gamma^l=\gamma_{\lambda^l}$.  Composing $\delta_c$ with a   path in~$\widetilde  c$ leading   to the $l$-th output, we obtain a path $\delta^l$  in~$B_c$ which expands uniquely as $\delta^l=\alpha^l \gamma^l$  for a unique homotopy class of paths $\alpha^l$  in $M \setminus \Omega$ leading  from the center of  $B_c$ to the point $\gamma^l(0)=\gamma_c(0)\in \Omega_\bullet$.
By definition, the color of the rim $e^l$ is
$$
U_{e^l}=\varphi_{(\alpha^l)^{-1}}(u_{\gamma^l})\in \zz_G(\cc).
$$
The given coupon-coloring~$v$ of~$\Omega$  yields  the morphism
$$
v_{\gamma_c} \colon u_{\gamma_c}=\bigotimes_{k=1}^m
u_{\rho_k}^{\varepsilon_k} \longrightarrow u^{\gamma_c}=\bigotimes_{\ell=1}^n
u_{\rho^\ell}^{\varepsilon^\ell}
$$
where $\rho_k$ is the composition of $\gamma_c$ with a   path in~$\widetilde  c$ leading   to the $k$-th input and
$\rho^l$ is the composition of $\gamma_c$ with a   path in~$\widetilde  c$ leading   to the $l$-th output. Note that
$$
\rho_k=\alpha_c^{-1} \delta_k=\alpha_c^{-1}\alpha_k \gamma_k
\quad\text{and}\quad
\rho_l=\alpha_c^{-1} \delta^l=\alpha_c^{-1}\alpha^l \gamma^l.
$$
For a track $\gamma$ of an arc incident to $c$ and a  homotopy class of paths $\alpha$  in $M \setminus \Omega$ leading  from the center of  $B_c$ to the point $\gamma(0)=\gamma_c(0)\in \Omega_\bullet$, the precoloring $u$ yields an isomorphism
$
u_{\alpha_c^{-1}\alpha, \gamma}
\co u_{\alpha_c^{-1}\alpha \gamma} \to \varphi_{\alpha^{-1} \alpha_c}(u_\gamma)$. Consider the isomorphisms
$$
\psi(\alpha,\gamma,+)=\varphi_2(\alpha_c^{-1}, \alpha^{-1}\alpha_c)_{u_\gamma} \circ \varphi_{\alpha_c^{-1}}(u_{\alpha_c^{-1}\alpha, \gamma})
\co \varphi_{\alpha_c^{-1}}(u_{\alpha_c^{-1}\alpha \gamma}) \to \varphi_{\alpha^{-1}}(u_\gamma).
$$
and
$$
\psi(\alpha,\gamma,-)=
\bigl(\psi(\alpha,\gamma,+)^{-1}\bigr)^*\circ \varphi^1_{\alpha_c^{-1}}(u_{\alpha_c^{-1}\alpha \gamma})
\co \varphi_{\alpha_c^{-1}}(u_{\alpha_c^{-1}\alpha \gamma}^*) \to \varphi_{\alpha^{-1}}(u_\gamma)^*.
$$
The \emph{color} of the coupon $c$ is the morphism
$$
U_c \colon \bigotimes_{k=1}^m
U_{e_k}^{\varepsilon_k}\longrightarrow
\bigotimes_{\ell=1}^n
U_{e^l}^{\varepsilon^\ell}
$$
in $\zz_G(\cc)$ defined as the following composition:
$$
\xymatrix@R=1cm @C=2.6cm {
\bigotimes\limits_{k=1}^m
U_{e_k}^{\varepsilon_k} \ar@{.>}[d]_-{U_c} \ar[r]^-{\bigotimes\limits_{k=1}^m
\psi(\alpha_k,\gamma_k,\varepsilon_k)^{-1}} &
\bigotimes\limits_{k=1}^m \varphi_{ {\alpha_c^{-1}} }  (u_{\rho_k}^{\varepsilon_k})    \ar[r]^-{(\varphi_{ {\alpha_c^{-1}} })_m}
 &\varphi_{ \alpha_c^{-1} }(u_{\gamma_c})  \ar[d]^-{\varphi_{\alpha_c^{-1}} (v_{\gamma_c})}\\
\bigotimes\limits_{\ell=1}^n
U_{e^l}^{\varepsilon^\ell}
 &  \bigotimes\limits_{l=1}^n  \varphi_{{\alpha_c^{-1}}} (u_{\rho^l}^{\varepsilon^l})
 \ar[l]_-{\bigotimes\limits_{l=1}^n   \psi(\alpha^l,\gamma^l,\varepsilon^l)}
& \ar[l]_-{(\varphi_{{\alpha_c^{-1}}})_n^{-1}} \varphi_{ \alpha_c^{-1} } (u^{\gamma_c}).}
$$

Fix from now on  a representative set $I=\amalg_{\alpha\in G}\, I_\alpha$
of simple objects of~$\cc$. A \emph{$G$-coloring} of  $d$  is a map $\cl \co \Fac(d) \to I$ such that $\cl (r)\in
I_{\ell (r)}$ for all faces~$r$ of~$d$. The object $\cl(r)$ assigned to a face~$r$ of~$d$ is called   the \emph{$\cl$-color} of~$r$.
For each   $G$-coloring~$\cl$ of $d$, we   define a scalar $\vert \cl  \vert \in \kk$ as follows.

First, for each  oriented rim~$e$ of~$ {d}$,  we define  a   cyclic $\cc$\ti set $P_{e, \cl}$. If $e\subset P^{(1)}$, then
$P_{e, \cl}$ is the set of germs  of  faces of $d$  adjacent to~$e$ turned into a cyclic $\cc$\ti set  as in
\cite[Section~13.1.1]{TVi5} using~$\cl$. Explicitly, the orientations of~$e$ and~$M$ determine a positive
direction on a small loop in~$M$ encircling~$e$. The  resulting oriented loop
determines  a cyclic order on the set $P_{e, \cl}$  of germs  of faces of $d$ adjacent to~$e$.
To each $b\in P_{e, \cl}$ we assign the $\cl$-color of the face of~$d$ containing $b$ and a sign equal to~$+$ if the orientation  of~$b$  induces the one of $e\subset \partial b$ (that is, the orientation of~$b$ is given by the orientation of~$e$ followed by a vector at a point of~$e$ directed inside~$b$)    and equal  to $ -$ otherwise.
In this way, $P_{e, \cl}$ becomes a cyclic $\cc$\ti set.
If $e\subset d$, then~$e$ is adjacent to two faces    $r_+, r_-$ of~$d$
such that the orientation of~$r_+$ (induced by the one of~$P$) induces  the given orientation of~$e$ and the orientation of  $r_-$ induces the  opposite orientation of~$e$:
$$
 \psfrag{M}[Bl][Bl]{\scalebox{.9}{$X$}}
 \psfrag{r}[Bc][Bc]{\scalebox{.9}{$  r_+  $}}
 \psfrag{v}[Bc][Bc]{\scalebox{.9}{$ r_-  $}}
 \psfrag{l}[Bc][Bc]{\scalebox{.9}{\textcolor{red}{$e$}}}
 \rsdraw{.45}{.9}{ell-regions3n}\, .
$$
In this picture the   arrow on~$e$   indicates the   orientation  of~$e$.  Set $\varepsilon=+$ if the   orientation of the strand  of $d$ containing $e$ is compatible with that of~$e$ and  set $\varepsilon=-$ otherwise. Then $P_{e, \cl}=\{r_-  , e , r_+ \}$ where $r_-  < e < r_+ <  r_- $ and   the map   $\{r_-  , e , r_+ \} \to   \Ob(\cc) \times \{+,-\}$ carries
$r_\pm  $ to $(\cl(r_\pm ), \pm)$ and carries~$e$ to  $(X,\varepsilon)$ where $X\in \cc$ is the image of the color of~$e$ under the forgetful functor  $ \zz_G(\cc) \to \cc$.

For any oriented rim~$e$ of~$d$,  let
$
H_\cl (e)=H(P_{e, \cl})
$
be the  multiplicity   module of~$  {P_{e, \cl}}$.
In particular, if $e\subset d$, then in the notation above
$$
H_\cl (e)\simeq \Hom_\cc(\un,c(r_- )^*\otimes {{X}}^\varepsilon \otimes c(r_+ ) ).
$$
We let $$H_\cl  =\bigotimes_e \, H_\cl(e)$$
be the unordered tensor product  of the  \kt modules $H_\cl(e  )$   over all    oriented rims~$e$
of~$d$.
An     unoriented  rim~$E$  of~$ {d}$   gives rise to two  opposite  oriented rims $e_1,e_2  $ whose associated cyclic $\cc$-sets $P_{e_1, \cl}$ and $P_{e_2, \cl}$ are dual to each other. This determines a contraction vector $\ast_E\in H_\cl(e_1) \otimes H_\cl(e_2)$, see \cite[Section~12.3.4]{TVi5}.   Set
$$
\ast_\cl=\bigotimes_E  \, \ast_E \in H_\cl
$$
where  $E$ runs over all unoriented rims of~$d$.

We associate with    each   node $x$ of ${ d}$ a   knotted $\cc$-net $\Gamma_x^\cl$ in
an oriented surface homeomorphic to $S^2$.   We do it as in \cite[Section~15.5.1]{TVi5}
by appropriately incorporating colors to crossings of the involved knotted $\cc$-nets.
More precisely, we consider four cases, as follows.
\begin{enumerate}
\labeli
\item Let~$x$ be a vertex of $P$.  Pick a small closed ball
neighborhood $B_x \subset M$ of~$x$ such that $\Gamma_x=P\cap \partial B_x$
is a   non-empty graph and $ P\cap B_x $ is the cone over~$  \Gamma_x $ with summit~$x$. The vertices of $\Gamma_x$ are the intersection   points of the 2-sphere $\partial B_x$ with the edges   of~$P$ incident to~$x$. The edges   of $\Gamma_x$ are the intersections of  $\partial B_x$ with the faces of~$d$ adjacent to~$x$. We endow $\partial B_x$ with the orientation   induced by that of~$M$ restricted to $M\setminus \Int(B_x)$. Every edge~$a$ of $\Gamma_x $ lies   in a face~$r_a$ of~$d$. We color~$a$ with   $\cl(r_a) \in I$ and endow~$a$ with the orientation induced by that of $r_a \setminus \Int (B_x)$.   In this way, $\Gamma_x$   yields a $\cc$-colored graph  in  $\partial B_x$  denoted $\Gamma_x^\cl$. For example:
\begin{center}
\psfrag{i}[Bc][Bc]{\scalebox{.9}{$i$}}
\psfrag{j}[Bc][Bc]{\scalebox{.9}{$j$}}
\psfrag{k}[Bc][Bc]{\scalebox{.9}{$k$}}
\psfrag{l}[Bc][Bc]{\scalebox{.9}{$l$}}
\psfrag{m}[Bc][Bc]{\scalebox{.9}{$m$}}
\psfrag{n}[Bc][Bc]{\scalebox{.9}{$n$}}
\psfrag{q}[Bc][Bc]{\scalebox{.9}{$q$}}
\psfrag{p}[Bc][Bc]{\scalebox{.9}{$p$}}
\psfrag{s}[Bc][Bc]{\scalebox{.9}{$s$}}
\psfrag{t}[Bc][Bc]{\scalebox{.9}{$t$}}
\psfrag{x}[Bc][Bc]{\scalebox{1}{$x$}}
\psfrag{1}[Bc][Bc]{\scalebox{.7}{$1$}}
\psfrag{2}[Bc][Bc]{\scalebox{.7}{$2$}}
\psfrag{3}[Bc][Bc]{\scalebox{.7}{$3$}}
\psfrag{M}[Br][Br]{\scalebox{.9}{$M$}}
\psfrag{S}[Bl][Bl]{\scalebox{.9}{$S^2$}}
\psfrag{G}[Br][Br]{\scalebox{1.111111}{$\Gamma_x^\cl=$}}
\psfrag{H}[Bl][Bl]{\scalebox{1.111111}{$ \subset \partial B_x \simeq S^2$.}}
\rsdraw{.45}{.9}{state-sum-link-ex-new3}
\end{center}

\item Let~$x$ be a switch of~$d$. A neighborhood of  $x$   in $P$    looks as follows:
\begin{equation*}
 \psfrag{i}[Bc][Bc]{\scalebox{.9}{$j$}}
 \psfrag{j}[Bc][Bc]{\scalebox{.9}{$i$}}
 \psfrag{k}[Bc][Bc]{\scalebox{.9}{$k$}}
 \psfrag{1}[Bc][Bc]{\scalebox{.7}{$1$}}
\psfrag{2}[Bc][Bc]{\scalebox{.7}{$2$}}
\psfrag{3}[Bc][Bc]{\scalebox{.7}{$3$}}
 \psfrag{M}[Br][Br]{\scalebox{.9}{$M$}}
 \psfrag{N}[Bc][Bc]{\scalebox{.9}{\textcolor{red}{$A$}}}
 \psfrag{V}[Bc][Bc]{\scalebox{.9}{\textcolor{red}{$B$}}}
 \psfrag{l}[Bc][Bc]{\scalebox{.9}{$l$}}
 \psfrag{x}[Bc][Bc]{\scalebox{.9}{$x$}}
 \psfrag{m}[Bc][Bc]{\scalebox{.9}{$m_a$}}
 \psfrag{u}[Bc][Bc]{\scalebox{.9}{$m_1$}}
 \psfrag{c}[Bc][Bc]{\scalebox{.9}{$n_b$}}
 \psfrag{e}[Bc][Bc]{\scalebox{.9}{$n_1$}}
 \rsdraw{.476}{.9}{demi-vert1skelbis-nG}  .
\end{equation*}
Here the orientation of~$M$ is  right-handed (as always in our pictures),   the object  $A=U_e\in \zz_G(\cc)$ is the color of the rim $e$ of $d$ directed to $x$, the object $B=U_o\in \zz_G(\cc)$ is the color of the rim~$o$ of~$d$ directed away from~$x$, and
$i,j,k,l,m_1, \dots , m_a, n_1, \dots , n_b \in I$ are the $\cl$-colors of the faces of
$d$ adjacent to $x$ with $a \geq 0$ and $b \geq 0$. By definition,   $A=\varphi_{\alpha_{e}^{-1}}(X)$ and   $B=\varphi_{\alpha_{o}^{-1}}(X)$, where $X=u_{\gamma_\lambda}\in \zz_G(\cc)$ is the evaluation of the precoloring~$u$ of~$\Omega$ on  the  track $\gamma_\lambda$ of the arc~$\lambda$   of~$\Omega$ supporting the rims~$e$ and~$o$.
Set
\begin{equation*}
 \psfrag{i}[Br][Br]{\scalebox{.9}{$i$}}
 \psfrag{j}[Br][Br]{\scalebox{.9}{$j$}}
 \psfrag{k}[Bl][Bl]{\scalebox{.9}{$k$}}
 \psfrag{l}[Bl][Bl]{\scalebox{.9}{$l$}}
 \psfrag{M}[Bc][Bc]{\scalebox{.9}{\textcolor{red}{$A$}}}
 \psfrag{B}[Bc][Bc]{\scalebox{.9}{\textcolor{red}{$B$}}}
 \psfrag{p}[Br][Br]{\scalebox{.9}{\textcolor{red}{$\psi_1$}}}
 \psfrag{q}[Bl][Bl]{\scalebox{.9}{\textcolor{red}{$\psi_b$}}}
 \psfrag{m}[Bl][Bl]{\scalebox{.9}{$m_a$}}
 \psfrag{n}[Bl][Bl]{\scalebox{.9}{$m_1$}}
 \psfrag{a}[Br][Br]{\scalebox{.9}{$n_1$}}
 \psfrag{c}[Bl][Bl]{\scalebox{.9}{$n_b$}}
 \Gamma_x^\cl=\;\rsdraw{.47}{.9}{demi-vert3skel-invbis-nG}  \; .
\end{equation*}
Here  we draw   $\Gamma_x^\cl$ in the plane $\RR^2$ (oriented counterclockwise) and view $\Gamma_x^\cl$ as a knotted $\cc$-net in $S^2=\RR^2 \cup \{\infty\}$.
We direct the arc  of $\Gamma_x^\cl$ colored by $m_p$ with $1 \leq p \leq a$    upward if the orientation of  the  $m_p$-labeled face of~$d$  followed by  that of~$d$ at~$x$ yields the positive orientation of $M$, and  downward otherwise.
For $1 \leq q \leq b$, let $r_q$ be the $n_q$-labeled face  of~$d$. Set $\varepsilon_q=1$ if the orientation of $r_q$  followed by  that of~$d$ at~$x$ yields the positive orientation of~$M$ and set $\varepsilon_q=-1$ otherwise. We direct the  arc of $\Gamma_x^\cl$ colored by $n_q$   downward if $\varepsilon_q=1$ and upward otherwise.
We numerate the  $b+1$ arcs lying on the  horizontal  segment from left to right by  $q=0,1,...,b$ and  color the $q$-th arc  with
$$
A_q= \varphi_{\beta_q}(X)\in \zz_G(\cc)  \quad\text{where}\quad
\beta_q=\alpha_e^{-1} \ell(r_1)^{\varepsilon_1}\cdots \ell (r_q)^{\varepsilon_q}  \in G.
$$
Note that  $A_0=A$ (since $\beta_0=\alpha_e^{-1}$) and $A_b=B$ (since $\beta_b=\alpha_o^{-1}$).
The knotted net $\Gamma_x^\cl$ has~$b$ crossing points. We  numerate  them  from left to right by  $q=1,\dots,b$ and
color the $q$-th crossing  with  the isomorphism
$$
\psi_q=
\begin{cases}
  \Bigl(\varphi_2\bigl(\ell(r_q),\beta_{q-1}\bigr)_{X}\Bigr)^{-1} \co A_q \to \varphi_{\ell(r_q)}(A_{q-1})
  & \text{if $\varepsilon_q=1$,} \\
      \Bigl(\varphi_2\bigl(\ell (r_q),\beta_{q}\bigr)_{X}\Bigr)^{-1} \co A_{q-1} \to \varphi_{\ell(r_q)}(A_q)
  & \text{if $\varepsilon_q=-1$.}
\end{cases}
$$

\item Let  $x$ be a coupon of $d$  with $m\geq 0$ inputs and $n\geq 0$ outputs. A
neighborhood of~$x$ in~$ P$ looks as follows:
\begin{equation*}
 \psfrag{A}[Bl][Bl]{\scalebox{.9}{\textcolor{red}{$A_1$}}}
 \psfrag{B}[Bl][Bl]{\scalebox{.9}{\textcolor{red}{$A_2$}}}
 \psfrag{C}[Bl][Bl]{\scalebox{.9}{\textcolor{red}{$A_{m-1}$}}}
 \psfrag{D}[Bl][Bl]{\scalebox{.9}{\textcolor{red}{$A_m$}}}
 \psfrag{X}[Bl][Bl]{\scalebox{.9}{\textcolor{red}{$B_1$}}}
 \psfrag{Y}[Bl][Bl]{\scalebox{.9}{\textcolor{red}{$B_2$}}}
 \psfrag{Z}[Bl][Bl]{\scalebox{.9}{\textcolor{red}{$B_{n-1}$}}}
 \psfrag{E}[Bl][Bl]{\scalebox{.9}{\textcolor{red}{$B_n$}}}
 \psfrag{a}[Bc][Bc]{\scalebox{.9}{$k_1$}}
 \psfrag{x}[Bc][Bc]{\scalebox{.9}{$k_{m-1}$}}
 \psfrag{c}[Bc][Bc]{\scalebox{.9}{$l_1$}}
 \psfrag{e}[Bc][Bc]{\scalebox{.9}{$l_{n-1}$}}
 \psfrag{i}[Bc][Bc]{\scalebox{.9}{$i$}}
 \psfrag{j}[Bc][Bc]{\scalebox{.9}{$j$}}
 \psfrag{f}[Bc][Bc]{\scalebox{1.11111111}{\textcolor{red}{$f$}}}
 \rsdraw{.45}{.9}{voisinage-coupon-n} \, .
\end{equation*}
Here $i,j,k_1, \dots, k_{m-1},l_1, \dots, l_{n-1} \in I$ are the $\cl$-colors of the faces of~$d$ adjacent to the coupon,   the objects     $A_1, \dots, A_m\in \zz_G(\cc)$  are the  colors of the rims corresponding to the inputs,    the objects   $B_1,\dots,B_n \in\zz_G(\cc)$  are the  colors of the rims corresponding to the outputs, and the morphism~$f$ in~$\zz_G(\cc)$ is the color of the coupon~$x$.
For $m,n \geq 1$, set
\begin{equation*}
 \psfrag{A}[Bl][Bl]{\scalebox{.9}{\textcolor{red}{$A_1$}}}
 \psfrag{B}[Bl][Bl]{\scalebox{.9}{\textcolor{red}{$A_2$}}}
 \psfrag{C}[Bl][Bl]{\scalebox{.9}{\textcolor{red}{$A_{m-1}$}}}
 \psfrag{D}[Bl][Bl]{\scalebox{.9}{\textcolor{red}{$A_m$}}}
 \psfrag{X}[Bl][Bl]{\scalebox{.9}{\textcolor{red}{$B_1$}}}
 \psfrag{Y}[Bl][Bl]{\scalebox{.9}{\textcolor{red}{$B_2$}}}
 \psfrag{Z}[Bl][Bl]{\scalebox{.9}{\textcolor{red}{$B_{n-1}$}}}
 \psfrag{E}[Bl][Bl]{\scalebox{.9}{\textcolor{red}{$B_n$}}}
 \psfrag{a}[Bc][Bc]{\scalebox{.9}{$k_1$}}
 \psfrag{x}[Bc][Bc]{\scalebox{.9}{$k_{m-1}$}}
 \psfrag{c}[Bc][Bc]{\scalebox{.9}{$l_1$}}
 \psfrag{e}[Bc][Bc]{\scalebox{.9}{$l_{n-1}$}}
 \psfrag{i}[Bc][Bc]{\scalebox{.9}{$i$}}
 \psfrag{j}[Bc][Bc]{\scalebox{.9}{$j$}}
 \psfrag{f}[Bc][Bc]{\scalebox{1.11111111}{\textcolor{red}{$f$}}}
 \psfrag{R}[Br][Br]{\scalebox{1.11111111}{$$}}
 \Gamma_x^\cl=\; \rsdraw{.47}{.9}{graph-coupon-bis-n}   \, \subset S^2,
\end{equation*}
where the orientations (not shown in the picture) of the vertical arcs  are
induced by the orientations of the corresponding strands of~$d$.   For $m=0$ and $ n\geq 1$, set
\begin{equation*}
 \psfrag{A}[Bl][Bl]{\scalebox{.9}{\textcolor{red}{$A_1$}}}
 \psfrag{B}[Bl][Bl]{\scalebox{.9}{\textcolor{red}{$A_2$}}}
 \psfrag{C}[Bl][Bl]{\scalebox{.9}{\textcolor{red}{$A_{m-1}$}}}
 \psfrag{D}[Bl][Bl]{\scalebox{.9}{\textcolor{red}{$A_m$}}}
 \psfrag{X}[Bl][Bl]{\scalebox{.9}{\textcolor{red}{$B_1$}}}
 \psfrag{Y}[Bl][Bl]{\scalebox{.9}{\textcolor{red}{$B_2$}}}
 \psfrag{Z}[Bl][Bl]{\scalebox{.9}{\textcolor{red}{$B_{n-1}$}}}
 \psfrag{E}[Bl][Bl]{\scalebox{.9}{\textcolor{red}{$B_n$}}}
 \psfrag{a}[Bc][Bc]{\scalebox{.9}{$k_1$}}
 \psfrag{x}[Bc][Bc]{\scalebox{.9}{$k_{m-1}$}}
 \psfrag{c}[Bc][Bc]{\scalebox{.9}{$l_1$}}
 \psfrag{e}[Bc][Bc]{\scalebox{.9}{$l_{n-1}$}}
 \psfrag{i}[Bc][Bc]{\scalebox{.9}{$i$}}
 \psfrag{j}[Bc][Bc]{\scalebox{.9}{$j$}}
 \psfrag{f}[Bc][Bc]{\scalebox{1.11111111}{\textcolor{red}{$f$}}}
 \psfrag{R}[Br][Br]{\scalebox{1.11111111}{$$}}
 \Gamma_x^\cl=\; \rsdraw{.47}{.9}{graph-coupon-bis-n2}   \, \subset S^2,
\end{equation*}
The case  $ m\geq 1$ and $n=0$ is similar. For $m=n=0$, set
\begin{equation*}
 \psfrag{i}[Bl][Bl]{\scalebox{.9}{$i$}}
 \psfrag{k}[Bc][Bc]{\scalebox{1.11111111}{\textcolor{red}{$f$}}}
 \Gamma_x^\cl=\; \rsdraw{.47}{.9}{graph-coupon-800n}   \, \subset S^2,
\end{equation*}
where here $f \in \End_{\zz_G(\cc)}(\un_{\zz_G(\cc)})  =\kk$.

\item Let $x$ be  a crossing of $d$. A neighborhood of~$x$ in~$P$ looks as follows:
\begin{equation*}
 \psfrag{i}[Bc][Bc]{\scalebox{.9}{$i$}}
 \psfrag{j}[Bc][Bc]{\scalebox{.9}{$j$}}
 \psfrag{k}[Bc][Bc]{\scalebox{.9}{$k$}}
 \psfrag{l}[Bc][Bc]{\scalebox{.9}{$l$}}
 \psfrag{p}[Bl][Bl]{\scalebox{.9}{\textcolor{red}{$\psi$}}}
 \psfrag{u}[Bl][Bc]{\scalebox{.9}{\textcolor{red}{$C$\,}}}
 \psfrag{x}[Bl][Bl]{\scalebox{.9}{\textcolor{red}{$B$}}}
 \psfrag{c}[Bl][Bl]{\scalebox{.9}{\textcolor{red}{$A$}}}
 \psfrag{r}[Bc][Bc]{\scalebox{.9}{\textcolor{red}{$D$}}}
 \rsdraw{.45}{.9}{new-cros-alg1}
\end{equation*}
where objects $A,B,C,D \in \zz_G(\cc)$ are the colors of the rims adjacent to $x$, the isomorphism
$\psi$ is the color of $x$, and $i,j,k,l \in I$ are the $\cl$-colors of the faces of~$d$ adjacent to $x$. We associate with~$x$ the  knotted $\cc$-net
\begin{equation*}
 \psfrag{i}[Bc][Bc]{\scalebox{.9}{$i$}}
 \psfrag{j}[Bc][Bc]{\scalebox{.9}{$j$}}
 \psfrag{k}[Bc][Bc]{\scalebox{.9}{$k$}}
 \psfrag{l}[Bc][Bc]{\scalebox{.9}{$l$}}
 \psfrag{p}[Bl][Bl]{\scalebox{.9}{\textcolor{red}{$\psi$}}}
 \psfrag{C}[Br][Br]{\scalebox{.9}{\textcolor{red}{$C$\,}}}
 \psfrag{B}[Bl][Bl]{\scalebox{.9}{\textcolor{red}{$B$}}}
 \psfrag{A}[Br][Br]{\scalebox{.9}{\textcolor{red}{$A$\,}}}
 \psfrag{D}[Bl][Bl]{\scalebox{.9}{\textcolor{red}{$D$}}}
   \Gamma_x^\cl= \;\rsdraw{.45}{.9}{new-cros-alg2}  \, \subset S^2,
\end{equation*}
where the orientations of the diagonals (not shown in the picture)  are
induced by the orientations of the corresponding strands of~$d$.
\end{enumerate}

By Section~\ref{sect-nets-in-sphere}, for any node $x$ of~$d$,  the   knotted $\cc$-net $\Gamma_x^\cl$
yields a   vector
$$
  \inv_\cc (\Gamma_x^\cl) \in \Hom_\kk(H(\Gamma_x^\cl),  \End_\cc(\un))=\Hom_\kk(H(\Gamma_x^\cl),\kk) ={{H(\Gamma_x^\cl)}}^\star.
$$
It results from the definitions   that   we have canonical isomorphisms
$$
H(\Gamma_x^\cl)\simeq \bigotimes_{e_x}\, H_\cl(e_x) \qquad {\rm {and}} \quad H(\Gamma_x^\cl)^\star  \simeq  \bigotimes_{e_x} \, H_\cl(e_x)^\star
$$
where $e_x$ runs over the rims of ${  d}$ incident to $x$ and oriented away from $x$.
The    tensor product of the latter isomorphisms   over all nodes~$x$ of~$d$   yields an isomorphism
$$
\bigotimes_x  H(\Gamma_x^\cl)^\star \simeq \bigotimes_x \bigotimes_{e_x}   H_\cl(e_x)^\star \simeq H_\cl^\star.
$$
The image  under this isomorphism  of the unordered tensor product $\bigotimes_x \inv_\cc (\Gamma_x)$
is a vector $V_\cl \in H_\cl^\star$. Set
$$
\vert \cl \vert = V_\cl(\ast_\cl) \in  \kk \quad \text{and} \quad \dim(\cl)=   \prod_{r \in \Fac(d)} (\dim \cl(r))^{\chi(r)}      \in \kk
$$
where $\chi $ is the Euler characteristic. Finally, set
\begin{equation}\label{eq-state-sum-ribbon-graphs}
|M,  \Omega|_\cc=(\dim (\cc_1))^{-\vert M\setminus P\vert}      \sum_{\cl}  \dim(\cl) \vert \cl \vert
 \in \kk ,
\end{equation}
where  $\cl$ runs over all $G$-colorings of $d$ and the positive integer  $\vert M\setminus P\vert$ is the number of $P$-balls (i.e., the number of connected components of $M\setminus P$).

\begin{thm}\label{thm-state-sum-ribbon-graphs}
The scalar $|M, \Omega|_\cc  $ is a well-defined  isomorphism invariant of the $\zz_G(\cc)$-colored $G$-graph $(M,\Omega)$.  Also, $|M, \Omega|_\cc  $ is invariant under stabilization  and  conjugation of $(M,\Omega)$ (see Section~\ref{sect-stabilization}).
\end{thm}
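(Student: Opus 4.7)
The plan is to establish independence of $|M,\Omega|_\cc$ from the various choices entering its construction, namely: (i) the $1$-system and $2$-system of tracks for $\Omega$; (ii) the representative $g$ of the given homotopy class of maps; (iii) the centers of the $P$-balls; (iv) the knotted plexus $d \subset P$ representing the underlying ribbon graph of $\Omega$; and (v) the skeleton $P$ of $M$. Once well-definedness is settled, isomorphism invariance is essentially formal because an isomorphism of $\zz_G(\cc)$-colored $G$-graphs transports a skeleton-with-plexus to a skeleton-with-plexus giving identical state-sum data, while invariance under stabilization and conjugation will follow by a direct inspection once invariance under $(P,d)$ is in hand.

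First, the auxiliary choices (i)--(iii) can be disposed of by soft arguments. Changing the $1$-system or $2$-system of tracks replaces the $\zz_G(\cc)$-coloring of $(M,\Omega)$ by a strongly isomorphic one, by Lemma~\ref{lem-precolorings-G-graphs-in}, Lemma~\ref{lem-precolorings-G-graphs}, and the analogous construction for coupon-colorings in Section~\ref{sect-contruct-colorings}; this in turn replaces each induced rim/crossing/coupon-coloring on $d$ by a coherently isomorphic one, so that each knotted $\cc$-net $\Gamma_x^\cl$ is replaced by an isomorphic one and $\inv_\cc(\Gamma_x^\cl)$ is unchanged. Independence from the representative $g$ and from the centers of the $P$-balls follows from asphericity of $\X = K(G,1)$: the data actually used (the $G$-labeling $\ell$ and the group elements $\alpha_e,\alpha_c,\beta_q \in G$ entering the colors of rims, coupons, and crossings) are extracted from homotopy classes of paths rel.\ endpoints in $M\setminus\Omega$ composed with $g$, and these are preserved under any homotopy of $g$ or isotopy of centers.

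The main obstacle is the independence from the pair $(P,d)$. By the presentation theorem cited in Section~\ref{sect-knotted-plexuses-in-skel}, any two such pairs representing isotopic ribbon graphs are related by a finite sequence of local moves: primary moves on the skeleton (bubble, lune, matter passing through an edge), the plexus-analogues of the Reidemeister moves at crossings, and moves involving switches and coupons. Invariance under each move reduces to a local identity among invariants of knotted $\cc$-nets in $S^2$, weighted by powers of $\dim(\cc_1)^{-1}$ and summed over internal $G$-colorings. The skeleton moves not involving $d$ are handled exactly as in the state-sum HQFT of \cite{TVi2}, using the completeness identity
\begin{equation*}
\sum_{i\in I_\alpha} \dim(i)\, \lev_i(\id_{i^*}\otimes f)\rcoev_i \;=\; \tr(f), \qquad f \co X \to X,\ X \in \cc_\alpha ,
\end{equation*}
together with the semisimple decomposition of identities in a $G$-fusion category. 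The moves involving crossings, switches, and coupons of $d$ require in addition the hexagon/pentagon axioms of the crossing $\varphi$, the naturality and hexagon axioms for the $G$-braiding $\tau$ of $\zz_G(\cc)$, and the self-duality of the twist; these are precisely the ingredients used to prove Reidemeister invariance of the Reshetikhin--Turaev-type functor in \cite{TVi4}, and they upgrade the $G=1$ verification of \cite[Chapter~15]{TVi5} to the $G$-graded setting. This bookkeeping of $G$-elements and the coherence isomorphisms $\varphi_2,\varphi_0,\varphi^1_\alpha$ in the local identities is expected to be the hard part.

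Finally, for stabilization and conjugation: if $\Omega'$ is the stabilization of $\Omega$ at an arc, the plexus $d' \subset P$ obtained by inserting a small identity-colored coupon in the corresponding rim of $d$ represents $\Omega'$; the new node contributes a one-input one-output knotted $\cc$-net $\Gamma_x^\cl$ whose invariant collapses to the identity contraction, so $|M,\Omega'|_\cc = |M,\Omega|_\cc$. For conjugation of a closed component $C$ by $\kappa \in G$, one keeps $(P,d)$ fixed and post-composes $g|_C$ with a self-map of $\X$ realizing conjugation by $\kappa$ on $\pi_1(\X,\x)$; this left-multiplies every $\alpha_e,\alpha_c,\beta_q$ by $\kappa$, and by the hexagon axiom for $\varphi$ and the naturality of $\tau$ this reproduces exactly the colored rim/coupon/crossing data induced by the conjugated coloring defined in Section~\ref{sect-stabilization}, leaving each $\inv_\cc(\Gamma_x^\cl)$ invariant and hence preserving the full state sum.
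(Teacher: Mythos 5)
Your overall strategy matches the paper's: the theorem is reduced to invariance of the state sum \eqref{eq-state-sum-ribbon-graphs} under the local moves on knotted plexuses in skeletons, to be obtained by combining the verifications of \cite[Theorem~7.1]{TVi2} (skeleton moves) with those of \cite[Theorem~15.7]{TVi5} (moves on the plexus), and both you and the paper defer the actual move-by-move computation. Your explicit discussion of the auxiliary choices --- the $1$- and $2$-systems of tracks via strong isomorphism of colorings, the representative $g$ and the $P$-ball centers via asphericity of $\X$ --- is a useful amplification and is correct, and so is your stabilization argument via an identity-colored coupon.

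The conjugation argument, however, contains a genuine gap. Post-composing $g|_C$ with a self-map of $\X$ realizing $\alpha\mapsto\kappa^{-1}\alpha\kappa$ does \emph{not} left-multiply the elements $\alpha_e,\alpha_c,\beta_q$ by $\kappa$: since $g\circ\tilde\alpha_e$ is a loop at $\x$, the elements are conjugated, $\alpha_e\mapsto\kappa^{-1}\alpha_e\kappa$. Moreover, the conjugated data is not term-by-term equal to the original: one finds $\ell^\kappa(r)=\kappa^{-1}\ell(r)\kappa$, so the set of admissible $G$-colorings $\cl$ (constrained by $\cl(r)\in I_{\ell(r)}$) changes, and $U_e^\kappa\cong\varphi_\kappa(U_e)$, so the subcolors of the rims change; hence neither the nets $\Gamma_x^\cl$ nor the scalars $\inv_\cc(\Gamma_x^\cl)$ are preserved as you claim. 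What is actually true is the following. The state-sum data of the conjugated $\zz_G(\cc)$-colored $G$-graph, computed using the representative $\phi_\kappa\circ g$ and the conjugated coloring $u^\kappa$, coincides exactly with the data obtained from the \emph{original} colored $G$-graph after a global gauge transformation, i.e.\ after replacing the representative $g$ by a homotopic one that drags every $P$-ball center around a loop representing $\kappa$ (equivalently, after isotoping the centers). Indeed both changes produce $\ell\mapsto\kappa^{-1}\ell\kappa$ and $U_e\mapsto\varphi_\kappa(U_e)$ up to coherent isomorphism. Since gauge invariance is already part of the independence-of-choices argument you give in items (ii)--(iii), conjugation invariance does follow --- but by reduction to well-definedness, not by a term-by-term identity, and your proof as written does not establish it.
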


The proof of Theorem~\ref{thm-state-sum-ribbon-graphs}  consists in verifying that the left-hand side of \eqref{eq-state-sum-ribbon-graphs} remains invariant under the moves on knotted plexuses in skeletons (see the end of Section~\ref{sect-knotted-plexuses-in-skel}). This goes by combining the proofs of \cite[Theorem 7.1]{TVi2} and \cite[Theorem 15.7]{TVi5} and is left to the reader. The   naturality of $\inv_{\cc}$ implies that $|M, \Omega|_\cc  $ is independent of  the choice  of the representative set~$I$   of simple objects of~$\cc$.   For $\Omega=\emptyset$, we obtain the invariant    $|M, \emptyset|_\cc = |M|_\cc  $      of  the $G$-manifold $M$ defined    in  \cite[Theorem 7.1]{TVi2}.

\subsection{Proof of Theorem~\ref{thm-extension-state-sum-graph-HQFT}}\label{sect-proof}
The construction of the state sum graph TQFTs given in \cite[Section 15.7]{TVi5} applies (with colorings replaced by $G$-colorings) to the state sum invariant $\vert \cdot \vert_\cc$ of $\zz_G(\cc)$-colored $G$-graphs (defined in Section~\ref{sect-the-inv-for-MR}) and produces a graph HQFT $\vert \cdot \vert_\cc$ over $\zz_G(\cc)$.
It is clear from the definitions that this graph HQFT extends the state sum HQFT $\vert \cdot \vert_\cc$. This proves Theorem~\ref{thm-extension-state-sum-graph-HQFT}.

\subsection{Properties of $\vert \cdot \vert_\cc$}\label{sect-graph-stsum-hqft-construct}
We state two   properties of the  graph HQFT $\vert \cdot \vert_\cc$. First,  we compute
$|S^3, \Omega|_\cc \in \kk$ for any   $\zz_G(\cc)$-colored $G$-graph $\Omega$ in $S^3$.      By \cite{TVi4},   the $G$-ribbon category $\zz_G(\cc)$ defines a monoidal functor $F_{\zz_G(\cc)}$ from the category of $\zz_G(\cc)$-colored $G$-graphs in $\R^2 \times [0,1]$ to~$\zz_G(\cc)$. In particular,
$$
F_{\zz_G(\cc)}( \Omega) \in \End_{\zz_G(\cc)}(\un_{\zz_G(\cc)})=\kk.
$$
In generalization of \cite[Theorem 16.1]{TVi5} (where   $G=1$), we claim that
\begin{equation}\label{eq-compare-1}
|S^3, \Omega|_\cc  = (\dim (\cc_1))^{-1}  F_{\zz_G(\cc)}( \Omega).
\end{equation}
The proof   repeats  the one of  \cite[Theorem 16.1]{TVi5} with  colorings   replaced by $G$-colorings.

Second, we compute the isomorphism class of the module $\vert \Sigma \vert_\cc$
for any connected $\zz_G(\cc)$-colored $G$-surface $\Sigma$ of genus $g\geq 0$.
Let  $\alpha_1,\beta_1, \dots, $ $ \alpha_g,\beta_g \in G$ and   $U_\Sigma\in \zz_G(\cc)$
 be   as in Section~\ref{sect-surg-HQFT}.
 We claim that if the category~$\cc$ is additive, then
\begin{equation}\label{eq-compare-2}
\vert \Sigma \vert_\cc \simeq \Hom_{\zz_G(\cc)}\Bigl(\un_{\zz_G(\cc)},\widetilde{C}_{\alpha_1,\beta_1} \otimes \cdots \otimes \widetilde{C}_{\alpha_g,\beta_g} \otimes   U_\Sigma  \Bigr),
\end{equation}
where for $\alpha,\beta \in G$, the object $\widetilde{C}_{\alpha,\beta}=(C_{\alpha,\beta},\sigma^{\alpha,\beta})$ of $\zz_G(\cc)$ is defined by
$$
C_{\alpha,\beta}=\bigoplus_{\substack{i \in I_\alpha,  j \in I_\beta}} i^* \otimes j^* \otimes i \otimes j
$$
and, for any $X \in \cc_1$,
$$
\psfrag{i}[Bc][Bc]{\scalebox{.9}{$i$}}
 \psfrag{j}[Bc][Bc]{\scalebox{.9}{$j$}}
 \psfrag{k}[Bc][Bc]{\scalebox{.9}{$k$}}
 \psfrag{l}[Bc][Bc]{\scalebox{.9}{$l$}}
 \psfrag{n}[Bc][Bc]{\scalebox{.9}{$z$}}
 \psfrag{X}[Bc][Bc]{\scalebox{.9}{$X$}}
 \psfrag{a}[cc][cc]{\scalebox{.9}{$p^\beta_{z^* \otimes j \otimes z}$}}
 \psfrag{s}[cc][cc]{\scalebox{.9}{$q^\beta_{z^* \otimes j \otimes z}$}}
 \psfrag{c}[cc][cc]{\scalebox{.9}{$q_{z \otimes k \otimes z^*}^\alpha$}}
 \psfrag{r}[cc][cc]{\scalebox{.9}{$p_{z \otimes k \otimes z^*}^\alpha$}}
  \psfrag{v}[cc][cc]{\scalebox{.9}{$p_X^\gamma$}}
   \psfrag{u}[cc][cc]{\scalebox{.9}{$q_X^\gamma$}}
\sigma^{\alpha,\beta}_X= \!\!\!\sum_{\substack{i,k\in I_\alpha \\ j,l \in I_\beta\\ z \in I_1}} \; \rsdraw{.50}{.9}{sigma-ab} \co C_{\alpha,\beta} \otimes X \to X \otimes C_{\alpha,\beta}.
$$
Here,   complementary curvilinear boxes of the same color represent the projection/inclusion determined by an  isotypic subobject, see \cite[Section 4.6]{TVi5} for   details.
Formula \eqref{eq-compare-2} generalizes \cite[Theorem 16.2]{TVi5} where $G=1$. The proof repeats  the one given there  with  colorings   replaced by $G$-colorings.

\section{Computations in   graph HQFTs}\label{sect-sugery-computation}
In this section,    $Z\co \Cob^G_\bb \to \Mod_\kk$ is an arbitrary   graph HQFT  over a  $G$-crossed category $\bb$ over $\kk$.  We give a surgery formula  for~$Z$ using    so-called torus vectors.

\subsection{Graphs without free ends}\label{sect-free-ends}
A $\bb$-colored $G$-graph   $(M,\Omega)$ \emph{has no free ends}
 if $\Omega \cap \partial M= \emptyset$, that is, if the endpoints of all  arcs of $\Omega$ lie on the bases of the coupons.
Such a graph   represents  a morphism  $(M,\Omega)_+\co\emptyset \to  \partial M$ and a morphism $(M,\Omega)_-\co  -\partial M \to \emptyset$
in the category $\Cob^G_\bb$. Recall the $\kk$-linear isomorphism  $Z_0\co \kk\simeq Z(\emptyset)$ and set
$$
Z_+(M,\Omega) =Z((M,\Omega)_+)\, Z_0\co \kk \to Z(\partial M )
$$
and
$$
Z_-(M,\Omega) =Z_0^{-1}Z((M,\Omega)_-)\co Z(-\partial M )\to  \kk.
$$
If $\partial M=\emptyset$, then any $\bb$-colored $G$-graph $\Omega\subset M$ has no free ends and
  $(M,\Omega)_+=(M,\Omega)_-$. In this case,  the \kt linear homomorphism
$$
Z_0^{-1} Z_+(M,\Omega) = Z_-(M,\Omega) Z_0 \co \kk \to \kk
$$
is multiplication by  the scalar  $Z(M,\Omega)\in \kk$ defined in Section~\ref{sect-def-graph-HQFT}.

\subsection{Torus vectors}\label{sect-torus-vector-G}
We endow the unit disk $D^2=\{ z \in \CC, \vert z\vert \leq 1\}$  and the
unit circle $S^1=\partial D^2$  with the  counterclockwise orientation. Endow  the torus $S^1\times S^1$    with  the product orientation and   the base point  $*=(1,1)$. For
each $\alpha \in G=\pi_1(\X ,\x)$, we let $g_\alpha $  be the unique  homotopy class  of maps $ (S^1\times S^1,*) \to (\X ,\x)$  which carry the loops
$$
t \in [0,1] \mapsto (e^{2\pi i t},1) \in S^1\times S^1 \quad {\rm{and}} \quad t \in [0,1] \mapsto (1, e^{2\pi i t}) \in S^1\times S^1
$$
into     loops  in $(\X ,\x)$    representing respectively~$\alpha$ and $1\in G$. Then  $\T_\alpha=(S^1\times S^1,g_\alpha)$ is a   $\bb$-colored $G$-surface with an empty set of marked points.
Let  ${V}=-( S^1\times D^2)$ be the solid torus with orientation opposite to the product orientation and with   boundary $\partial {V} =S^1\times S^1$ pointed by $(\partial {V})_\bullet=\{\ast\}$.
The homotopy class of maps $g_\alpha$  above extends   uniquely to a homotopy class  of maps
 $\widetilde{g}_\alpha \co  (V, (\partial {V})_\bullet) \to (\X,\x)$.
 The  triple $V_\alpha=(V, \emptyset, \widetilde{g}_\alpha)$ is a $\bb$-colored  $G$-graph with no free ends and with $\partial (V_\alpha) = \T_\alpha$.
The \emph{$\alpha$-torus vector of~$Z$} is the vector
$Z_+(V_\alpha)(1_\kk) \in Z(\T_\alpha)$.

\subsection{The surgery formula}\label{sec-The-surgery-formula}\label{sect-calc-ZLR}
Let  $(M,\Omega)$ be a $\bb$-colored $G$-graph where $M$ is a closed connected oriented $3$-manifold. We give  a surgery formula for   $Z(M,\Omega)\in \kk$.
To this end, present $M$ as the result of surgery on $S^3=\RR^3\cup\{\infty\}$
along a framed  link $L=L_1\cup \cdots \cup L_n\subset \RR^2\times (0,1)$ with     $n\geq 1 $ components.
Pick  a closed regular neighborhood $U\subset S^3$ of~$L$
and  let $E =S^3\setminus \Int (U)$  be     the    exterior  of~$L$ in $S^3$.   We    endow both~$U$ and~$E$ with orientation induced by the right-handed orientation of~$S^3$. By  the definition of  surgery,~$M$ is obtained by gluing~$n$ solid tori to~$E$.
Deforming if necessary $\Omega$ in~$M$, we  assume that $\Omega\subset \Int(E)\subset M$   and that $\Omega_\bullet \subset \partial E$ when $\Omega\neq \emptyset$.

  We  now turn the pair $(E,\Omega)$ into a $\bb$-colored $G$-graph. Let  $ n (S^1 \times S^1   )$   be a disjoint union of~$n$ copies of  $S^1 \times S^1$.
Orient the link~$L$ arbitrarily  and pick an  orientation-preserving  diffeomorphism
$$
 f_L \co  n (S^1 \times S^1)      \longrightarrow \partial U =-\partial E
$$
which, for all   $p \in S^1$   and        $q=1,\dots,n$, carries the $q$-th copy of $S^1\times   \{ p  \} $ to  a positively oriented meridian of $L_q$ in $\partial U$ and carries the $q$-th copy of $   \{p\}   \times S^1$  to  the positively oriented longitude of $L_q$ in $\partial U$ determined by the
framing of~$L$.
Let $(\partial E)_\bullet \subset \partial E$ be the set consisting of the images under $f_L$ of the base points
$*=(1,1)$ of the~$n$ copies of $S^1\times S^1$.  We choose $ f_L $ so that $\Omega_\bullet \subset (\partial E)_\bullet$ when $\Omega\neq \emptyset$.  So, $E$ becomes an  oriented 3-manifold with pointed boundary  and $\Omega$  is a ribbon graph in~$E$ with no free ends.
Pick in the given homotopy class of maps $(M \setminus \Omega, \Omega_\bullet) \to (\X,\x)$ a map carrying
 $(\partial E)_\bullet \subset M \setminus \Omega$ to $\x$ and restrict it  to
$E \setminus \Omega$. This gives a map
$g\co (E\setminus \Omega, (\partial E)_\bullet ) \to  (\X,\x)$  turning  $(E,\Omega)$ into a $G$-graph.
The $\bb$-coloring of $(M,\Omega)$ induces a $\bb$-coloring of $(E,\Omega)$.
Indeed, when $\Omega\neq \emptyset$, pick a 1-system and a 2-system of tracks for $\Omega$ in $E$ that start in the point of
$\Omega_\bullet \subset (\partial E)_\bullet$. These systems together with their evaluation under the $\bb$-coloring of $(M,\Omega)$ define a $\bb$-coloring of  $(E,\Omega)$ as in Section~\ref{sect-contruct-colorings}.
Thus, $(E,\Omega)$ is a $\bb$-colored $G$-graph with no  free ends.

The homotopy class of the  map $g   f_L \co n (S^1 \times S^1)  \to (\X,\x)$ turns $  n (S^1 \times S^1)   $
into a $G$-surface which, in
  the notation of Section~\ref{sect-torus-vector-G},    is a disjoint union $\sqcup_{q=1}^n \T_{\alpha_q} $  where
 $\alpha_q\in G=\pi_1(\X ,\x)$ is represented by the image under $g f_L$ of the loop
  $S^1\to   n (S^1 \times S^1)   $ carrying  any $s \in S^1$ to the point $(s,1)$ in the $q$-th copy of $S^1\times S^1$.
Then the map $f_L$ above is an isomorphism  of $G$-surfaces  $\sqcup_{q=1}^n \T_{\alpha_q}  \to -\partial E$. Its
 cylinder  (see Section~\ref{sect-cyl-id}) induces   a \kt linear isomorphism
$$
Z(f_L)=Z(\cyl(f_L)) \co Z (\sqcup_{q=1}^n \T_{\alpha_q}) \to Z(-\partial E).
$$
The monoidal constraints of the graph HQFT $Z$ induce a \kt linear isomorphism
$$
  \xi_n    \co  Z(\T_{\alpha_1}) \otimes \cdots \otimes  Z(\T_{\alpha_n})
  \to Z(\sqcup_{q=1}^n \T_{\alpha_q}).
$$ Composing these  isomorphisms  with the  homomorphism $
Z_-(E,\Omega) \co   Z (-\partial E )   \to  \kk
$
(see Section~\ref{sect-free-ends}), we  obtain a \kt linear homomorphism
\begin{equation*}
Z^L=Z_-(E,\Omega) \circ  Z(f_L) \circ  \xi_n  \co Z(\T_{\alpha_1}) \otimes \cdots \otimes  Z(\T_{\alpha_n})   \to
\kk.
\end{equation*}

\begin{lem}\label{lem-TQFTsurg-G}
For any $\alpha\in G$, let $w_\alpha  \in Z(\T_{\alpha})$ be the $\alpha$-torus vector of~$Z$. Then
$$
Z(M, \Omega  )= Z^L \, (w_{\alpha_1} \otimes \cdots  \otimes w_{\alpha_n}).
$$
\end{lem}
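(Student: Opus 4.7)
My plan is to realize the closed $\bb$-colored $G$-cobordism $(M,\Omega)$, viewed as a morphism $\emptyset \to \emptyset$ in $\Cob^G_\bb$, as an explicit triple composition in $\Cob^G_\bb$ and then evaluate $Z$ by functoriality and strong monoidality.

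The core step will be to establish the identity of morphisms in $\Cob^G_\bb$
$$
(M,\Omega) = (E,\Omega)_- \circ \cyl(f_L) \circ \bigl( \sqcup_{q=1}^n V_{\alpha_q} \bigr).
$$
I will verify this directly from the definition of composition in Section~\ref{sect-def-compo-cob-G}. The underlying 3-manifold of the right-hand side is $E \cup_{f_L}(V_1 \sqcup \cdots \sqcup V_n)$, which by the very definition of surgery is orientation-preservingly homeomorphic to $M$. Since $\Omega\subset\Int(E)$ and the solid tori carry empty ribbon graphs, the glued ribbon graph is isotopic to $\Omega$; the small coupons inserted at the gluing base points during composition lie entirely inside the solid tori (away from $\Omega$), are trivially colored by identities, and can be absorbed by the inverse of the stabilization move. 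For the homotopy class of maps to $\X$, I will use that $\widetilde g_{\alpha_q}$ was defined as the unique extension of $g_{\alpha_q}$ over $V_q$ and that $\alpha_q$ was precisely the element represented by the image under $g\circ f_L$ of the $q$-th meridional loop; this forces the pieces' classes to match on the gluing boundaries, and by the elementary obstruction argument of Section~\ref{sect-def-compo-cob-G} the resulting class on the composition coincides with the given class on $M\setminus\Omega$. The $\bb$-coloring on the composition agrees with that of $(M,\Omega)$ because the coloring of $(E,\Omega)$ was defined by restriction of the coloring of $(M,\Omega)$ via systems of tracks based in $\partial E$, and the $V_{\alpha_q}$ carry no coloring data of their own.

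Once this equality of morphisms is established, applying the functor $Z$ and using functoriality yields
$$
Z(M,\Omega)\cdot\id_\kk = Z_-(E,\Omega) \circ Z(\cyl(f_L)) \circ Z_0^{-1} \circ Z\bigl(\sqcup_q V_{\alpha_q}\bigr) \circ Z_0.
$$
By strong monoidality of $Z$, the factor $Z_0^{-1}\circ Z(\sqcup_q V_{\alpha_q})\circ Z_0$ evaluated at $1_\kk$ equals $\xi_n(w_{\alpha_1}\otimes\cdots\otimes w_{\alpha_n})$, since $Z_+(V_{\alpha_q})(1_\kk)=w_{\alpha_q}$ for each $q$ by the definition of the torus vectors in Section~\ref{sect-torus-vector-G}. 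Substituting and invoking the definition of $Z^L$ from Section~\ref{sec-The-surgery-formula} produces the desired formula.

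The main (though mild) obstacle I anticipate is the bookkeeping in the first step: tracking how the coupons inserted at the gluing base points of the two successive compositions are eliminated via inverse stabilization, and verifying that the chosen base points, tracks and homotopy-class representatives on each of the three pieces assemble into an admissible representative of $(M,\Omega)$ up to the equivalences of isomorphism, stabilization, and conjugation specified in Section~\ref{EQFTs versus HQFTs+++}.
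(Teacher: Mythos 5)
Your proposal is correct and follows essentially the same route as the paper's proof: decompose $(M,\Omega)_+$ as $(E,\Omega)_-\circ\cyl(f_L)\circ((V_{\alpha_1})_+\sqcup\cdots\sqcup(V_{\alpha_n})_+)$ in $\Cob^G_\bb$, apply $Z$, and use strong monoidality together with the definition of $Z^L$ and the torus vectors. The only difference is that you elaborate on the bookkeeping behind the gluing identity (insertion of stabilization coupons, matching of homotopy classes, colorings), which the paper treats as immediate.
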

\begin{proof}
Recall from Section~\ref{sect-torus-vector-G} the  $\bb$-colored $G$-graphs $\{V_\alpha\}_{\alpha \in G}$ with $\partial V_\alpha=\T_\alpha$.
Since $ (M, \Omega ) $ is obtained by gluing $V_{\alpha_1} \sqcup \cdots \sqcup V_{\alpha_n}$ to $(E,\Omega)$  along
$f_L$, we have
$$
(M,\Omega)_+=(E,\Omega)_- \circ \cyl(f_L)  \circ ((V_{\alpha_1})_+ \sqcup \cdots \sqcup (V_{\alpha_n})_+)\co \emptyset \to \emptyset
$$
in the category $\Cob^G_\bb$. Applying the functor~$Z$, we get
$$
Z((M,\Omega)_+)=Z((E,\Omega)_-) \circ Z( f_L)  \circ Z((V_{\alpha_1})_+ \sqcup \cdots \sqcup (V_{\alpha_n})_+) \co Z(\emptyset) \to
Z(\emptyset).
$$
Since $Z(M,  \Omega   ) = Z_0^{-1} Z((M,\Omega)_+)Z_0(1_\kk)$, we have
$$
Z(M,  \Omega   ) = Z_0^{-1} \,  Z((E,\Omega)_-) \, Z(f_L)  \, Z((V_{\alpha_1})_+ \sqcup \cdots \sqcup (V_{\alpha_n})_+)\, Z_0(1_\kk).
$$
Now, by definition,  $Z_-(E,\Omega)=Z_0^{-1}  Z((E,\Omega)_-)$. Also,
 the monoidality of~$Z$ and the definition of the torus vectors imply that
$$
Z((V_{\alpha_1})_+ \sqcup \cdots \sqcup (V_{\alpha_n})_+) \, Z_0(1_\kk)
=\xi_n (w_{\alpha_1} \otimes \cdots  \otimes w_{\alpha_n}).
$$
Therefore,
\begin{gather*}
Z(M,  \Omega   ) = Z_-(E,\Omega) \,  Z(f_L)  \,  \xi_n (w_{\alpha_1} \otimes \cdots  \otimes w_{\alpha_n})
= Z^L \, (w_{\alpha_1} \otimes \cdots  \otimes w_{\alpha_n}). \qedhere
\end{gather*}
\end{proof}

We now evaluate  $Z^L$ on  certain  vectors.
Consider the solid torus  $W=D^2 \times S^1$ with  product orientation and  base point   $*=(1,1)$  in  $\partial {W} =S^1\times S^1$. Consider the knot  $K= \{0\}\times S^1\subset \Int(W)$     with orientation    induced by the opposite
(clockwise) orientation of  $S^1$ and  with framing  $((1,0), 0)$  at all points.  Insert  in~$K$  a  coupon  transversal to the framing and having one input and one output (see Section~\ref{sect-stabilization}).  We choose the  bottom base of the coupon so that the input is directed out of the coupon. In this way, we stabilize~$K$
into  a ribbon graph $K^s \subset W$.
 For  $\alpha \in G$, the homotopy class  of maps $g_\alpha \co (S^1\times S^1,*) \to (\X ,\x)$ from Section~\ref{sect-torus-vector-G}
extends uniquely to a homotopy class  of maps
$$
g^+_\alpha \co (W \setminus K^s, (\partial W)_\bullet=\{\ast \}) \to (\X,\x).
$$
The triple  $W_\alpha=(W,K^s,g^+_\alpha)$ is a $G$-graph with no free ends. Note that for any track $\gamma$ of a stratum of $K^s$, the
associated element  $\mu_\gamma \in \pi_1( W \setminus K^s, \ast)$  is carried by~$g^+_\alpha$  to $\alpha \in \pi_1(\X ,\x)=G$. Each  object $X \in \bb_\alpha$  determines a $\bb$-coloring of $W_\alpha$   as follows.
Pick a track of  the only arc and a track of  the only coupon   of~$K^s$. These tracks forms a 1-system and a 2-system of tracks for $W_\alpha$ and so, when colored by the object $X$ and the morphism $\id_{\varphi_1(X)}$, they determine a $\bb$-coloring of~$W_\alpha$ (see Section~\ref{sect-contruct-colorings}). This yields a $\bb$-colored
$G$-graph $W_\alpha^X$ with no free ends  such that $\partial (W^X_\alpha) = \T_\alpha$. Set
\begin{equation}\label{eq-def-yX}
[X]=Z_+(W^X_\alpha) (1_\kk) \in  Z(\T_\alpha)  .
\end{equation}
For  any $X_1 \in \bb_{\alpha_1}, \dots, X_n \in \bb_{\alpha_n}$, gluing $W_{\alpha_1}^{X_1} \sqcup \cdots \sqcup W_{\alpha_n}^{X_n}$ to $(E,\Omega)$  along  $f_L$ yields a $\bb$-colored $G$-graph $(S^3,T_{X_1, \dots, X_n})$.
Note that the underlying ribbon graph of $T_{X_1, \dots, X_n}\subset S^3$ is the union $L^s \cup \Omega$, where $L^s$ is obtained from~$L$  by stabilizing each of its  components.
\begin{lem}\label{lem-formul-HUL}
For  any $X_1 \in \bb_{\alpha_1}, \dots, X_n \in \bb_{\alpha_n}$,
$$
Z^L \, (  [{X_1}] \otimes\cdots \otimes [{X_n}])= Z(S^3,T_{X_1, \dots, X_n})\in \kk.
$$
\end{lem}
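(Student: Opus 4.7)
The plan is to run the same decomposition argument used in Lemma~\ref{lem-TQFTsurg-G}, but with the bare solid tori $V_{\alpha_q}$ replaced by the colored solid tori $W^{X_q}_{\alpha_q}$ carrying the stabilized core knots. By construction of the $\bb$-colored $G$-graph $(S^3,T_{X_1,\dots,X_n})$, the 3-manifold $S^3$ is obtained by gluing the disjoint union $W^{X_1}_{\alpha_1} \sqcup \cdots \sqcup W^{X_n}_{\alpha_n}$ to $(E,\Omega)$ along the diffeomorphism $f_L$. Under this gluing, each stabilized core knot $K^s \subset W_{\alpha_q}$ becomes the stabilized link component $L_q^s$, and the union of all these with $\Omega$ is precisely the underlying ribbon graph of $T_{X_1,\dots,X_n}$. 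Hence in $\Cob^G_\bb$ the equality
\begin{equation*}
(S^3, T_{X_1,\dots,X_n})_+ \;=\; (E,\Omega)_- \circ \cyl(f_L) \circ \bigl((W^{X_1}_{\alpha_1})_+ \sqcup \cdots \sqcup (W^{X_n}_{\alpha_n})_+\bigr)
\end{equation*}
holds as morphisms $\emptyset \to \emptyset$.

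Next I would apply the graph HQFT $Z$ to this equality, precompose with $Z_0$ and postcompose with $Z_0^{-1}$, and evaluate at $1_\kk$. The left-hand side then becomes $Z(S^3, T_{X_1,\dots,X_n})$ by the definition recalled in Section~\ref{sect-free-ends}. On the right-hand side, the symmetric monoidality of $Z$ turns the disjoint union of cobordisms into the tensor product via the isomorphism $\xi_n$, while the definition of the $[X_q]$ in \eqref{eq-def-yX} identifies
\begin{equation*}
\xi_n \circ \bigl(Z_+(W^{X_1}_{\alpha_1}) \otimes \cdots \otimes Z_+(W^{X_n}_{\alpha_n})\bigr)(1_\kk) = \xi_n \bigl([X_1] \otimes \cdots \otimes [X_n]\bigr).
\end{equation*}
Composing with $Z_-(E,\Omega) \circ Z(f_L)$ gives exactly $Z^L([X_1]\otimes\cdots\otimes [X_n])$ by the definition of $Z^L$ in Section~\ref{sect-calc-ZLR}, which yields the claimed identity.

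The principal point to verify carefully is that the $\bb$-coloring of $(S^3, T_{X_1,\dots,X_n})$ obtained from the gluing of the colored $\bb$-colored $G$-graphs $W^{X_q}_{\alpha_q}$ to $(E,\Omega)$ along $f_L$ coincides with the one prescribed in the statement. Concretely, one must check that (i)~the 1- and 2-systems of tracks on the $W^{X_q}_{\alpha_q}$ combine with those on $(E,\Omega)$ to give compatible systems of tracks on $L^s \cup \Omega$, and (ii)~the colors assigned in the gluing construction of Section~\ref{sect-def-compo-cob-G} match the prescribed data: the core arc of $W_{\alpha_q}$ receives the object $X_q\in \bb_{\alpha_q}$ and each stabilization coupon receives~$\id_{\varphi_1(X_q)}$. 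This is where the expected mild obstacle lies, but it is forced by the defining property of $f_L$: a meridian of $L_q$ pulls back under $f_L$ to the loop representing $\alpha_q\in G$ in $\T_{\alpha_q}$, matching the grading of $X_q$, while the longitude of $L_q$ pulls back to the loop representing $1\in G$, matching the fact that $\mu_\gamma\mapsto \alpha_q$ for every track~$\gamma$ of $K^s$ in $W^{X_q}_{\alpha_q}$. With these verifications, the decomposition of morphisms in $\Cob^G_\bb$ above is valid in the $\bb$-colored category, and the rest of the argument is a direct application of the functoriality and monoidality of~$Z$.
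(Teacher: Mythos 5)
Your proof is correct and coincides with the paper's argument: both decompose $(S^3,T_{X_1,\dots,X_n})_+$ as $(E,\Omega)_- \circ \cyl(f_L) \circ \bigl((W^{X_1}_{\alpha_1})_+ \sqcup \cdots \sqcup (W^{X_n}_{\alpha_n})_+\bigr)$ and then apply the functoriality and monoidality of $Z$. The only redundancy is your concluding ``principal point to verify'': $(S^3,T_{X_1,\dots,X_n})$ is \emph{defined} in Section~\ref{sect-calc-ZLR} as the result of gluing $W^{X_1}_{\alpha_1}\sqcup\cdots\sqcup W^{X_n}_{\alpha_n}$ to $(E,\Omega)$ along $f_L$, so there is no independent prescription of its $\bb$-coloring against which to check the gluing; the remark that its underlying ribbon graph is $L^s\cup\Omega$ is merely descriptive, not a separate condition.
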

\begin{proof}
The monoidality of the functor~$Z$ implies that
$$
  \xi_n      \, (  [{X_1}] \otimes \cdots \otimes[{X_n}])= Z_+(W_{\alpha_1}^{X_1} \sqcup \cdots \sqcup W_{\alpha_n}^{X_n})(1_\kk).
$$
Then
\begin{gather*}
Z^L \, (  [{X_1}] \otimes \cdots \otimes  [{X_n}])= Z_-(E,\Omega) \, Z(f_L)\,  Z_+(W_{\alpha_1}^{X_1} \sqcup \cdots \sqcup W_{\alpha_n}^{X_n}) (1_\kk)  \\
=Z_0^{-1} Z((E,\Omega)_-) \, Z(f_L)\,  Z((W_{\alpha_1}^{X_1} \sqcup \cdots \sqcup W_{\alpha_n}^{X_n})_+)\,  Z_0(1_\kk)
= Z(S^3, T_{X_1, \dots, X_n} ),
\end{gather*}
where the last equality follows from the functoriality of~$Z$.
\end{proof}

\section{Proof of Theorem~\ref*{thm-comparison}}\label{sect-comparison}
In this section, we prove Theorem~\ref{thm-comparison}. Recall that $\kk$ is an algebraically closed field and  $\cc=\oplus_{\alpha\in G} \, \cc_\alpha$ is an additive spherical $G$-fusion category over $\kk$ such that $\dim(\cc_1) \neq 0$. By Section~\ref{sect-center-graded-cat}, $\zz_G(\cc)$ is an additive anomaly free $G$-modular category whose canonical rank is equal to $\dim(\cc_1)$.
Recall that $I=\amalg_{\alpha\in G}\, I_\alpha$  denotes  a   representative set of simple objects of~$\cc$. Let~$\mathcal{J}=\amalg_{\alpha\in G}\, \mathcal{J}_\alpha$  be  a   representative set of simple objects of~$\zz_G(\cc)$.

The surgery graph HQFT $\tau_{\zz_G(\cc)}$ and the state sum graph HQFT $|\cdot|_\cc$ are (symmetric) monoidal functors $\Cob^G_{\zz_G(\cc)} \to \Mod_\kk$. Now the category $\Cob^G_{\zz_G(\cc)}$ is left rigid  (see Section~\ref{sect-pptes-cob-G}) and $\tau_{\zz_G(\cc)}$ is non-degenerate (see Section~\ref{sect-surg-HQFT}). Thus, by \cite[Lemma 17.2]{TVi5}, we only need to prove that:
\begin{enumerate}
\labela
\item for any $\zz_G(\cc)$-colored   $G$-surface $\Sigma$,
    $$|\Sigma|_\cc \simeq \tau_{\zz_G(\cc)}(\Sigma);$$
\item for any $\zz_G(\cc)$-colored $G$-graph  $\Omega$  in a closed oriented 3-manifold~$M$,
$$
\vert M,\Omega \vert_\cc=\tau_{\zz_G(\cc)}(M,\Omega).
$$
\end{enumerate}

\subsection{Proof of (a)}\label{sect-proof-a}
For $\alpha,\beta \in G$, recall the object $\widetilde{C}_{\alpha,\beta}\in\zz_G(\cc)$ from Section~\ref{sect-graph-stsum-hqft-construct}. By  \cite[Theorem A1]{TVi3}, this object is the coend
$$
\widetilde{C}_{\alpha,\beta}=\int^{X \in \zz_\beta(\cc)} (\varphi_\alpha(X))^* \otimes X.
$$
Now, since $\zz_\beta(\cc)$ is additive and finitely semisimple with $\mathcal{J}_\beta$ as a representative  set of simple objects, we have:
$$
\int^{X \in \zz_\beta(\cc)} (\varphi_\alpha(X))^* \otimes X \, =  \!\! \bigoplus_{J \in \mathcal{J}_\beta} (\varphi_\alpha(J))^* \otimes J.
$$
Consequently, the uniqueness of a coend implies that
\begin{equation}\label{eq-calc-coend}
\widetilde{C}_{\alpha,\beta} \, \simeq  \!\! \bigoplus_{J \in \mathcal{J}_\beta} (\varphi_\alpha(J))^* \otimes J.
\end{equation}

Let $\Sigma$ be a connected $\zz_G(\cc)$-colored $G$-surface of genus $g\geq 0$.
The surface~$\Sigma$ carries a base point~$\ast$, a finite set of marked points  $A=\{a_1, \dots,a_m\}$,  and a homotopy class of  maps
$
(\Sigma \setminus A, \ast)\to (\X,\x).
$
Pick a track $\gamma_i$ of $a_i$ for each $i \in \{1, \dots m \}$. Recall from Section~\ref{sect-marked-surfaces} the homotopy class $\mu_{\gamma_i} \in \pi_1(\Sigma \setminus A,\ast)$ of the loop encircling $a_i$.
The group $\pi_1(\Sigma \setminus A,\ast)$ is generated by $\mu_{\gamma_1}, \dots, \mu_{\gamma_m}$ and $2g$ elements $\alpha_1,\beta_1, \dots, \alpha_g,\beta_g$ subject to the only relation
\begin{equation*}
[\alpha_1,\beta_1] \cdots [\alpha_g,\beta_g](\mu_{\gamma_1})^{\varepsilon_1} \cdots (\mu_{\gamma_m})^{\varepsilon_m}=1,
\end{equation*}
where $[\alpha,\beta]=\alpha^{-1}\beta^{-1}\alpha\beta$ and $\varepsilon_i=\pm 1$ is the sign of $a_i$. Consider the object
$U_\Sigma \in \zz_G(\cc)$ defined as in Section~\ref{sect-surg-HQFT}. By \eqref{eq-compare-2}, we have
$$
\vert \Sigma \vert_\cc \simeq \Hom_{\zz_G(\cc)}\Bigl(\un_{\zz_G(\cc)},\widetilde{C}_{\alpha_1,\beta_1} \otimes \cdots \otimes \widetilde{C}_{\alpha_g,\beta_g} \otimes   U_\Sigma  \Bigr).
$$
Now \eqref{eq-calc-coend} implies that
$$
\widetilde{C}_{\alpha_1,\beta_1} \otimes \cdots \otimes \widetilde{C}_{\alpha_g,\beta_g} \, \simeq
\!\!\!\!\!\!\!\!\!\!
\bigoplus_{J_1\in \mathcal{I}_{\beta_1}, \ldots,J_g \in \mathcal{I}_{\beta_g}}
\!\!\!\!\!\!\!\!\!\!
\bigl(\varphi_{\alpha_1}(J_1)^* \otimes J_1 \bigr) \otimes \cdots \otimes \bigl(\varphi_{\alpha_g}(J_g)^* \otimes J_g \bigr).
$$
The last two formulas together with the additivity of $\zz_G(\cc)$ imply that  $\vert \Sigma \vert_\cc$ is isomorphic to
$$
\bigoplus_{J_1\in \mathcal{I}_{\beta_1}, \ldots,J_g \in \mathcal{I}_{\beta_g}}
\!\!\!\!\!\!\!\!\!\!
\Hom_{\zz_G(\cc)}\Bigl(\un_{\zz_G(\cc)},\bigl(\varphi_{\alpha_1}(J_1)^* \otimes J_1 \bigr) \otimes \cdots \otimes \bigl(\varphi_{\alpha_g}(J_g)^* \otimes J_g \bigr) \otimes   U_\Sigma  \Bigr).
$$
Now \eqref{eq-def-tau-B-Sigma} applied with $\bb=\zz_G(\cc)$ implies that the latter vector space is isomorphic to~$\tau_{\zz_G(\cc)}(  \Sigma  )$.
Consequently $\vert \Sigma \vert_\cc$ is isomorphic to  $\tau_{\zz_G(\cc)}(  \Sigma  )$.

The case of disconnected~$\Sigma$ is deduced from the case of  connected~$\Sigma$ using Formula~\eqref{eq-cas-non-connexe} applied to the graph HQFTs  $  \vert \cdot \vert_\cc$ and $ \tau_{\zz_G(\cc)}$.

\subsection{Proof of (b)}\label{sect-proof-b}
Let $\alpha\in G$. Recall that any object $X \in \zz_\alpha(\cc)$ determines a vector  $[X] \in
\vert \T_\alpha \vert_\cc  $    as in \eqref{eq-def-yX} with $\bb=\zz_G(\cc)$. Proceeding as in \cite[Section 17.4]{TVi5},
we obtain that the family  $ \{[J]\}_{J \in {\mathcal J}_\alpha}$ is  a basis of~$\vert \T_\alpha \vert_\cc$ and that the
 $\alpha$-torus vector  $w_\alpha \in \vert \T_\alpha \vert_\cc$  of $| \cdot |_\cc$
is  computed in this basis by
\begin{equation}\label{expansionofthetorusvector}
w_\alpha= (\dim(\cc_1))^{-1}  \sum_{J\in \mathcal{J}_\alpha} \dim(J) \,[J].
\end{equation}

Pick a $\zz_G(\cc)$-colored $G$-graph $\Omega$ in a closed oriented 3-manifold~$M$.
Since $\vert M,\Omega \vert_\cc$ and $\tau_{\zz_G(\cc)}(M,\Omega)$ are multiplicative under disjoint union, it suffices to consider the case where $M$ is connected. Present $M$ by surgery on $S^3$ along a framed oriented  link $L=L_1\cup \cdots \cup L_n \subset S^3$. We use the notation of Section~\ref{sec-The-surgery-formula} with $Z=|\cdot|_\cc$. By Lemma~\ref{lem-TQFTsurg-G} and Formula~\eqref{expansionofthetorusvector},
\begin{gather*}
\vert M , \Omega\vert_\cc = Z^L\, (w_{\alpha_1} \otimes \cdots  \otimes w_{\alpha_n}) \\
= \sum_{J_1\in \mathcal{J}_{\alpha_1}, \dots , J_n \in \mathcal{J}_{\alpha_n}} \left(\prod_{q=1}^n \frac{\dim(J_q)}{\dim(\cc_1)}  \right)   Z^L ([J_1] \otimes  \cdots \otimes [J_n]).
\end{gather*}
Using the notation of Section~\ref{sect-calc-ZLR}, it follows from
Lemma~\ref{lem-formul-HUL} and Formula~\eqref{eq-compare-1}   that for any $J_1\in \mathcal{J}_{\alpha_1}, \dots , J_n \in \mathcal{J}_{\alpha_n}$,
\begin{gather*}
Z^L ([J_1] \otimes  \cdots \otimes [J_n]))= \vert S^3,T_{J_1, \dots, J_n}\vert_\cc=(\dim (\cc_1))^{-1}  F_{\zz_G(\cc)}(T_{J_1, \dots, J_n}).
\end{gather*}
Therefore
\begin{gather*}
\vert M , \Omega\vert_\cc
= \sum_{J_1\in \mathcal{J}_{\alpha_1}, \dots , J_n \in \mathcal{J}_{\alpha_n}} \left(\prod_{q=1}^n \frac{\dim(J_q)}{\dim(\cc_1)}  \right)   (\dim (\cc_1))^{-1}  F_{\zz_G(\cc)}(T_{J_1, \dots, J_n}) \\
= (\dim(\cc_1))^{-n-1} \!\!\!\!\!\!\!\!\sum_{J_1\in \mathcal{J}_{\alpha_1}, \dots , J_n \in \mathcal{J}_{\alpha_n}} \left(\prod_{q=1}^n \dim(J_q)  \right) F_{\zz_G(\cc)}(T_{J_1, \dots, J_n})=\tau_{\zz_G(\cc)}(M,\Omega),
\end{gather*}
where the last equality is the definition of $\tau_{\zz_G(\cc)}(M,\Omega)$,  see Formula~\eqref{eq-def-tau-B-M-Omega}.

\appendix

\section{The crossing and braiding of the graded center}\label{sect-appendix}

In this appendix, we summarize the construction of the canonical crossing and braiding of the $G$-center of a $G$-graded category,  referring to \cite{TVi3} for details. We formulate these constructions for the class of non-singular $G$-graded categories over a commutative ring which includes the class of $G$-fusion categories over a field.

\subsection{Non-singular graded categories}\label{sect-nonsing-graded-cat}
A monoidal category is   \emph{pure} if $f \otimes \id_X=\id_X \otimes f$  for all object $X$ and all endomorphism $f$ of the monoidal unit $\un$.
In a pure pivotal category, the  left and right traces of endormorphisms are $\otimes$-multiplicative.
By \cite[Remarks 4.2.2]{TVi5}, a \kt linear monoidal category with simple monoidal unit is pure. Note also that the $G$-center of a pure $G$-graded category is pure.

An \emph{idempotent} in a category is an endomorphism $e$ of an object such that $e^2=e$. An
idempotent $e\co X \to X$ \emph{splits} if there is an object~$E$ and   morphisms $p\colon X \to E$ and
$q\colon E \to X$ such that $qp=e$ and $pq=\id_E$. Note that such a {\it splitting triple} $(E,p,q)$ of $e$ is   unique up to   isomorphism.
A \emph{category with split idempotents} is a category in which
all idempotents split.

Following \cite{TVi3}, a $G$-graded category    $\cc$ is
\emph{non-singular} if it is pure, has split idempotents, and for all $\alpha \in G$, the subcategory
$\cc_\alpha$ of $\cc$ has  at least  one object whose left dimension is
invertible in $\End_\cc(\un)$.

Any $G$-fusion category $\cc$ over $\kk$ is pure (since its monoidal unit $\un$ is simple) and both the left and right dimensions of   simple objects of  $\cc$ are invertible in $\End_\cc(\un)=\kk$ (see \cite[Section 4.4.2]{TVi5}). Also, if~$\kk$ is a field, then $\cc$ has   split idempotents. Therefore   any  $G$-fusion category over a field   is non-singular.

\subsection{Notation}
We fix until the end of the appendix a non-singular $G$-graded category $\cc$ over $\kk$. Denote by $\ee$  the class of homogeneous objects of $\cc$ with invertible left dimension. This class decomposes as
$\ee=\amalg_{\alpha \in G}\, \ee_\alpha$, where $\ee_\alpha=\ee\cap \cc_\alpha$. (Note that $\ee_\alpha \neq \emptyset$ since $\cc$ is non-singular.)
For $V\in \ee$, we set $$d_V=\dim_l(V)\in \End_\cc (\un).$$

 By \cite[Theorem 4.1]{TVi3}, the $G$-center $\zz_G(\cc)$ of $\cc$ has a canonical structure of a $G$-braided  category. In the next sections, we
recall the construction of the crossing and of the (enhanced) $G$-braiding of $\zz_G(\cc)$. We also compute the twist of $\zz_G(\cc)$.

\subsection{The crossing}
The crossing in $\zz_G(\cc)$ is constructed  in three steps. At Step~1, we associate a monoidal endofunctor of $\zz_G(\cc)$ to each homogeneous object of $\cc$ with invertible left dimension. At Step 2, we construct a system of   isomorphisms between these endofunctors. At Step 3, we define the crossing as the limit of the resulting projective system of endofunctors and isomorphisms.

\emph{Step 1.} For any $V \in \ee$, we define a  monoidal endofunctor $\varphi_V$ of $\zz_G(\cc)$ as follows.  For  any $(A,\sigma) \in \zz_G(\cc)$, the morphism
$$
\pi^V_{(A,\sigma)}= d_V^{-1}\; \psfrag{A}[Bc][Bc]{\scalebox{.9}{$A$}}
 \psfrag{V}[Bc][Bc]{\scalebox{.9}{$V$}}
 \psfrag{s}[Bc][Bc]{\scalebox{.9}{$\sigma_{V \otimes V^*}$}}
 \rsdraw{.45}{1}{pMsigma} \in \End_\cc(V^* \otimes A \otimes V)
$$
is an idempotent. Since  all  idempotents in $\cc$   split,  there exist an
object $E^V_{(A,\sigma)} \in \cc$ and morphisms $p^V_{(A,\sigma)}\co
V^* \otimes A \otimes V \to E^V_{(A,\sigma)}$ and
$q^V_{(A,\sigma)}\co E^V_{(A,\sigma)} \to V^* \otimes A \otimes V$
such that
$$
\pi^V_{(A,\sigma)}=q^V_{(A,\sigma)}p^V_{(A,\sigma)}\quad \text{and} \quad p^V_{(A,\sigma)}q^V_{(A,\sigma)}=\id_{E^V_{(A,\sigma)}}.
$$
When $A$ is homogeneous, we can and always   choose $E^V_{(A, \sigma)}$  to be homogeneous (of degree   $|V|^{-1} |A| \, |V|$).
We will depict the morphisms $p^V_{(A,\sigma)}$ and $q^V_{(A,\sigma)}$ as
$$
 \psfrag{A}[Bl][Bl]{\scalebox{.8}{$A$}}
 \psfrag{V}[Bl][Bl]{\scalebox{.8}{$V$}}
 \psfrag{E}[Bl][Bl]{\scalebox{.8}{$E_{(A,\sigma)}^V$}}
 p^V_{(A,\sigma)}=\rsdraw{.45}{1}{pVAs}\quad \text{and} \quad  q^V_{(A,\sigma)}=\rsdraw{.45}{1}{qVAs} \;.
$$
The family $\gamma^V_{(A,\sigma)}=\{\gamma^V_{(A,\sigma),X}\}_{X \in \cc_1}$, where
$$
\gamma^V_{(A,\sigma),X}= d_V^{-1}\; \psfrag{A}[Bc][Bc]{\scalebox{.8}{$A$}}
 \psfrag{V}[Bc][Bc]{\scalebox{.8}{$V$}}
  \psfrag{X}[Bc][Bc]{\scalebox{.8}{$X$}}
   \psfrag{E}[Bl][Bl]{\scalebox{.8}{$E^V_{(A,\sigma)}$}}
 \psfrag{s}[Bc][Bc]{\scalebox{.9}{$\sigma_{V \otimes X \otimes V^*}$}}
 \rsdraw{.45}{1}{gammaMsigma} : E^V_{(A,\sigma)} \otimes X\to X\otimes E^V_{(A,\sigma)},
$$
is a  half-braiding of $\cc$ relative to $\cc_1$. Set
$$
\varphi_V(A,\sigma)=(E^V_{(A,\sigma)},\gamma^V_{(A,\sigma)}) \in \zz_G(\cc).
$$
For   a  morphism $f\co (A,\sigma) \to (B,\rho)$ in $\zz_G(\cc)$, set
$$\varphi_V(f)=
 \psfrag{V}[Bc][Bc]{\scalebox{.8}{$V$}}
 \psfrag{A}[Bc][Bc]{\scalebox{.8}{$A$}}
 \psfrag{B}[Bc][Bc]{\scalebox{.8}{$B$}}
 \psfrag{E}[Bl][Bl]{\scalebox{.8}{$E^V_{(A,\sigma)}$}}
 \psfrag{F}[Bl][Bl]{\scalebox{.8}{$E^V_{(B,\rho)}$}}
 \psfrag{f}[Bc][Bc]{\scalebox{.9}{$f$}}
 \rsdraw{.45}{1}{phiV-morph} \; \co \varphi_V(A,\sigma) \to \varphi_V(B,\rho).
$$
The monoidal constraints of $\varphi_V$ are defined for any $(A,\sigma),(B,\rho)\in  \zz_G(\cc)$ by
$$(\varphi_V)_2\bigl((A,\sigma),(B,\rho)\bigr)=  \;\psfrag{A}[Bc][Bc]{\scalebox{.8}{$A$}}
 \psfrag{V}[Bc][Bc]{\scalebox{.8}{$V$}}
 \psfrag{B}[Bc][Bc]{\scalebox{.8}{$B$}}
 \psfrag{E}[Bl][Bl]{\scalebox{.8}{$E_{(A,\sigma)}^V$}}
 \psfrag{F}[Bl][Bl]{\scalebox{.8}{$E_{(B,\rho)}^V$}}
 \psfrag{G}[Bl][Bl]{\scalebox{.8}{$E_{(A,\sigma)\otimes (B,\rho)}^V$}}
 \rsdraw{.45}{1}{phiV2Msigma} \quad \text{and} \quad
 (\varphi_V)_0= \;
 \psfrag{V}[Bc][Bc]{\scalebox{.8}{$V$}}
 \psfrag{F}[Bl][Bl]{\scalebox{.8}{$E^V_{(\un,\id)}$}}
 \rsdraw{.45}{1}{phiV-0}\;.
$$
Then $(\varphi_V,(\varphi_V)_2,(\varphi_V)_0)$ is a pivotal strong  monoidal $\kk$-linear  endofunctor of $\zz_G(\cc)$ such that
$\varphi_V\bigl(\zz_{\beta}(\cc)\bigr) \subset \zz_{|V|^{-1} \beta \, |V|}(\cc)$ for all $\beta \in G$.

The endofunctors $\{ \varphi_V\}_{V \in \ee}$ are related as follows. First, pick any    $U \in \ee_\alpha$, $V \in \ee_\beta$,   $W \in \ee_{\beta \alpha}$ with $\alpha,\beta \in G$. For any $(A,\sigma)\in \zz_G(\cc)$, set
$$
\zeta^{U,V,W}_{(A,\sigma)}=  d_U^{-1}d_V^{-1}\;\psfrag{A}[Bl][Bl]{\scalebox{.7}{$A$}}
 \psfrag{V}[Bl][Bl]{\scalebox{.7}{$U$}}
  \psfrag{R}[Bl][Bl]{\scalebox{.7}{$V$}}
    \psfrag{T}[Bl][Bl]{\scalebox{.7}{$W$}}
   \psfrag{E}[Bl][Bl]{\scalebox{.8}{$E_{\varphi_V(A,\sigma)}^U$}}
      \psfrag{F}[Bl][Bl]{\scalebox{.7}{$E_{(A,\sigma)}^V$}}
         \psfrag{G}[Bl][Bl]{\scalebox{.8}{$E_{(A,\sigma)}^W$}}
 \psfrag{t}[Bc][Bc]{\scalebox{.8}{$p_{(A,\sigma)}^W$}}
  \psfrag{s}[Bc][Bc]{\scalebox{.9}{$\sigma_{V \otimes U \otimes W^*}$}}
 \psfrag{u}[Bc][Bc]{\scalebox{.8}{$q_{(A,\sigma)}^V$}}
 \psfrag{e}[Bc][Bc]{\scalebox{.8}{$q_{\varphi_V(A,\sigma)}^U$}}
 \rsdraw{.45}{1}{phi2Msigma} \colon \varphi_U \varphi_V(A,\sigma)  \to \varphi_W (A,\sigma).
$$
Then the family  $\zeta^{U,V,W}=\{\zeta^{U,V,W}_{(A,\sigma)}\}_{(A,\sigma)\in
\zz_G(\cc)}$ is a monoidal natural isomorphism from $\varphi_U\varphi_V$ to $\varphi_W$. Second,
pick any $U \in \ee_1$. For any $(A,\sigma)\in \zz_G(\cc)$, set
$$
\eta^U_{(A,\sigma)}=  \;\psfrag{A}[Bl][Bl]{\scalebox{.8}{$A$}}
 \psfrag{V}[Bl][Bl]{\scalebox{.8}{$U$}}
   \psfrag{E}[Bl][Bl]{\scalebox{.8}{$E_{(A,\sigma)}^U$}}
  \psfrag{s}[Bc][Bc]{\scalebox{.9}{$\sigma_{U}$}}
 \rsdraw{.45}{1}{phi0Msigma} \; \colon (A,\sigma)  \to \varphi_U (A,\sigma).
$$
Then the family $\eta^U=\{\eta^U_{(A,\sigma)}\}_{(A,\sigma)\in \zz_G(\cc)}$ is a monoidal natural isomorphism from the identity endofunctor $1_{\zz_G(\cc)}$ to $\varphi_U$.\\

\emph{Step 2.} For   $U, V \in \ee_\alpha$ with $\alpha\in G$, the family $\delta^{U,V}=\{\delta^{U,V}_{(A,\sigma)}\}_{(A,\sigma)\in \zz_G(\cc)}$, where
$$
\delta^{U,V}_{(A,\sigma)}=d_V^{-1}\;
 \psfrag{A}[Bc][Bc]{\scalebox{.8}{$A$}}
 \psfrag{U}[Bc][Bc]{\scalebox{.8}{$V$}}
 \psfrag{V}[Bc][Bc]{\scalebox{.8}{$U$}}
 \psfrag{s}[Bc][Bc]{\scalebox{.9}{$\sigma_{V \otimes U^*}$}}
 \psfrag{E}[Bl][Bl]{\scalebox{.8}{$E_{(A,\sigma)}^V$}}
 \psfrag{F}[Bl][Bl]{\scalebox{.8}{$E_{(A,\sigma)}^U$}}
 \rsdraw{.45}{1}{pMsigmaUV} \; \co \varphi_V(A,\sigma) \to \varphi_U(A,\sigma),
$$
is a monoidal natural isomorphism from $\varphi_U$ to~$\varphi_V$. These isomorphisms are related as follows: for any $U,V,W \in \ee_\alpha$,
$$
\delta^{U,V}\delta^{V,W}=\delta^{U,W} \quad \text{and} \quad \delta^{U,U}=\id_{\varphi_U}.
$$

\emph{Step 3.} For $\alpha \in G$, the family  $(\varphi_V, \delta^{U,V})_{U,V \in \ee_\alpha}$ is a
projective system in the category of pivotal strong monoidal $\kk$-linear endofunctors of $\zz_G(\cc)$.
Since all  $\delta^{U,V}$'s are  isomorphisms, this system has a well-defined   projective limit
$$
\varphi_\alpha= \varprojlim (\varphi_V, \delta^{U,V})_{U,V \in \ee_\alpha}
$$
which is  a pivotal strong  monoidal $\kk$-linear endofunctor  of $\zz_G(\cc)$. We can assume that  $\varphi_\alpha\bigl(\zz_\beta(\cc)\bigr) \subset \zz_{\alpha^{-1}\beta\alpha}(\cc)$ for all $ \beta \in G$.
Denote by $\iota^\alpha=\{\iota^\alpha_V\}_{V \in \ee_\alpha}$  the
 universal cone associated with the projective limit above: for $V \in \ee_\alpha$,
$$
\iota^\alpha_V=\{(\iota^\alpha_V)_{(A,\sigma)} \co
\varphi_\alpha(A,\sigma) \to \varphi_V(A,\sigma)\}_{(A,\sigma)\in
\zz_G(\cc)}
$$
is a monoidal natural isomorphism from $\varphi_\alpha$ to $\varphi_V$.

The transformations $\zeta$ and $\eta$ from Step 1 induce monoidal natural isomorphisms $\varphi_2(\alpha,\beta)\co
\varphi_\alpha\varphi_\beta \to \varphi_{\beta\alpha}$   and
$\varphi_0\co 1_{\zz_G(\cc)} \to \varphi_1$, respectively. These
isomorphisms are related to the universal cone as
follows: for $U \in \ee_\alpha$, $V \in \ee_\beta$, $W \in
\ee_{\beta\alpha}$, and $R \in \ee_1$, the following diagrams commute:
$$
    \xymatrix@R=1cm @C=2.5cm {
\varphi_\alpha\varphi_\beta \ar[r]^-{\varphi_2(\alpha,\beta)} \ar[d]_{\varphi_U(\iota^\beta_V)(\iota^\alpha_U)_{\varphi_\beta}} & \varphi_{\beta\alpha}
\ar[d]^{\iota^{\beta\alpha}_W} \\
\varphi_U \varphi_V \ar[r]_-{\zeta^{U,V,W}} & \varphi_W}
\qquad \qquad
\xymatrix@R=1cm @C=.5cm {
& 1_{\zz_G(\cc)} \ar[ld]_-{\varphi_0} \ar[dr]^{\eta^R}   \\
\varphi_1 \ar[rr]_-{\iota^1_R} && \varphi_R.}
$$
Note that $\varphi_2$ and
$\varphi_0$ induce natural isomorphisms  $\varphi_\alpha
\varphi_{\alpha^{-1}} \simeq \varphi_1 \simeq 1_{\zz_G(\cc)}$ and
$\varphi_{\alpha^{-1}} \varphi_\alpha \simeq \varphi_1 \simeq
1_{\zz_G(\cc)}$ for $\alpha \in G$.  Hence, the endofunctor
$\varphi_\alpha$ of $\zz_G(\cc)$ is an equivalence. Therefore
$$\varphi=(\varphi,\varphi_2,\varphi_0) \co \overline{G} \to
\Aut\bigl(\zz_G(\cc)\bigr), \,\, \alpha \mapsto
\varphi_\alpha$$ is a strong monoidal functor such that
$\varphi_\alpha\bigl(\zz_\beta(\cc)\bigr) \subset
\zz_{\alpha^{-1}\beta\alpha}(\cc)$ for all $\alpha,\beta \in G$.
This is the crossing of $\zz_G(\cc)$.

\subsection{The enhanced $G$-braiding}\label{sect-appendix-enhanced-G-braid}
For $V \in \ee$, $(A,\sigma) \in \zz_G(\cc)$,  and $X \in \cch$, set
$$
\Gamma^{V}_{(A,\sigma),X}=  \;\psfrag{A}[Bl][Bl]{\scalebox{.8}{$A$}}
\psfrag{B}[Bl][Bl]{\scalebox{.8}{$X$}}
 \psfrag{V}[Bl][Bl]{\scalebox{.8}{$V$}}
   \psfrag{E}[Bl][Bl]{\scalebox{.8}{$E_{(A,\sigma)}^V$}}
  \psfrag{s}[Bc][Bc]{\scalebox{.9}{$\sigma_{X \otimes V^*}$}}
 \rsdraw{.45}{1}{tauMsigma}\; \colon A \otimes X  \to X \otimes E_{(A,\sigma)}^V.
$$
This is an isomorphism which is natural in $(A,\sigma) $ and in $X$. Also, for any $U,V \in \ee$ with $|U|=|V|$,  $(A,\sigma) \in \zz_G(\cc)$,  and $X \in \cch$, the following diagram commutes:
$$    \xymatrix@R=1cm @C=.5cm {
& A \otimes X \ar[ld]_-{\Gamma^{V}_{(A,\sigma),X}} \ar[dr]^{\Gamma^{U}_{(A,\sigma),X}}   \\
X \otimes E_{(A,\sigma)}^V \ar[rr]_-{\id_X \otimes \delta^{U,V}} && X \otimes E_{(A,\sigma)}^U.}
$$
Then the transformation $\Gamma$ induces a family of  isomorphisms
$$
 \tau=\{\tau_{(A,\sigma),X} \co A \otimes X \to X \otimes \uu\bigl(\varphi_{|X|}(A,\sigma) \bigr)\}_{(A,\sigma)
  \in \zz_G(\cc), X \in \cch}
$$
which is  natural in $(A,\sigma)  $ and in $X$, where $\uu \co \zz_G(\cc) \to \cc$ is the forgetful functor. The family $\tau$ is related to the universal cones $\{\iota^\alpha\}_{\alpha\in G}$  associated  with $\varphi$ as follows: for any $(A,\sigma) \in \zz_G(\cc)$, $X \in \cch$, and $V \in \ee_{|X|}$,
$$
\bigl(\id_X \otimes (\iota^{|X|}_V)_{(A,\sigma)} \bigr) \tau_{(A,\sigma),X}=\Gamma^V_{(A,\sigma),X}.
$$
We call the family $\tau$ the \emph{enhanced $G$-braiding} in $\zz_G(\cc)$. The enhanced $G$-braiding satisfies properties generalizing that of a $G$-braiding. In particular, it is distributive in each variable with respect to the monoidal product: for all $(A,\sigma), (B,\rho)  \in \zz_G(\cc)$ and all $X,Y \in \cch$,
$$
\tau_{(A,\sigma), X \otimes Y} =(\id_{X \otimes Y} \otimes \varphi_2(|Y|,|X|)_{(A,\sigma)})(\id_X \otimes \tau_{\varphi_{|X|}(A,\sigma),Y})(\tau_{(A,\sigma),X} \otimes \id_Y)
$$
and
\begin{align*}
\tau_{(A,\sigma) \otimes (B,\rho), X} = &
\bigl(\id_X \otimes (\varphi_{|X|})_2((A,\sigma),(B,\rho))
\bigr)\bigl(\tau_{(A,\sigma),X} \otimes
\id_{\varphi_{|X|}(B,\rho)}\bigr) \circ\\
     &\quad \circ \bigl(\id_A \otimes \tau_{(B,\rho),X}\bigr).
\end{align*}

\subsection{The $G$-braiding} The $G$-braiding in $\zz_G(\cc)$ is induced from the enhanced $G$-braiding. More precisely,
for all $(A,\sigma) \in \zz_G(\cc)$ and $(B,\rho) \in \zz_G(\cc)_{\mathrm{hom}}$,
$$
\tau_{(A,\sigma),(B,\rho)}=\tau_{(A,\sigma),B} \co (A,\sigma) \otimes (B,\rho) \to
  (B,\rho) \otimes \varphi_{|(B,\rho)|}(A,\sigma).
$$
is a morphism in $\zz_G(\cc)$. Then the family
$$
\{\tau_{(A,\sigma),(B,\rho)} \}_{(A,\sigma) \in \zz_G(\cc), (B,\rho) \in \zz_G(\cc)_{\mathrm{hom}}}
$$
is a $G$-braiding in  $\zz_G(\cc)$. In particular, $\zz_G(\cc)$ is a $G$-braided category.

\subsection{Ribboness of the $G$-center}
The twist $\theta$ of $\zz_G(\cc)$ is computed as follows: if $(A,\sigma) \in \zz_\alpha(\cc)$ with $\alpha \in G$,
then  for any $U \in \ee_\alpha$,
$$
\theta_{(A,\sigma)}=  \;
 \psfrag{A}[Bl][Bl]{\scalebox{.8}{$A$}}
 \psfrag{B}[Br][Br]{\scalebox{.8}{$A$}}
 \psfrag{R}[Bl][Bl]{\scalebox{.8}{$\varphi_{\alpha}(A,\sigma)$}}
 \psfrag{u}[Bc][Bc]{\scalebox{.8}{$(\iota^\alpha_U)^{-1}_{(A,\sigma)}$}}
 \psfrag{V}[Bl][Bl]{\scalebox{.8}{$U$}}
 \psfrag{E}[Bl][Bl]{\scalebox{.8}{$E_{(A,\sigma)}^U$}}
 \psfrag{t}[Bc][Bc]{\scalebox{.8}{$p_{(A,\sigma)}^U$}}
 \psfrag{s}[Bc][Bc]{\scalebox{1}{$\sigma_{U \otimes U^*}$}}
 \rsdraw{.45}{1}{thetaMsigma-Z}\, : (A,\sigma)\to \varphi_{\alpha}(A,\sigma).
$$
Recall that $\zz_G(\cc)$ is  $G$-ribbon if the twist $\theta$  is self-dual (see Section~\ref{sect-ribbon-graded-cat}). By \cite[Lemma 6.1]{TVi3},
a necessary and sufficient  condition for $\zz_G(\cc)$ to be $G$-ribbon is that
\begin{center}
\psfrag{M}[Br][Br]{\scalebox{.8}{$A$}}
\psfrag{s}[Bc][Bc]{\scalebox{1}{$\sigma_{A \otimes U^*}$}}
\psfrag{A}[Bl][Bl]{\scalebox{.8}{$A$}}
\psfrag{V}[Bl][Bl]{\scalebox{.8}{$U$}}
\rsdraw{.45}{.9}{ZG-rib4} \; $=$ \;
\psfrag{s}[Bc][Bc]{\scalebox{1}{$\sigma_{U^* \otimes A}$}}
\rsdraw{.45}{.9}{ZG-rib3}
\end{center}
for all $\alpha \in G$, $(A,\sigma) \in \zz_\alpha(\cc)$, and $U \in \ee_\alpha$.

\end{document}